\theoremstyle{plain}
\newtheorem*{lemma*}{Lemma}
\newtheorem{lemma}[subsection]{Lemma}
\newtheorem*{theorem*}{Theorem}
\newtheorem{theorem}[subsection]{Theorem}
\newtheorem*{proposition*}{Proposition}
\newtheorem{proposition}[subsection]{Proposition}
\newtheorem*{corollary*}{Corollary}
\newtheorem{corollary}[subsection]{Corollary}
\theoremstyle{definition}
\newtheorem*{definition*}{Definition}
\newtheorem{definition}[subsection]{Definition}
\newtheorem*{example*}{Example}
\newtheorem{example}[subsection]{Example}
\theoremstyle{remark}
\newtheorem*{remark*}{Remark}
\newtheorem{remark}[subsection]{Remark}
\newcommand{\R}{{\mathbb R}}
\newcommand{\CV}{{\mathcal{V}}}
\newcommand{\C}{{\mathbb C}}
\newcommand{\Z}{{\mathbb Z}}
\renewcommand{\ll}{{\langle}}
\newcommand{\rr}{{\rangle}}
 \newcommand{\y}{{{\bf y}}}
 \newcommand{\p}{{{\bf p}}}
\newcommand{\CH}{{\mathcal{H}}}
\newcommand{\CB}{{\mathcal{B}}}
\newcommand{\CT}{{\mathcal{T}}}
\newcommand{\CS}{{\mathcal{S}}}
 \renewcommand{\c}{{\mathfrak{c}}}
\newcommand{\s}{{\bf{s}}}
\renewcommand{\r}{{\bf{r}}}
\title[Poisson summation and box spline]{
 Poisson summation  formula  and Box splines}
\author{ Mich{\`e}le Vergne}
\address{Universit\'e Denis-Diderot-Paris 7, Institut de Math\'ematiques de Jussieu,
 C.P.~7012\\ 2~place Jussieu,   F-75251 Paris~cedex~05}
\email{vergne@math.jussieu.fr}
\begin{document}

\maketitle
 \tableofcontents

\section{Introduction}

Let $V$ be a  finite dimensional real vector space, of dimension $d$,  equipped with a lattice $\Lambda$. Let $U$ be the dual vector space of $V$, and $\Gamma\subset U$ the dual lattice of $\Lambda$.
For $v\in V$, denote by $\partial_v$ the differentiation in the direction $v$. We denote by $\delta_\lambda$ the Dirac measure at the point $\lambda\in V$.

Let $\Phi:=[\alpha_1,\alpha_2,\ldots, \alpha_N]$ be a list of elements in $\Lambda$.
The  box spline $B(\Phi)$ is the measure on $V$ such that, for a  continuous function  $F$ on $V$,
$$
\langle B(\Phi),F\rangle=
\int_{0}^{1}\cdots \int_{0}^{1}
F(\sum_{k=1}^{N} t_k\alpha_k) dt_1\cdots dt_N.
$$

The support of $B(\Phi)$ is the zonotope $Z(\Phi):=
\{\sum_{k=1}^{N} t_k\alpha_k; 0\leq t_k\leq 1\}$.
If $\Phi$ generates $V$, then the measure $B(\Phi)$ is given by integration against a  piecewise polynomial function, that we  denote by $b(\Phi)$.

The Fourier transform  ${\hat B}(\Phi)(x)$  is the  analytic function of $x\in U$:
$${\hat B}(\Phi)(x)=\prod_{\alpha\in \Phi}\frac{e^{i\langle \alpha,x\rangle}-1}{i\langle \alpha,x\rangle}.$$
Remark that the inverse of the Box spline is related to the generating function for Todd classes. We thus denote it by
$$Todd(\Phi,x)=\prod_{\alpha\in \Phi}\frac{i\langle \alpha,x\rangle}{e^{i\langle \alpha,x\rangle}-1}.$$
It is only defined for $x$ small enough.

 Denote by ${\mathcal C}(\Lambda)$ the space of complex valued functions on $\Lambda$.
If  $m\in {\mathcal C}(\Lambda)$,
 let $$b(m)=\sum_{\lambda\in \Lambda} m(\lambda)\delta_\lambda* b(\Phi)$$
the convolution of  the discrete measure $\sum_{\lambda\in \Lambda} m(\lambda)\delta_\lambda$ with $b(\Phi)$.
Thus $b(m)$  is a locally polynomial measure on $V$.

Recall that the list $\Phi$ is called  unimodular if any basis of $V$ contained in $\Phi$  is a basis of the lattice $\Lambda$.
For simplicity, we restrict to the unimodular case in this introduction.
In this case, Dahmen-Micchelli \cite{DM1} proved that  the convolution $m\mapsto b(m)$ is injective. Let us recall  Dahmen-Micchelli formula for the inverse map.
By Fourier transform, convolution becomes  the multiplication by $\hat B(\Phi)$, and  it is thus tempting to use Fourier transform to invert the convolution.
Indeed we obtain (in case where $m$ is compactly supported)
\begin{equation}\label{eq:todd}
\sum_{\lambda}m(\lambda)e^{i\ll\lambda,x\rr}=\int_V e^{i\ll v,x\rr} Todd(\Phi,x) b(m)(v)dv
\end{equation}
 for $x$ small.

Replace
  $Todd(\Phi,x)$ by its Taylor series and consider the Todd operator
$$Todd(\Phi,\partial)=\prod_{\alpha\in \Phi}\frac{\partial_\alpha}{1-e^{-\partial_\alpha}} =1+\frac{1}{2}\sum_{\alpha\in \Phi}\partial_\alpha+\cdots ,$$
an infinite series of constant coefficients differential operators on $V$. Thus, for $x$ small, after integration by parts, we still  have
\begin{equation}\label{eq:toddbis}
\sum_{\lambda\in \Lambda}m(\lambda)e^{i\ll\lambda,x\rr}=\int_V (Todd(\Phi,\partial) b(m))(v) e^{i\ll v,x\rr} dv.
\end{equation}

Miraculously, this equation still holds if we replace the integral on $V$ be the summation in  $\Lambda$, in an appropriate limit sense. Indeed we have the identity for all $x\in U$,
\begin{equation}\label{eq:toddter}
\sum_{\lambda\in \Lambda}m(\lambda)e^{i\ll\lambda,x\rr}=\lim_{t>0, t\to 0}\sum_{\lambda\in \Lambda} (Todd(\Phi,\partial)b(m))(\lambda+t\epsilon)e^{i\ll \lambda,x\rr}.
\end{equation}
Here    $\epsilon$ is a generic vector in the cone generated by the
$\alpha_k$, and $\lambda+t\epsilon$ is a regular point in $V$ (the notion of generic and regular vectors is defined in the article).
As $b(m)$ is a piecewise polynomial function,  only a finite number of terms  in the series  $Todd(\Phi,\partial)b(m)$  do not vanish at the regular points $\lambda+t\epsilon$, so that the formula is well defined.
Thus Dahmen-Micchelli deconvolution formula is:
\begin{equation}\label{eq:deconv}
m(\lambda)=\lim_{t>0,t\to 0} (Todd(\Phi,\partial)b(m))(\lambda+t\epsilon).
\end{equation}

In this article, we prove a slightly more general formula.

Let ${\bf y}=[y_1,y_2,\ldots,y_N]$ be a list of $N$ complex numbers.
The box spline $B(\Phi,\y)$, with parameter $\y$,  is the measure on $V$ such that, for a  continuous function  $F$ on $V$,
$$
\langle B(\Phi,\y),F\rangle=
\int_{0}^{1}\cdots \int_{0}^{1} e^{i\sum_{k=1}^N t_k y_k}
F(\sum_{k=1}^{N} t_k\alpha_k) dt_1\cdots dt_N.
$$
The Fourier transform  ${\hat B}(\Phi,\y)(x)$  is the function
 $${\hat B}(\Phi,\y)(x)=\prod_{k=1}^N\frac{e^{i(\langle \alpha_k,x\rangle+y_k)}-1}{i(\langle \alpha_k,x\rangle+y_k)}.$$

If  $m\in {\mathcal C}(\Lambda)$,
let
$$b(\y,m)=\sum_{\lambda\in \Lambda}m(\lambda) \delta_\lambda*  B(\Phi,\y).$$

We also  consider centered Box splines, defined using convolution of intervals $\{t\alpha_k,-\frac{1}{2}\leq t\leq \frac{1}{2}\}$,  and more generally translation of the Box spline by a parameter $r$ in the zonotope.
 We prove a   deconvolution formula similar to (\ref{eq:deconv}) for the translated box spline with parameters.
We show that the deconvolution formula  allows us to recover $m$ from  $b(\y,m)$ by  an uniform formula on
all points of $\Lambda\cap \Delta$, where $\Delta$ is a generic translation of the zonotope.
In particular, an interesting case is when $\y=0$. If the function $b(0,m)$ is polynomial on a domain $\Omega$, we obtain that $m$ is polynomial on the enlarged domain $\Omega-Z(\Phi)$.
Interesting examples of this phenomenon occur in the case of the Kostant partition function. Indeed in this case the convolution of the partition function with the Box spline  is simply a convolution of Heaviside functions, with domains of polynomiality given by the so called big chambers. More generally, these examples occur in Hamiltonian geometry, where $b(0,m)$ is the Duistermaat-Heckman measure and is polynomial on each connected component of the set of regular values of the moment map.
For example, we have used the deconvolution formula in \cite{DV} to study qualitative properties of some branching rules, for reductive noncompact Lie groups, even in the absence of explicit character formulae.

\bigskip

Let us comment on the technique used in this article.

The problem of inverting the convolution with the Box spline is  equivalent to the problem of describing the  function  number of integral points in polytopes in terms of volumes, and we could have applied results of  \cite{SZ}, \cite{BV}.
We also  gave a proof of the deconvolution formula in \cite{dpv1} for $\y=0$,  based on a detailed study of the Dahmen-Micchelli spaces of functions on $\Lambda$.

A  method by Poisson formula   was used in an unpublished article with Michel Brion to obtain formulae for partition functions, as an alternate method to the  cone decomposition method of \cite{BV}. Here we use a mixed method between  \cite{SZ}, \cite{BV}. We use a crucial lemma of \cite{SZ}, and  we follow  several of the steps of the unpublished article with Brion. So there is no new idea in this article.
However our inversion formula is slightly more general, and we believe we have clarified some of the delicate points. In particular we describe more precisely the regions of quasi polynomial behavior of $m$ in terms of the regions of polynomiality of $b(m)$.
 Furthermore, it is   stated in a quite natural  way ( we state the inversion formula  in such  a way that it is impossible to make signs mistakes, for example), and we believe that our Poisson  method is straightforward.
However, as it should be, in all these methods, the  limiting procedures are delicate. Indeed the deconvolution formula itself  is delicate.

\bigskip

Let us sketch the proof of Formula (\ref{eq:toddter})
 in the case where $m$ is the delta function at $0$  of the lattice $\Lambda$, and $\Phi$ is unimodular.
We thus need to prove the identity
$$1=\lim_{t>0, t\to 0}\left(\sum_{\lambda\in \Lambda}(Todd_{[L]}(\Phi,\partial)b(\Phi))(\lambda+t\epsilon)e^{i\ll \lambda,x\rr}\right)$$
where
$Todd_{[L]}(\Phi,\partial)$ is the series $Todd(\Phi,\partial)$ truncated at some sufficiently large order $L$, the higher  terms giving a zero contribution on the regular element $\lambda+t\epsilon$  of $V$.

We compute the term in parenthesis using Poisson formula.

Define $F([q],x)=\sum_{a=0}^{\infty} q^a F_a(x)$
 to be  the Taylor series
  of $$
  Todd(\Phi,qx)\hat B(\Phi)(x)=\prod_{k=1}^Nq(\frac{ e^{i\ll\alpha_k,x\rr}-1}{e^{iq\ll\alpha_k,x\rr}-1})$$
 at $q=0$.
The Fourier transform of
$Todd_{[L]}(\Phi,\partial)b(\Phi)$
is  the truncated series
$F(x)=\sum_{a=0}^{L}  F_a(x)$, thus
 formally
$$\sum_{\lambda\in \Lambda}Todd_{[L]}(\Phi,\partial)b(\Phi)(\lambda+t\epsilon)e^{i\ll\lambda,x\rr}=
e^{-i\ll t\epsilon,x\rr}\sum_{\gamma\in \Gamma} F(x-2\pi \gamma) e^{i\ll t\epsilon,2\pi \gamma\rr}.$$

 We compute the full formal series
 $$\sum_{\gamma\in \Gamma} F([q],x-2\pi \gamma) e^{i\ll t\epsilon,2\pi \gamma\rr},$$
 this is well defined when $\epsilon$ is generic and $t$ small,  coefficients of $q^a$ obtained by summations over $\Gamma$ are  $0$ on the regular element  $t\epsilon$  when $a$ is sufficiently large, and the limit when $t\to 0$  and $q=1$ is $1$ provided $\epsilon$ is in the cone generated by the $\alpha_k$.
This gives us the wanted result.
In fact, the main result of this article (Theorem \ref{theo;thetalim}) is a generalisation
of the equality of $L^2$-functions of $t\in \R/\Z$ :
$$e^{i\{t\}x}=
\sum_{n\in \Z}\frac{e^{ix}-1}{x-2in}e^{2i\pi nt}.$$
Here $\{t\}=t-[t]\in [0,1]$ is the fractional part of $t$.
As shown on  this example,  limits from right or left of the summation over the lattice are not the same. It is equal to $1$ only if $t$ tends to $0$ from the right.

We need furthermore to  introduce parameters $\y$. In fact,  the use of generic parameters $\y$  simplify the proofs.

Equation (\ref{eq:todd}) is very reminiscent of the ``delocalized" equivariant index formula for an equivariant elliptic operator.
If  $G$ is a torus, we have employed  the deconvolution formula for box splines  in order to obtain
multiplicities formula for the index  of a $G$-equivariant elliptic (or transversally elliptic) operator in terms of  spline functions on the lattice $\Lambda$ of characters   of $G$ (\cite{dpv1}). This slightly more general deconvolution formula proved here is similarly  needed for the proofs of the results announced in \cite{V}.

We thank  Michel Duflo for several comments
 on this manuscript.

\part{ The results}

In this part, we state precisely the theorems proven in this paper.
Thus we start by definitions and notations.

\section{Piecewise analytic functions}

Let $V$ be a finite dimensional real vector space equipped with a lattice $\Lambda$.
Denote by ${\mathcal C}(\Lambda)$ the space of complex valued functions on $\Lambda$. For $\lambda\in \Lambda$,
we denote by $\delta^{\Lambda}_\lambda$ the $\delta$ function
on $\Lambda$ such that $\delta^{\Lambda}_\lambda(\nu)=0$, except for $\nu=\lambda$ where $\delta^{\Lambda}_\lambda(\lambda)=1$.

If $h$ is a distribution on $V$, we denote by $\int_V h(v)f(v)$ its value on a test function $f$.
If $v_0\in V$, we denote by $\delta^{V}_{v_0}$ the $\delta$ distribution on $V$: $\int_V \delta^{V}_{v_0}(v)f(v)=f(v_0)$.

 Choosing  the Lebesgue measure $dv$ determined by $\Lambda$,
  we identify a generalized function $f(v)$ on $V$ to the distribution $f(v)dv$.

 Consider  $\Phi:=[\alpha_1,\alpha_2,\ldots, \alpha_N]$ be a  list of elements in $\Lambda$.
The  box spline $B(\Phi)$ is the measure on $V$ such that, for a  continuous function  $F$ on $V$,
$$
\langle B(\Phi),F\rangle=
\int_{0}^{1}\cdots \int_{0}^{1}
F(\sum_{k=1}^{N} t_k\alpha_k) dt_1\cdots dt_N.
$$

Let $Z(\Phi)=\{z=\sum_{k=1}^{N} t_k\alpha_k; 0\leq t_k\leq 1\}$ be the Minkowski sum of the segments $[0,1]\alpha_i$. The polytope $Z(\Phi)$, called the zonotope, is the support of $B(\Phi)$.

\bigskip

 Assume that $\Phi$ generates $V$.
 An  hyperplane   spanned by elements of $\Phi$ will be called a wall. A translate of a wall by an element of $\Lambda$ will be called an affine wall.

 A point $\epsilon\in V$ is called $\Phi$-generic if $\epsilon$ does not lie on any  wall. A point $v\in V$ is called $\Phi$-regular if $v$ does not lie on any affine wall.
A connected component $\tau$ of the set of $\Phi$-generic elements is called a tope. Thus topes are open cones in $V$.
A connected component ${\mathfrak c}$ of the set of $\Phi$-regular elements is called an alcove.
We denote by $V_{reg}$ the set of $\Phi$-regular elements, that is the disjoint union of the alcoves.
In the rest of this article,  we often just say that $v$ is generic, regular, etc., the system $\Phi$ being implicitly understood.

A piecewise polynomial function $b$ is a function on $V_{reg}$ such that for each alcove  ${\mathfrak c}$, there exists a polynomial function $b^{\mathfrak c}$ on $V$ satisfying $b(v)=b^ {\mathfrak c} (v)$ for $v\in  {\mathfrak c}$ .
A piecewise analytic function
 is a function on $V_{reg}$
 such that for each alcove $ {\mathfrak c}$, there exists an analytic  function $b^ {\mathfrak c}$ on $V$ satisfying $b(v)=b^ {\mathfrak c}(v)$ for $v\in  {\mathfrak c}$.

 We denote by $PW$ the space of piecewise polynomial functions.
We denote by $PW^{\omega}$ the space of piecewise analytic functions.

\begin{definition}\label{def:fc}
If $f\in PW^{\omega}$, and $\c$ is an alcove, we denote by
$f^{\c}$ the analytic function on $V$ coinciding with $f$ on $\c$. \end{definition}

 The lattice $\Lambda$ acts on $PW,PW^{\omega} $ by translation.

If $V=\R$ with lattice $\Lambda=\Z$,    such a piecewise analytic function  admits left and right limits at any point of $V$.
Let us generalize the notion of  left or right limits to our multidimensional  context.

If $v\in V$, and $\epsilon$ is a generic vector, then $v+t\epsilon$ is in $V_{reg}$ if $t>0$ and sufficiently small.
\begin{definition}\label{def:rightlimit}
Let $v\in V$, and $f\in PW$ (or $PW^{\omega})$  .
Let $\epsilon$ be a generic vector.
We define
$(\lim_{\epsilon}f)(v)=\lim_{t>0,t\to 0}f(v+t\epsilon)$.
\end{definition}
Clearly
$(\lim_{\epsilon}f)(v)$ depends only of the tope $\tau$ where $\epsilon$ belongs and is denoted by
$(\lim_{\tau}f)(v)$ in \cite{dpv1}.

  Consider $f\in PW$  (defined on $V_{reg}$) as a locally $L^1$-function on $V$,  thus $f(v)$ defines  a  generalized function on $V$.
 An element of $PW$, considered as a generalized function  on $V$,
will be called a piecewise polynomial generalized function .
 Multiplying by $dv$, we obtain the space of piecewise polynomial distributions on  $V$.
We define similarly piecewise analytic generalized functions
and piecewise analytic distributions on  $V$.

\bigskip

The box spline
is an important example of piecewise polynomial distribution.

Indeed, if  $\Phi$ spans the vector space $V$, $B(\Phi)$ is in the space $PW$ of piecewise polynomial distributions. We will write  $B(\Phi)=b(\Phi)(v)dv$, where
$b(\Phi)(v)$ is a locally polynomial function on $V$.

If $\Psi$ is a sublist of $\Phi$ still spanning $V$, then $B(\Psi)$ and its translates by elements of $\Lambda$   are again in $PW$. In fact alcoves for the system $\Psi$ are larger than alcoves for the system $\Phi$, and $B(\Psi)$ is given by a polynomial function on each alcove for $\Psi$.
If $\Psi$ does not span $V$, the distribution
$B(\Psi)$ vanishes on $V_{reg}$.

Let ${\bf y}=[y_1,y_2,\ldots,y_N]$ be a list of $N$ complex numbers.
The box spline $B(\Phi,\y)$, with parameter $\y$,  is the measure on $V$ such that, for a  continuous function  $F$ on $V$,
$$
\langle B(\Phi,\y),F\rangle=
\int_{0}^{1}\cdots \int_{0}^{1} e^{i(\sum_{k=1}^N t_k y_k)}
F(\sum_{k=1}^{N} t_k\alpha_k) dt_1\cdots dt_N.
$$

 Then, if $\Phi$ spans $V$,  $B(\Phi, \y)=b(\Phi,\y)(v)dv$ where  $b(\Phi,\y)(v)$ is in the space $PW^{\omega}$ of piecewise analytic functions.
In fact, it is a piecewise exponential polynomial function of $v$.

When $\y=[0,0,\ldots,0]$,  $B(\Phi,\y)= B(\Phi)$.

We give examples in dimension $1$.

\begin{example}\label{first}
Let $V=\R \omega$.
We identify $V$ with $\R$: $t\in \R$ is the element $t\omega$ of $V$.

$\bullet$  $\Phi_1=[\omega]$, $\y=[y]$.
Then:

$$b(\Phi_1)(t)=\begin{cases}0\ \  &\text{if\ } t<0 \\ 1 \ \ &\text{if\ } 0<t<1\\
0\ \  &\text{if\ } t>1
\end{cases}$$

while

$$b(\Phi_1,\y)(t)=\begin{cases}0\ \ & \text{if\ } t<0 \\ e^{ity} \ \ &\text{if\ } 0<t<1\\
0\ \ &\text{if\ } t>1
.\end{cases}$$

$\bullet$ Let $\Phi'_2=[\omega,-\omega]$.

Then
$$b(\Phi'_2)(t)=\begin{cases}
0\ \  &\text{if\ } t<-1 \\ \\ t+1 \ \  &\text{if\ } -1<t<0\\ \\
t-1 \ \  &\text{if\ } 0<t<1\\ \\
0\ \  &\text{if\ } t>1 \\
\end{cases}$$
while

$$b(\Phi'_2,\y)(t)=\begin{cases} 0\ \  &\text{if\ } t<-1 \\
\\
\frac{e^{it y_1} e^{i(y_1+y_2)}}{i (  y_1+ y_2) }-
\frac{e^{-it y_2}}{i (  y_1+y_2) }\ \ \  &\text{if\ } -1<t<0\\
 \\
\frac{ e^{i (y_1+y_2)}e^{-it y_2}}{i(y_1+y_2)}- \frac{e^{it y_1}}{i(y_1+y_2)}
 \ \  &\text{if\ } 0<t<1\\
\\
0\ \  &\text{if\ } t>1 \\
\end{cases}$$

$\bullet$  $\Phi=[\omega,2\omega], \y=[y_1,y_2].$

Then
$$B(\Phi)(t)=\begin{cases}0\ \  &\text{if\ } t<0 \\ \\ \frac{t}{2} \ \  &\text{if\ } 0<t<1\\ \\
\frac{1}{2} \ \  &\text{if\ } 1<t<2\\ \\
\frac{(3-t)}{2}\ \  &\text{if\ } 2<t<3 \\ \\
0\ \  &\text{if\ } t>3 \\
\end{cases}$$

while
$$B(\Phi,\y)(t)=\begin{cases} 0\ \  &\text{if\ } t<0 \\
\\
\frac{e^{it y_1}}{i \left( 2 y_1- y_2 \right) }-\frac{e^{it\frac{ y_2}{2}}}{i \left( 2 y_1- y_2 \right) }\ \ \  &\text{if\ } 0<t<1\\
 \\
\frac {e^{it\frac{ y_2}{2}} (e^{i (y_1-\frac{y_2}{2})}-1)}{i \left(2y_1- y_2 \right) }\ \  &\text{if\ } 1<t<2\\
\\
\frac{-e^{it y_1}e^{i (y_2-2y_1)}}{i \left( 2 y_1- y_2 \right) }+\frac{e^{it\frac{ y_2}{2}}e^{i (y_1-\frac{y_2}{2})}}
{i \left( 2 y_1- y_2 \right) }
\ \  &\text{if\ } 2<t<3\\\\
0\ \  &\text{if\ } t>3 \\
\end{cases}$$

\end{example}

In all the examples above, although the explicit formulae for $\CB(\Phi,\y)(t)$ seems to have poles in $\y$, it is easy to verify that on each alcove $\CB(\Phi,\y)(t)$ is an analytic function on $t,\y$.

\begin{definition}
Assume $\Phi$ generates $V$.
We denote by
$\CS$  the space of generalized functions on $V$ generated by the action of constant coefficients differential operators
and  translations by elements of $\Lambda$.
on  the piecewise polynomial $b(\Phi)$.

We denote by
$\CS^{\y}$  the space of generalized functions on $V$ generated by the action of constant coefficients differential operators
 and   translations by elements of $\Lambda$.
 on  the piecewise analytic function $b(\Phi,\y)$
\end{definition}

For example,  $$\prod_{k=1}^N (-\partial_{\alpha_k}+iy_k) b(\Phi,\y)=
\prod_{k=1}^N(e^{iy_k}\delta^{V}_{\alpha_k}-\delta^{V}_0)$$ is in the space $\CS^{\y}.$
 Here the product in the right hand side of this equation is the convolution product, thus
  the right hand side is  a sum of $\delta$-functions on $V$ with coefficients depending on $\y$.

\bigskip

Elements of  $\CS^{\y}$ can be evaluated at any regular point $v\in V_{reg}$. Thus, if $\epsilon$ is generic, we   denote  by
$$\lim_{\epsilon}^{\Lambda}:\CS^{\y}\to {\mathcal C}(\Lambda)$$
the map
 $$(\lim_{\epsilon}^{\Lambda}f)(\lambda)= (\lim_\epsilon f)(\lambda).$$


\bigskip

Let $q$ be a formal variable.
If $E$ is a vector space and $f([q])=\sum_{a=0}^{\infty} q^a f_a$ is a formal series of elements of $E$, we write $f\in E[[q]]$.
If $f(q,x)$ is a smooth function of $q$, defined near $q=0$, and depending of some parameters $x$, we denote by $f([q],x)=\sum_{a=0}^{\infty} q^a f_a(x)$ its Taylor series at $q=0$, a formal series of functions of $x$.
If the series $f([q])$ is finite (or convergent), we write
$f([1])$ or $f([q])|_{q=1}$ for the sum  $\sum_{a=0}^{\infty} f_a$.

Introduce formal series $m([q])=\sum_{a=0}^{\infty} q^a m_a$ of generalized functions  (or of distributions) on $V$. Then if $f$ is a test function
$$\int_{V}m([q])(v)f(v)dv=\sum_{a=0}^{\infty} q^a \int_{V} m_a(v)f(v)dv$$ is a formal power series in $q$.
It may be evaluated at $q=1$ on a test function $f$ if the preceding series is finite (or convergent).
Formal series of distributions occur naturally in the context of Euler-MacLaurin formula.

 If all the elements $m_a$ belong to  $\mathcal S^{\y}$, we write $m([q])\in \CS^{\y}[[q]]$.
 In this case,
$\lim_{\epsilon}^{\Lambda}m([q])=\sum_{a=0}^{\infty} q^a \lim_{\epsilon}^{\Lambda} m_a$ is a formal power series of $q$ with values in $\mathcal C(\Lambda)$. It can be evaluated at $q=1$ if the corresponding series of elements of $\mathcal C(\Lambda)$ is convergent at $q=1$.
  This is for example the case if all, but a finite number, the generalized functions $m_a$ are supported on affine walls.

\begin{definition}
Let $m([q])=\sum_{a=0}^{\infty} q^a m_a$
be a series of generalized functions, with $m_a\in  S^{\y}$.

 Assume that the series $\sum_{a=0}^{\infty} q^{a}\lim_{\epsilon}^{\Lambda} m_a$ is convergent at $q=1$.  We denote
 $$\lim_{\epsilon}^{\Lambda} m([q])|_{q=1}$$
the corresponding element $\sum_{a=0}^{\infty} \lim_{\epsilon}^{\Lambda}m_a$
 of ${\mathcal C}(\Lambda)$.
\end{definition}

We say that
 $m([q])=\sum_{a=0}^{\infty} q^a m_a$  is supported on affine walls if all the elements $m_a$ are supported on affine walls.
 In this case,  $m_a=0$  on $V_{reg}$  for all $a$, and   $$\lim_{\epsilon}^{\Lambda} m([q])|_{q=1}=0.$$

\section{Box splines with parameters}

\subsection{Inversion formula for the box spline. The unimodular case}

Let $U$ be the dual vector space of $V$.
If $b$ is a function on $V$, we denote by $\hat b(x)=\int_V e^{i\ll x,v\rr}b(v)dv$ its Fourier transform.

The Fourier transform
 \begin{equation}\label{eq:fourierbx}
{\hat B}(\Phi)(x)=\int_V e^{i\ll x,v\rr }B(\Phi)(v)
\end{equation}
of the box spline $B(\Phi)$
  is the  analytic function of $U$:
$${\hat B}(\Phi)(x)=\prod_{k=1}^N\frac{e^{i\langle \alpha_k,x\rangle}-1}{i\langle \alpha_k,x\rangle}.$$

Let us first explain the Dahmen-Micchelli inversion formula in the case where $\Phi$ (spanning $V$)  is unimodular, that is any basis of $V$ consisting of elements of  $\Phi$  is a basis of the lattice $\Lambda$.

 Consider the function  $$Todd(\Phi)(x)=\prod_{k=1}^N\frac{i\langle \alpha_k,x\rangle}{e^{i\langle \alpha_k,x\rangle}-1},$$
 that is $Todd(\Phi)(x)$ is
the inverse of ${\hat B}(\Phi)(x)$.  The function  $Todd(\Phi)(x)$ is  defined near $x=0$.
Then, when $x$  is small,
$$Todd(\Phi)(x){\hat B}(\Phi)(x)=1= Todd(\Phi)(x)\int_V e^{i\ll x,v\rr}b(\Phi)(v)dv.$$

It is tempting to  use the fact that Fourier transform exchange  the multiplication by $i\ll \alpha,x \rr$ on functions on $U$ and  the derivation $-\partial_\alpha$ on functions on $V$. We are not allowed to do this as
 $Todd(\Phi)(x)$ is not a polynomial (and is not defined when $x$ is large). Thus
introduce a variable $q$, and consider the Taylor series at $q=0$ of $Todd(\Phi)(qx)$.
$$Todd(\Phi)(qx)=\sum_{a=0}^{\infty}q^a T_a(x)=1-\frac{q}{2}\sum_{k=1}^N i\alpha_k(x)+\cdots .$$
We denote by
$$Todd(\Phi)([q],\partial)=\prod_{k=1}^N \frac{q\partial_{ \alpha_k}}{1-e^{-[q]\partial_{\alpha_k}}}=1+\frac{q}{2} \sum_{k=1}^N \partial_{\alpha_k}+\cdots$$
the corresponding series of differential operators with constant coefficients.
Thus we obtain, for $q$ sufficiently small,
$$Todd(\Phi)(qx)\hat B(\Phi)(x)=\int_V e^{i\ll x,v\rr}
(Todd(\Phi)([q],\partial)b(\Phi))(v)dv.$$

In a certain sense, this equation still holds for $q=1$, provided we replace the integral over $V$ by the sum over $\Lambda$.
Indeed,
$Todd(\Phi)([q],\partial)b(\Phi)|_{q=1}$ restricted to $\Lambda$ in the sense explained below is equal to the Dirac function
$\delta^{\Lambda}_0$ on $\Lambda$, and this clearly satisfies
$$1=\sum_{\lambda\in \Lambda} (Todd(\Phi)([q],\partial)b(\Phi))(\lambda)e^{i\ll x,\lambda\rr}|_{q=1}=Todd(\Phi)(qx)\hat B(\Phi)(x)|_{q=1}.$$

Let us explain now precisely the  results obtained in \cite{dpv1} (for which we will give another  proof in this article).

Consider the series $Todd(\Phi)([q],\partial)b(\Phi)$
of generalized functions on $V$. As $b(\Phi)$ is piecewise polynomial,
this is a series of generalized functions $\sum_{a=0}^{\infty} q^a m_a$, where all the $m_a$ are in $\mathcal S$ and all, but a finite number,  generalized functions $m_a$ are supported on affine walls.

Dahmen-Miccheli theorem is:
\begin{theorem}
If $\epsilon$ is a generic vector {\bf belonging to the cone} $Cone(\Phi)$ generated by the elements of $\Phi$, then
$$\lim_{\epsilon}^{\Lambda}Todd(\Phi)([q],\partial)b(\Phi)|_{q=1}=\delta^{\Lambda}_0.$$
  \end{theorem}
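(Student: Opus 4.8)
The plan is to prove the identity by Fourier/Poisson analysis, following the sketch given in the introduction. Write $b=b(\Phi)$ and recall that $\hat b(x)=\hat B(\Phi)(x)=\prod_k\frac{e^{i\ll\alpha_k,x\rr}-1}{i\ll\alpha_k,x\rr}$. First I would introduce the formal series
\begin{equation}\label{eq:Fdef}
F([q],x)=Todd(\Phi)([q],x)\,\hat B(\Phi)(x)=\prod_{k=1}^N q\,\frac{e^{i\ll\alpha_k,x\rr}-1}{e^{iq\ll\alpha_k,x\rr}-1},
\end{equation}
whose coefficient of $q^a$ is the Fourier transform $\hat m_a$ of the generalized function $m_a$ appearing in $Todd(\Phi)([q],\partial)b(\Phi)=\sum_a q^a m_a$. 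Since $b$ is piecewise polynomial and supported on the compact zonotope $Z(\Phi)$, each $m_a$ is a compactly supported distribution (a sum of derivatives of Heaviside-type piecewise polynomials and of $\delta$-functions on affine walls), so evaluating $m_a$ at a regular point makes sense; this is the content already recorded in the excerpt, namely $m_a\in\CS$ and all but finitely many $m_a$ are supported on affine walls.

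The key step is the Poisson summation formula: for each $a$ and for $t>0$ small (so that $\lambda+t\epsilon\in V_\reg$ for all $\lambda$), one has formally
\begin{equation}\label{eq:poisson}
\sum_{\lambda\in\Lambda}m_a(\lambda+t\epsilon)\,e^{i\ll\lambda,x\rr}
= e^{-i\ll t\epsilon,x\rr}\sum_{\gamma\in\Gamma}\hat m_a(x-2\pi\gamma)\,e^{2\pi i\ll\gamma,t\epsilon\rr}.
\end{equation}
The honest justification of \eqref{eq:poisson} is the main obstacle: one must check that the distributional Poisson formula applies after the translation by $t\epsilon$, which moves the support off the lattice and off the affine walls, and one must control the decay of $\hat m_a(x-2\pi\gamma)$ in $\gamma$. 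Here is where the crucial lemma of \cite{SZ} enters: assembling the $q$-series, $\sum_\gamma F([q],x-2\pi\gamma)e^{2\pi i\ll\gamma,t\epsilon\rr}$ is, after multiplication by $\prod_k(i(\ll\alpha_k,x\rr+\text{shift}))$, a genuinely convergent sum because of the product structure of $F$ and the genericity of $\epsilon$ (each one-dimensional factor contributes a geometric-type series in $n\in\Z$, convergent for $t$ in the appropriate half-neighbourhood of $0$). The one-variable prototype
$$e^{i\{t\}x}=\sum_{n\in\Z}\frac{e^{ix}-1}{x-2in}\,e^{2\pi i n t}$$
quoted in the introduction is exactly the $N=1$, $\Phi=[\omega]$ case, and the general case is built from it by taking products over $k$ and summing over the lattice $\Gamma$.

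Having \eqref{eq:poisson} in hand, I would compute the right-hand side of the summed $q$-series. The term $\gamma=0$ contributes $e^{-i\ll t\epsilon,x\rr}F([q],x)$, which at $q=1$ and $t\to 0^+$ tends to $\hat B(\Phi)(x)^{-1}\hat B(\Phi)(x)\cdot\text{(limit of }e^{-i\ll t\epsilon,x\rr})=1$ — more precisely $F([1],x)=1$ identically. The remaining obstacle is to show that the terms $\gamma\ne 0$ contribute $0$ in the limit $t\to 0^+$, $q=1$; this is where the hypothesis $\epsilon\in\Cone(\Phi)$ is used decisively. For $\gamma\ne 0$ one uses that $F([q],x-2\pi\gamma)$, as a $q$-series, has the property that for $a$ large the coefficient function, after the $\Gamma$-summation and restriction to $t\epsilon$, vanishes — equivalently, the corresponding $m_a$ are supported on affine walls so $\lim_\epsilon^\Lambda m_a=0$ — and that the finitely many surviving low-order terms sum to something whose limit as $t\to 0^+$ vanishes precisely because $\epsilon$ lies in the cone (the one-dimensional model shows the limit is sided: it is $1$ from the right and $0$ from the left of each factor, and $\epsilon\in\Cone(\Phi)$ forces the ``right'' side in every factor, so the cross terms with at least one $\gamma$-shift die). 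Collecting: $\lim_\epsilon^\Lambda Todd(\Phi)([q],\partial)b(\Phi)|_{q=1}$ has Fourier transform $\sum_\lambda(\cdots)(\lambda)e^{i\ll\lambda,x\rr}\equiv 1$, which is the Fourier transform of $\delta_0^\Lambda$, whence the claim. The delicate point throughout — and the reason the statement is subtle — is the interchange of the $q$-limit, the $t$-limit, and the $\Lambda$- and $\Gamma$-summations; I would handle this by working with the finite truncation $Todd_{[L]}$ for $L$ large enough that the tail contributes zero at regular points, reducing everything to finite sums of genuinely convergent series.
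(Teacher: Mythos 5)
Your plan correctly identifies the paper's strategy in outline: rewrite everything in terms of the Taylor series $F([q],x)=\prod_k q\,\frac{e^{i\ll\alpha_k,x\rr}-1}{e^{iq\ll\alpha_k,x\rr}-1}$, use Poisson summation to trade the $\Lambda$-sum for a $\Gamma$-sum (this is exactly the paper's Theorem \ref{theospline}), observe that only finitely many $q^a$-coefficients survive at regular points, and close with the one-variable Lemma \ref{bernoulli}. The paper does all of this. But there are two places where what you wrote would not actually constitute a proof, and one of them is the real content of the theorem.

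First, a local error of intuition: you claim that the $\gamma\ne 0$ terms ``contribute $0$'' and that ``the cross terms with at least one $\gamma$-shift die.'' That is not what happens, and it is not what Lemma \ref{bernoulli} says. Already for $\Phi=[\omega]$, after extracting the wall-supported higher $q$-coefficients, the order-zero coefficient is $\frac{e^{ix}-1}{i(x-2\pi n)}$ and one must sum this over all $n\in\Z$ to get $e^{i\{v\}x}$; the $n\ne 0$ terms certainly do not individually vanish, and it is only the \emph{complete} $\Gamma$-sum that produces a periodic function whose right limit at $0$ equals $1$. The $\gamma=0$ term alone would give $e^{-it\epsilon x}F([1],x)=e^{-it\epsilon x}$, which is the right answer, but this is a coincidence of the single-factor case; in general one has to evaluate the full $\hat\CV$-sum, which is what Theorems \ref{delicate} and \ref{theo;thetalim} are designed to do.

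Second, and more seriously: the sentence ``the general case is built from it by taking products over $k$ and summing over the lattice $\Gamma$'' is a gap, not a step. The pairings $\ll\alpha_k,\gamma\rr$, $k=1,\dots,N$, are \emph{not} independent coordinates on $\Gamma$: when $N>\dim V$, the $\alpha_k$ satisfy linear relations, so the product of $N$ factors cannot be summed over $\Gamma$ coordinate-by-coordinate, and the one-dimensional lemma does not tensor up. This is precisely where the crucial lemma of Szenes--Vergne enters, and its role is not (as you describe it) to make the $\Gamma$-sum convergent; it is the algebraic decomposition, Equation (\ref{eqobvious})/(\ref{eqcrucial}), which, given a relation $\sum_{k=1}^p\beta_k=0$, writes $\Theta(\Psi,\r,x,\y)$ as a $\y$-dependent combination of $\Theta(\Psi_h,\r_h,x,\y_h)$ for $p$ smaller systems $\Psi_h$, each with one relation removed. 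The paper's proof of Theorem \ref{theo;thetalim} (hence of your statement) is an induction on $N$ using this lemma, together with a careful check that the cone and zonotope conditions are preserved for each $\Psi_h$ (this is where the ``relabel so $r_1\le\cdots\le r_p$'' choice enters), plus reductions to the case $\Phi\mapsto\Phi_{\bf M}$. Without this inductive mechanism, your argument has no way to reduce a genuine multidimensional, overdetermined system to the one-variable identity you quote, and so the core difficulty of the theorem remains open. The use of a generic parameter $\y$ (so that the coefficients $c_h(\r,\y)=(-1)^{h+1}e^{ir_h\sum_{j\ne h}y_j}/(e^{i\sum_j y_j}-1)$ are finite) is also essential to make the induction go through and is absent from your outline.

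In short: you have the right general framework (Poisson, truncation, sided limit, $\epsilon\in\Cone(\Phi)$), but the heart of the proof — the Szenes--Vergne decomposition and the induction on $N$ that it drives — is missing, and it cannot be replaced by the suggested ``product over $k$'' shortcut.
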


Note that we do not assume that $\Phi$ generates
a salient cone.

%

 Let us rephrase this theorem in order that it becomes easy to remember (and to prove and generalize).

 Consider the analytic function
$$F(q,x)=Todd(\Phi)(qx)\hat B(\Phi)(x)$$
that is
$$F(q,x)=q^{N}
\left(\prod_{k=1}^N\frac{e^{i\langle \alpha_k,x\rangle}-1}{e^{iq\langle \alpha_k,x\rangle}-1}\right).$$

For $q=1$, this function is identically equal to $1$.
The power $q^N$ is such that $F(q,x)$ has no pole at $q=0$.

The Taylor series $F([q],x)$ at $q=0$ of $F(q,x)$ is a series of analytic functions of $x$. We have:

 $$F([q],x)= {\hat B}(\Phi)(x)- \frac{q}{2} (\sum_{k=1}^{N}i\alpha_k(x)) {\hat B}(\Phi)(x)+\cdots .$$

\begin{theorem}

Assume that $\Phi$ is unimodular.

Consider $$F(q,x)=q^{N}
\left(\prod_{k=1}^N\frac{e^{i\langle \alpha_k,x\rangle}-1}{e^{iq\langle \alpha_k,x\rangle}-1}\right).$$
Denote by $F([q],x)$ the Taylor series at $q=0$ of $F(q,x)$ and write
$$F([q],x)=\int_V e^{i\ll x,v\rr}m([q])(v)dv$$
where $m([q])=\sum_{a=0}^{\infty} q^a m_a$ is a series of generalized functions on $V$.

$\bullet$ Then $m([q])\in \mathcal S[[q]]$.

$\bullet$  All, but a finite number, the generalized functions $m_a$  are supported on walls.
 Thus, for any generic vector $\epsilon$, the  series $\lim_{\epsilon}^{\Lambda} m([q])$ is a polynomial in $q$.

$\bullet$
If $\epsilon$ is a generic vector {\bf belonging to the cone} $Cone(\Phi)$ generated by the elements of $\Phi$,
 $$\lim_{\epsilon}^{\Lambda}m([q])|_{q=1}=\delta^{\Lambda}_0.$$
\end{theorem}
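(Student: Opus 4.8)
The plan is to recognize $m([q])$ as $Todd(\Phi)([q],\partial)b(\Phi)$, to settle the first two bullets by a degree count, and to prove the third by Poisson summation, the real point being to reduce a one-sided lattice limit to a one-dimensional theta identity.

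First I would observe that $Todd(\Phi)(qx)\hat B(\Phi)(x)$ is, coefficient by coefficient in $q$, the Fourier transform of $Todd(\Phi)([q],\partial)b(\Phi)$: since Fourier transform exchanges multiplication by $i\ll\alpha_k,x\rr$ with $-\partial_{\alpha_k}$, we get $m_a=D_a\,b(\Phi)$, where $D_a$ is the coefficient of $q^a$ in $\prod_{k=1}^N\frac{q\partial_{\alpha_k}}{1-e^{-q\partial_{\alpha_k}}}$, a constant coefficient differential operator of order $a$. Hence every $m_a$ lies in $\mathcal S$, the first bullet. Now $b(\Phi)$ is piecewise polynomial of degree $N-d$ with $d=\dim V$, so for $a>N-d$ the operator $D_a$ annihilates every local polynomial $b(\Phi)^{\mathfrak c}$; thus $m_a$ vanishes on $V_{\reg}$, i.e. is supported on affine walls, whence $\lim_{\epsilon}^{\Lambda}m_a=0$ for $a>N-d$ and $\lim_{\epsilon}^{\Lambda}m([q])$ is a polynomial in $q$ of degree $\le N-d$. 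This is the second bullet.

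For the third bullet, each $m_a$ is compactly supported in $Z(\Phi)$, so for every $\lambda\in\Lambda$ one has, as polynomials in $q$, $(\lim_{\epsilon}^{\Lambda}m([q]))(\lambda)=\lim_{t>0,\,t\to0}m([q])(\lambda+t\epsilon)$, and by the previous paragraph $\sum_{\lambda\in\Lambda}(\lim_{\epsilon}^{\Lambda}m([q]))(\lambda)\,e^{i\ll x,\lambda\rr}$ is, at $q=1$, a finite trigonometric sum in $x$; showing it is identically $1$ forces it to equal $\delta^{\Lambda}_0$ by uniqueness of Fourier coefficients. Interchanging the finite $\lambda$-sum with $\lim_{t\to0}$, it suffices to show that $\lim_{t>0,\,t\to0}\Theta(q,x,t)$, evaluated at $q=1$, equals $1$ for all $x$, where $\Theta(q,x,t):=\sum_{\lambda}m([q])(\lambda+t\epsilon)e^{i\ll x,\lambda\rr}$ is a finite sum in each $q$-degree for $t>0$ small. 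Applying the Poisson summation formula in each $q$-degree — legitimate since $m_a$ is a compactly supported distribution, smooth near each point $\lambda+t\epsilon$ — and using $\widehat{m_a}=F_a$, one obtains
$$\Theta(q,x,t)=e^{-i\ll x,t\epsilon\rr}\sum_{\gamma\in\Gamma}F([q],x+2\pi\gamma)\,e^{-2\pi i\ll\gamma,t\epsilon\rr}.$$
Here the decisive simplification is a periodicity: since $\alpha_k\in\Lambda$ and $\gamma\in\Gamma$, we have $\ll\alpha_k,2\pi\gamma\rr\in2\pi\Z$, so the numerators $e^{i\ll\alpha_k,x+2\pi\gamma\rr}-1=e^{i\ll\alpha_k,x\rr}-1$ do not depend on $\gamma$, and $F([q],x+2\pi\gamma)$ is the fixed factor $\prod_k(e^{i\ll\alpha_k,x\rr}-1)$ times the Taylor series at $q=0$ of $q^N\prod_k\bigl(e^{iq(\ll\alpha_k,x\rr+2\pi\ll\alpha_k,\gamma\rr)}-1\bigr)^{-1}$. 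When $\Phi$ is a basis of $\Lambda$ the dual basis splits $\Gamma$ and the $\gamma$-sum factors into $N$ one-dimensional sums $q\sum_{n\in\Z}\frac{e^{i\xi_k}-1}{e^{iq(\xi_k+2\pi n)}-1}e^{-2\pi i n s_k}$, with each $s_k$ a positive multiple of $t$ because $\epsilon\in\Cone(\Phi)$; degree by degree in $q$, each such sum is the Fourier series of a fractional-part exponential of the kind occurring in $e^{i\{t\}x}=\sum_{n}\frac{e^{ix}-1}{x-2in}e^{2i\pi nt}$, so letting $s_k\to0^+$ and then putting $q=1$ each factor tends to $1$, while the prefactor $e^{-i\ll x,t\epsilon\rr}\to1$. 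For general unimodular $\Phi$ I would write $\Phi=\sigma\cup(\Phi\setminus\sigma)$ with $\sigma\subset\Phi$ a basis of $\Lambda$, use $b(\Phi)=b(\sigma)*b(\Phi\setminus\sigma)$ and $Todd(\Phi)([q],\partial)=Todd(\sigma)([q],\partial)\prod_{\alpha\in\Phi\setminus\sigma}\frac{q\partial_\alpha}{1-e^{-q\partial_\alpha}}$, and run the same Poisson computation, the additional factors being regular and equal to $1$ at $q=1$, hence inert in the limit; equivalently one invokes Theorem \ref{theo;thetalim}.

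The step I expect to be the main obstacle is this last limit. The sums over $\Gamma$ are in general only conditionally convergent, so one must first make sense of them for $t>0$, then interchange $\lim_{t\to0^+}$ with the $\gamma$-summation, and only afterwards set $q=1$ — the order is essential, since at $q=1$ every term $F([q],x+2\pi\gamma)$ equals $1$ and $\sum_\gamma e^{-2\pi i\ll\gamma,t\epsilon\rr}$ is a Dirac comb which ``vanishes'' for generic $t\epsilon$ only in the limit, never termwise. This is exactly where the crucial one-dimensional summation lemma of \cite{SZ} enters, controlling sums $\sum_n\frac{P(n)}{\prod_j(\xi_j+2\pi n)}e^{-2\pi i n s}$ and their one-sided boundary values as $s\to0^+$; and it is this one-sidedness, with $\epsilon\in\Cone(\Phi)$ selecting the side, that yields $\delta^{\Lambda}_0$ rather than another element of $\mathcal C(\Lambda)$.
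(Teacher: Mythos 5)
Your treatment of the first two bullets is correct and matches the paper: $m_a = D_a\, b(\Phi)$ lies in $\mathcal S$, the degree of $b(\Phi)$ is $N-d$, so $m_a$ vanishes on $V_\reg$ for $a>N-d$, and $\lim_\epsilon^\Lambda m([q])$ is a polynomial of bounded degree. The Poisson reformulation in the third bullet is also the one the paper uses (Theorem \ref{theospline}), and when $\Phi$ is itself a $\Z$-basis of $\Lambda$ the factorization into $d$ one-dimensional sums and the appeal to Lemma \ref{bernoulli} is a complete argument.

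The gap is in the passage from a basis to a general unimodular $\Phi$. You write $\Phi=\sigma\cup\Psi$ and assert that the factors attached to $\Psi$ ``are regular and equal to $1$ at $q=1$, hence inert in the limit.'' This is not a proof: after the Poisson shift $x\mapsto x-2\pi\gamma$, the factor for $\alpha\in\Psi$ has denominator $e^{iq(\ll\alpha,x\rr-2\pi\ll\alpha,\gamma\rr)}-1$, and since $\ll\alpha,\gamma\rr$ is a nontrivial linear form in the coordinates $n_j$ of $\gamma$ with respect to the dual basis of $\sigma$, this factor \emph{couples} the $n_j$-sums and destroys the product structure. The sums over $\Gamma$ are only conditionally convergent, and one cannot set $q=1$ termwise, so ``inert in the limit'' needs a real argument. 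The paper does \emph{not} split off a basis; it proves the (more general, parametric and translated) Theorem \ref{theo;thetalim} by induction on $N$: after rescaling and sign changes it produces a relation $\sum_{k=1}^p\beta_k=0$ among elements of the list, and the crucial algebraic input is the partial-fraction identity (\ref{eqobvious}) of Szenes--Vergne (\cite{SZ}, Lemma 1.8), which rewrites $\Theta(\Psi,\r,x,\y)$ as $\sum_{h=1}^p c_h(\r,\y)\Theta(\Psi_h,\r_h,x,\y_h)$ for systems $\Psi_h$ with one fewer element; the careful choice of ordering of the $r_k$'s ensures that $r$ stays in each $Z(\Psi_h)$ and $\epsilon$ in the corresponding tangent cone. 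Your appeal to ``one invokes Theorem \ref{theo;thetalim}'' is precisely the thing to be proved, so it does not fill the gap. In short: the Poisson framework, the basis case, and the identification of the obstacle are all right, but the inductive reduction driving the general unimodular case is missing, and it is exactly where the paper's content lies.
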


\subsection{Inversion of the Box spline with parameters}\label{subinv}

To explain what happens when  $\Phi$ is not unimodular, we need more notations. We also introduce parameters.
We assume that $\Phi$ spans $V$.

Let $\Gamma\subset U$ be the dual lattice of the lattice $\Lambda$. Thus $\ll \gamma,\lambda\rr\in \Z$ if $\gamma\in \Gamma$ and $\lambda\in \Lambda$.
We consider $\Lambda$ as the group of characters of the torus $T=U/2\pi \Gamma$,
and use the notation $s^\lambda$ for the value
of $\lambda \in \Lambda$ at $s\in T$.
If $S\in U$ is a representative of $s\in U/2\pi \Gamma$, then, by definition, $s^{\lambda}=e^{i\ll S,\lambda\rr}$.

We denote by $\hat s$ the character $\lambda\mapsto s^{\lambda}$ of $\Lambda$.
If $m$ is a function on $\Lambda$, then $\hat s m$ is a function on $\Lambda$: $\hat s m(\lambda)=s^{\lambda}m(\lambda)$.

For $s\in T$, let
 $$\Phi(s)=\{\alpha\in \Phi; s^{\alpha}=1\}.$$

\begin{definition}\label{def:cv}
Let $\CV$ be the subset of $T$ consisting of the elements $s$ such that $\Phi(s)$ still spans $V$.
\end{definition}

Thus $\CV$ is a finite subset of $T$ called the vertex set.

Let ${\bf y}=[y_1,y_2,\ldots,y_N]$ be a list of $N$ complex numbers.
The box spline $B(\Phi,\y)$, with parameter $\y$,  is the measure on $V$ such that, for a  continuous function  $F$ on $V$,
$$
\langle B(\Phi,\y),F\rangle=
\int_{0}^{1}\cdots \int_{0}^{1} e^{i(\sum_{k=1}^N t_k y_k)}
F(\sum_{k=1}^{N} t_k\alpha_k) dt_1\cdots dt_N.
$$

The Fourier transform  ${\hat B}(\Phi,\y)(x)$  is the function
 $${\hat B}(\Phi,\y)(x)=\prod_{k=1}^N\frac{e^{i(\langle \alpha_k,x\rangle+y_k)}-1}{i(\langle \alpha_k,x\rangle+y_k)}.$$

If $s\in T$,  we define
$$F_s(q,x,\y)=q^{|\Phi(s)|}
\left(\prod_{k=1}^N\frac{e^{i(\langle \alpha_k,x\rangle+i y_k)}s^{\alpha_k}-1}{e^{iq(\langle \alpha_k,x\rangle+ y_k)}s^{\alpha_k}-1}\right).$$

We denote by $F_s([q],x,\y)$ the Taylor series of
$F_s(q,x,\y)$ at $q=0$, a series of analytic functions of $x$, depending of the parameter $\y$.

Write
$$F_s([q], x,\y)=\int_V e^{i\ll x,v\rr} m_s([q],\y)(v)dv$$
where $m_s([q],\y)$ is a series of generalized functions on $V$.

\begin{theorem}\label{deconvtrans}
$\bullet$ The series $m_s([q],\y)$ of generalized functions on $V$ is in $\CS^{\y}[[q]]$.

$\bullet$
If $s$ is not in $\CV$, the
 generalized function $m_s([q],\y)$ is supported on walls.

If $\y$ is sufficiently small, and $\epsilon$ is a generic vector, the series
$\lim_{\epsilon}^{\Lambda} m_s([q],\y)$ is convergent at $q=1$.

 $\bullet$ Assume $\y$ is sufficiently small.
  If $\epsilon$ is a generic vector {\bf belonging to the cone} $Cone(\Phi)$ generated by the elements of $\Phi$,  then

  \begin{equation}\label{eqdeconv}
\sum_{s\in \CV} {\hat s}^{-1} \lim_{\epsilon}^{\Lambda} m_s([q],\y)|_{q=1}=\delta^{\Lambda}_0.
\end{equation}

\end{theorem}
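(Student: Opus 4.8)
The plan is to compute the left-hand side of \eqref{eqdeconv} by Poisson summation, reducing it to a statement about a single one-dimensional identity applied coordinate-wise in the $t$-variables of the box spline integral. First I would fix $\y$ small and a generic $\epsilon\in\Cone(\Phi)$, and unwind the definitions: since $m_s([q],\y)$ has Fourier transform $F_s([q],x,\y)$, for $t>0$ small the quantity $\sum_{\lambda\in\Lambda}m_s([q],\y)(\lambda+t\epsilon)s^{-\lambda}e^{i\ll x,\lambda\rr}$ is, by the Poisson summation formula applied to the (piecewise analytic, compactly supported after the differential operators act) function $m_s([q],\y)$, equal to a sum over the dual lattice $\Gamma$ of translates $F_s([q],x-2\pi\gamma+S,\y)\,e^{i\ll 2\pi\gamma,t\epsilon\rr}$, where $S$ represents $s$. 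Here the factor $q^{|\Phi(s)|}$ and the structure of $F_s$ are designed precisely so that, evaluated at $q=1$, $F_s(1,x,\y)\equiv 1$, and the Taylor coefficients in $q$ are the Fourier transforms of the wall-supported pieces, so summing over $s\in\CV$ collects exactly the ``vertex'' contributions and kills everything else. The first two bullets — that $m_s([q],\y)\in\CS^{\y}[[q]]$ and that $m_s$ is wall-supported when $s\notin\CV$ — follow from examining the partial fraction / product structure of $F_s([q],x,\y)$ as in the unimodular discussion preceding the statement: the generalized function attached to a product $\prod_k \frac{(\cdots)}{\ll\alpha_k,x\rr+y_k}$ is supported on the zonotope $Z(\Psi)$ for the relevant sublist $\Psi=\Phi(s)$, and this has empty interior exactly when $\Phi(s)$ does not span $V$.

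The heart of the matter is the third bullet. I would deduce it from the main theorem of the paper, Theorem~\ref{theo;thetalim}, which (per the introduction) generalizes the $L^2(\R/\Z)$ identity
$$
e^{i\{t\}x}=\sum_{n\in\Z}\frac{e^{ix}-1}{x-2in}\,e^{2i\pi n t}.
$$
Concretely: the box spline $B(\Phi,\y)$ is a pushforward under $(t_1,\dots,t_N)\mapsto\sum_k t_k\alpha_k$ of the product measure $\prod_k e^{i t_k y_k}\,\mathbf 1_{[0,1]}(t_k)\,dt_k$ on the cube. Applying the one-dimensional identity in each coordinate $t_k$ and then pushing forward — that is, multiplying the $N$ copies and summing the resulting lattice-indexed series — produces exactly the sum $\sum_{s\in\CV}\hat s^{-1}\lim_{\epsilon}^{\Lambda}m_s([q],\y)|_{q=1}$, because the pole structure $\frac{1}{\ll\alpha_k,x\rr+y_k-2\pi n}$ forces, upon summing over $\gamma\in\Gamma$ and collecting residues, a grouping of dual-lattice points according to which subset $\Phi(s)$ of the $\alpha_k$ ``resonates'' (i.e., for which $s^{\alpha_k}=1$), and only the $s$ for which the resonating set spans $V$ contribute a nonzero limit — these are exactly the elements of $\CV$. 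The condition $\epsilon\in\Cone(\Phi)$ is what selects the right one-sided limit ($t\to 0^+$) in the one-dimensional identity, so that the ``$\{t\}$'' (rather than $1-\{t\}$ or an averaged version) appears; this is precisely the point flagged in the introduction that limits from the right and left differ. Evaluating the resulting function at the origin of $\Lambda$ gives the product of the $N$ one-dimensional values $\{0^+\}=0$ contributions versus the constant term, and one reads off $\delta^{\Lambda}_0$.

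The order of steps I would follow: (1) establish the $\CS^{\y}[[q]]$ membership and wall-support claims by analysing $F_s([q],x,\y)$ as a finite sum of products of elementary fractions, identifying each with a derivative of a translated lower box spline $b(\Psi,\cdot)$; (2) prove convergence at $q=1$ of $\lim_\epsilon^\Lambda m_s([q],\y)$ for $s\in\CV$ and small $\y$ by showing all but finitely many $m_a$ are wall-supported (the finiteness coming from the fact that $b(\Phi(s),\y)$ is piecewise \emph{exponential-polynomial} of bounded degree, so high-order constant-coefficient differential operators annihilate it on $V_{\reg}$); (3) set up the Poisson summation over $\Gamma$ for the test-function pairing against $e^{i\ll x,\cdot\rr}$, valid because after differentiation $m_s([q],\y)$ is a finite combination of $\delta$-type and bounded pieces; (4) invoke Theorem~\ref{theo;thetalim} to identify the $\Gamma$-sum at $q=1$, $t\to 0^+$, with the cube integral defining $B(\Phi,\y)$, and conclude. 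The main obstacle I expect is step (4), specifically the interchange of the limit $t\to 0^+$ with the infinite sum over $\Gamma$ and with the $q\to 1$ (i.e. full-series) evaluation: the one-dimensional $L^2$ identity converges only in $L^2$, not pointwise, so one must justify that after the differential operators truncate the $q$-series to finitely many genuinely contributing terms, the remaining finite sums over $\Gamma$ converge to the correct pointwise value — this is exactly the ``delicate limiting procedure'' the author warns about, and it is where Theorem~\ref{theo;thetalim} must do its work rather than the naive term-by-term manipulation.
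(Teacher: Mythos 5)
Your high-level plan matches the paper's: prove a Poisson-summation identity for elements of $\CS^{\y}$ (the paper's Theorem~\ref{theospline}), use it to rewrite $\sum_{s\in\CV}\hat s^{-1}\lim_\epsilon^\Lambda m_s$ as a dual-lattice sum, and reduce the third bullet to Theorem~\ref{theo;thetalim}. Your treatment of the first two bullets — deducing $\CS^{\y}[[q]]$-membership from the product structure of $F_s$ (cf.\ formula~(\ref{eq;Fandboxy})), and wall-support from $\Phi(s)$ failing to span $V$ (cf.\ Lemma~\ref{lemtechnicalbis}) — is also right. If you treat Theorem~\ref{theo;thetalim} as a cited black box, the plan is complete.

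But the ``concrete'' mechanism you describe for the third bullet — applying the one-dimensional identity coordinate-wise in the cube variables $t_1,\dots,t_N$ and pushing forward — is not how Theorem~\ref{theo;thetalim} works, and it would not yield a proof. The Poisson sum is over the $d$-dimensional dual lattice $\Gamma$, where $d=\dim V$, while the cube lives in $\R^N$ with $N>d$ in general; the pushforward $(t_1,\dots,t_N)\mapsto\sum t_k\alpha_k$ does not carry $\Z^N$ to $\Lambda$ nor the product of $N$ one-dimensional dual lattices to $\Gamma$. Consequently there is no coordinate-wise factorization of the $\Gamma$-sum, and the ``resonance'' grouping by which $\alpha_k$ satisfy $s^{\alpha_k}=1$ does not fall out of an $N$-fold product argument. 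The paper instead proves Theorem~\ref{theo;thetalim} by induction on $|\Phi|$: the base case $|\Phi|=\dim V$ is the only place where the one-dimensional Lemma~\ref{bernoulli} is used directly (one may choose $\Lambda$ to have $\Phi$ as a basis, so the variables really do separate); the inductive step uses a linear relation $\sum_{k=1}^p\beta_k=0$, the partial-fraction identity~(\ref{eqobvious}) from Szenes--Vergne, and the geometric fact that $r\in Z(\Psi)$ and $\epsilon\in Cone(r,\Psi)$ descend to the sub-systems $\Psi_h$ that appear. Your proposal has no analogue of this inductive step; that is the genuine gap if you intend to prove, rather than cite, Theorem~\ref{theo;thetalim}.
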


It is easy to see (see Formula (\ref{eq;Fandboxy}))   that
 $m_s([q],\y)$ is obtained by an explicit expression in terms of derivatives and translates of the Box spline $B(\Phi(s),\y)$.
 Thus, in particular, it is supported on affine walls if $s$ is not in $\CV$.

Equation (\ref{eqdeconv}) is clearly equivalent to the identity:
 \begin{equation}\label{eqdeconvfourier}
\sum_{\lambda\in \Lambda}(\sum_{s\in \CV} {\hat s}^{-1} \lim_{\epsilon} m_s([q],\y)(\lambda)) e^{i\ll \lambda,x\rr}|_{q=1}=1.
\end{equation}

\begin{example}\label{third}
We verify Theorem \ref{deconvtrans} in the  cases of Examples \ref{first}.

We use the following Taylor series expansions at $z=0$.

\begin{equation}\label{expansion1}
\frac{z}{e^z-1}=\sum_{a=0}^{\infty} b(a) z^a/a!
\end{equation}
where $b(a)$ are the Bernoulli numbers.

%
%
%

$\bullet$

 $\Phi_1=[\omega]$, $\y=[y]$.
 Then $\CV=\{1\}$.

Then
$$F_1(q,x,\y)=q\frac{e^{i(x+y)}-1}
{e^{iq(x+y)}-1}=\frac{e^{i(x+y)}-1}
{i(x+y)} \frac{iq(x+y)}
{e^{iq(x+y)}-1}.$$

The Taylor series in $q$ is

$$F_1([q],x,\y)=
\frac{e^{i(x+y)}-1}{i(x+y)}\sum_{a=0}^{\infty} q^a b(a) \frac{(i(x+y))^a}{a!}.$$

We write $F_1([q],x,\y)=\frac{e^{i(x+y)}-1}{i(x+y)}+\sum_{a=1}^{\infty}f_a(x,\y)q^a$
where $$f_a(x,\y)=(e^{i(x+y)}-1) b(a) \frac{(i(x+y))^{a-1}}{a!}$$
and take the Fourier transform:
 $F_1([q],x,\y)=\int_\R e^{it x} m_1([q],\y)(t)dt$
with $m_1([q],\y)=\sum_{a=0}^{\infty} q^a m_a(\y).$

As, for $a\geq 1$,  $\frac{(i(x+y))^{a-1}}{a!}$ is a polynomial function of $x$,   the Fourier transform $m_a(\y)$ of  $f_a(x,\y)$ is supported on $\Z$ (more precisely on $\{0,1\}$).

Then
$$m_1([q],\y)=B(\Phi_1,\y)+\sum_{a=1}^{\infty} q^a m_a(\y)$$
and $m_a(\y)$ for $a\geq 1$  restricts  to  $0$  on $\R\setminus \Z$. We obtain that the restriction of $m_1([q],\y)$ to
 $\R\setminus \Z$ is given by

$$m_1([q],\y)(t)=\begin{cases}0\ \  &\text{if\ } t<0 \\ e^{ity} \ \  &\text{if\ } 0<t<1\\
0\ \  &\text{if\ } t>1
.\end{cases}$$
Thus we see that the right limit at all elements of $\Lambda$ is $0$, except at $\lambda=0$, where it is $1$.
 This prove that the right limit $\lim_{\epsilon}^{\Lambda}m_1([q],\y)|_{q=1}$ is equal to  $\delta^{\Lambda}_0$.
Theorem \ref{deconvtrans} would not be true with left limits.

$\bullet$  $\Phi'_2=[\omega, -\omega]$, $\y=[y_1,y_2]$.
Again, $\CV=\{1\}$.
We have
$$F_1(q,x,\y)=q^2\left(\frac{e^{i(x+y_1)}-1}{e^{iq(x+y_1)}-1}\right)\left(\frac{e^{i(-x+y_2)}-1}{e^{iq(-x+y_2)}-1}\right).$$

Thus $F_1([q],x,\y)$ is equal to
$$\left(\frac{e^{i(x+y_1)}-1}{i(x+y_1)}\right)
\left(\frac{e^{i(-x+y_2)}-1}{i(-x+y_2)}\right)
\left(\sum_{a=0}^{\infty} q^a b(a) (i(x+y_1))^a/a!\right)
\left(\sum_{\ell=0}^{\infty} q^\ell b(\ell) (i(-x+y_2))^\ell/\ell!\right).$$

We write $F_1([q],x,\y)=\int_{\R}e^{itx}m_1([q],\y)(t)dt$.
By Fourier transform, $i(x+y_1)$ acts by $iy_1-\partial_t$ and this operator annihilates the function $e^{ity_1}$ while
$$\left(\sum_{a=0}^{\infty} q^a b(a) (iy_1-\partial_t)^{a}/a!\right)\cdot e^{-it y_2}=\frac{q(i(y_1+y_2))}{e^{i[q](y_1+y_2)}-1} e^{-it y_2},$$
the series being convergent at $q=1$ for $\y$ small.
Thus using Formulae for $B(\Phi'_2,\y)$,
we obtain that the restriction to $\R\setminus \Z$ of the Fourier transform $m_1([q],\y)$  of $F_1([q],x,\y)$ is given by

 $$m_1([q],\y)(t)=\begin{cases} 0\ \  &\text{if\ } t<-1 \\
\\
qe^{i(y_1+y_2)}\frac{e^{it y_1}}{e^{i[q](  y_2+ y_1)}-1}-
q\frac{e^{-it y_2}}{e^{i[q](y_1+y_2)}-1}\ \ \  &\text{if\ } -1<t<0\\
 \\ q e^{(i(y_2+y_1))}
\frac { e^{-i ty_2}}{e^{i[q](y_2+y_1)}-1}-q
\frac { e^{i ty_1}}{e^{i[q](y_1+y_2)}-1}
\ \  &\text{if\ } 0<t<1\\
\\
0\ \  &\text{if\ } t>1 \\
\end{cases}$$

We verify that
$m_1([1],\y)(t)$ is continuous and that its restriction to $\Lambda$ is equal to $\delta^{\Lambda}_0$.
 As asserted by Theorem \ref{deconvtrans},
  we can take limits from the left or right, as the cone generated by $\Phi'_2$ is equal to $\R$.

$\bullet$  $\Phi=[\omega, 2\omega]$, $\y=[y_1,y_2]$.

Then $\CV=\{s=1,s=-1\}$.

We have

$$F_1(q,x,\y)=q^2\left(\frac{e^{i(x+y_1)}-1}{e^{iq(x+y_1)}-1}\right)\left(\frac{e^{i(2x+y_2)}-1}{e^{iq(2x+y_2)}-1}\right).$$

We write
 $F_1([q],x,\y)=\int_{\R}e^{itx}m_1([q],\y)(t)dt$.

The restriction to  $\R\setminus \Z$ of $m_1([q],\y)(t)$ is given by

$$m_1([q],\y)(t)=\begin{cases} 0\ \  &\text{if\ } t<0 \\
\\
q(\frac{e^{it y_1}}{(1-e^{i [q](y_2- 2 y_1) })}-\frac{1}{2}\frac{e^{it\frac{ y_2}{2}}}{(e^{i [q]( y_1- \frac{y_2}{2}) }-1)})\ \ \  &\text{if\ } 0<t<1\\
 \\
\frac{1}{2} q\frac {e^{it\frac{ y_2}{2}} (e^{i (y_1-\frac{y_2}{2})}-1)}{(e^{i [q]( y_1- \frac{y_2}{2}) }-1) }\ \ &\text{if\ } 1<t<2\\
\\
-q\frac{e^{it y_1}e^{i (y_2-2y_1)}}{(1-e^{i [q]\left(y_2- 2 y_1 \right) })}+\frac{1}{2} q\frac{e^{it\frac{ y_2}{2}}e^{i (y_1-\frac{y_2}{2})}}
{(e^{i [q]\left(  y_1- \frac{y_2}{2} \right) }-1)}
\ \  &\text{if\ } 2<t<3\\\\
0\ \  &\text{if\ } t>3 \\
\end{cases}$$

We  consider now the case where $s=-1$. Then
$$F_{-1}(q,x,\y)=q\left(\frac{e^{i(x+y_1)}+1}{e^{iq(x+y_1)}+1}\right)\left(\frac{e^{i(2x+y_2)}-1}{e^{iq(2x+y_2)}-1}\right).$$

We write
 $F_{-1}([q],x,\y)=\int_{\R}e^{itx}m_{-1}([q],\y)(t)dt$.

The restriction to $\R\setminus\Z$ of $m_{-1}([q],\y)$
is given by

$$m_{-1}([q],\y)(t)=\begin{cases} 0\ \  &\text{if\ } t<0 \\
\\
\frac{1}{2}\frac{e^{it \frac{y_2}{2}}}{(1+e^{i [q](y_1-y_2/2)})} \ \ \ &\text{if\ } 0<t<1\\
 \\
\frac{1}{2} e^{it\frac{ y_2}{2}}
\frac{1+e^{i (y_1-y_2/2)}}{1+e^{i [q](y_1-y_2/2)}}  \ \  &\text{if\ } 1<t<2\\
\\
\frac{1}{2}\frac{e^{it y_2/2}e^{i (y_1-y_2/2)}}{(1+e^{i [q]\left(y_1- y_2/2 \right) })}
\ \  &\text{if\ } 2<t<3\\\\
0\ \  &\text{if\ } t>3 \\
\end{cases}$$

We verify that, for $\epsilon>0$

$$\lim_{\epsilon}(m_1([1],\y))(n)+(-1)^n\lim_{\epsilon}(m_{-1}([1],\y))(n)=
\begin{cases}1\ \  &\text{if\ }n=0 \\ \\ 0 \ \  &\text{if\ } n\neq 0\\ \end{cases}$$

\end{example}

This formula is not true for $\epsilon<0$.

\subsection{Translated Box spline}\label{subtranslated}

It is quite natural to introduce translated Box splines.

Assume that $\Phi$ spans $V$.
Let $\y=[y_1,y_2,\ldots,y_N]$ be a sequence of complex numbers, as before,
and let  $r$ be a point of $V$. We choose $\r=[r_1,r_2,\ldots,r_N]$  a sequence of real numbers
 so that $r=\sum_{k=1}^N r_k \alpha_k$.
   We say that $\r$ is a $\Phi$-representation of $r$.
   Let $\ll\r,\y\rr=\sum_{k=1}^N r_k y_k$.

\begin{definition}
We define
$$B_r(\Phi,\y)(v)=e^{-i\ll\r,\y\rr}B(\Phi,\y)(v+r).$$
\end{definition}

Although $B_r(\Phi,\y)$ depends of the representation of $r$ as
 $r=\sum_{k=1}^N r_k \alpha_k$, we do not include this in the notation.

The support of $B_r(\Phi,\y)$ is $Z(\Phi)-r$. Thus $0$ is in the support of $B_r(\Phi,\y)$ if and only if $r$ belongs to the zonotope $Z(\Phi)$.
If $r\in Z(\Phi)$, we denote by $Cone(r,\Phi)$ the tangent cone at $r$ to the zonotope $Z(\Phi)$.

The Fourier transform ${\hat B}_r(\Phi,\y)(x)=e^{-i(\ll r,x\rr+\ll \r,\y \rr)}
{\hat B}(\Phi,\y)(x)$ is thus equal to

$$e^{-i(\ll r,x\rr+\ll \r,\y \rr)}\prod_{k=1}^N \frac{e^{i(\ll \alpha_k,x\rr+y_k)}-1}{i(\ll \alpha_k,x\rr+y_k)}
=\prod_{k=1}^N \frac{e^{i (1-r_k) (\ll \alpha_k,x\rr+y_k)}-e^{-ir_k(\ll \alpha_k,x\rr+y_k)}}{i(\ll \alpha_k,x\rr+y_k)}.$$

A natural point of translation is the center of the Box spline $\rho=\frac{1}{2}\sum_{k=1}^N \alpha_k$ represented by $\r_\rho=[\frac{1}{2},\ldots,\frac{1}{2}]$. Then
  $ B_{\rho}(\Phi,\y)$ has Fourier transform
$$\prod_{k=1}^N \frac{e^{i \frac{ (\ll \alpha_k,x\rr+y_k)}{2}} -e^{-i \frac{ (\ll \alpha_k,x\rr+y_k)}{2}}}{i(\ll \alpha_k,x\rr+y_k)}.$$

However, we can consider any point $r\in V$.

 We write $B_r(\Phi,\y)=b_r(\Phi,\y)(v)dv$.
We define $V_{reg,r}=V_{reg}-r$ which is the disjoint union of translated alcoves $\c-r$, with boundaries the translated walls.
We construct similarly the space $PW_r$ of piecewise polynomial functions on $V_{reg,r}$,
 the space  $PW^{\omega}_r$ of piecewise analytic functions on $V_{reg,r}$.
 Then $b_r(\Phi,\y)\in PW^{\omega}_r$.
 We denote by $\CS_r^{\y}$ the space of derivatives by constant coefficients differential operators of $b_r(\Phi,\y)$. Spaces $PW_r,PW^{\omega}_r,\CS^{\y}_r$  are isomorphic to the space $PW, PW^{\omega},\CS^{\y}$ by the translation
$$(\tau(r)f)(v)=f(v+r).$$
If $\epsilon$ is a generic vector, we can still take the limit on $\Lambda$ (Note: we do not translate our lattice) of a function in $\CS_r^{\y}$:

$$\lim_\epsilon^{\Lambda}:\CS_r^{\bf y}\to {\mathcal C}(\Lambda).$$

If $s\in T$,  we define
$$F_s(q,\r,x,\y)=q^{|\Phi(s)|}e^{i(q-1)(\ll r,x\rr+\ll \r,\y \rr)}\left(\prod_{k=1}^N\frac{e^{i(\langle \alpha_k,x\rangle+i y_k)}s^{\alpha_k}-1}{e^{iq(\langle \alpha_k,x\rangle+ y_k)}s^{\alpha_k}-1}\right)$$
$$=q^{|\Phi(s)|}\left(\prod_{k=1}^N\frac{e^{-ir_k(\langle \alpha_k,x\rangle+y_k)}(e^{i(\langle \alpha_k,x\rangle+i y_k)}s^{\alpha_k}-1)}{e^{-iqr_k(\langle \alpha_k,x\rangle+y_k)}(e^{iq(\langle \alpha_k,x\rangle+ y_k)}s^{\alpha_k}-1)}\right).$$

When $q=1$, $F_s(q,r,x,\y)=1$.

We denote by $F_s([q],r,x,\y)$ the Taylor series of
$F_s(q,r,x,\y)$ at $q=0$.

Write
$$F_s([q],r, x,\y)=\int_V e^{i\ll x,v\rr} m_s([q],r,\y)(v)dv$$
where $m_s([q],r,\y)$ is a series of generalized functions on $V$.
The following theorem is the main theorem of this article.

\begin{theorem}\label{theo:deconvtrans}
$\bullet$ The series $m_s([q],\r,\y)$ of generalized functions on $V$ is in $\CS_r^{\y}[[q]]$.

$\bullet$
If $s$ is not in $\CV$, the
 generalized function $m_s([q],\r,\y)$ is supported on translated walls.

If $\y$ is sufficiently small, and $\epsilon$ is a generic vector, the series
$\lim_{\epsilon}^{\Lambda} m_s([q],\r,\y)$ is convergent at $q=1$.

 $\bullet$ Assume $\y$ is sufficiently small.
 If $r$ {\bf belongs to the zonotope}, and if
   $\epsilon$ is a generic vector {\bf belonging to the cone} $Cone(r,\Phi)$ tangent at $r$ to the zonotope  $Z(\Phi)$,

  \begin{equation}\label{eqdeconvr}
\sum_{s\in \CV} {\hat s}^{-1} \lim_{\epsilon}^{\Lambda} m_s([q],\r,\y)|_{q=1}=\delta^{\Lambda}_0.
\end{equation}

\end{theorem}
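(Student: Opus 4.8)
The plan is to combine an analysis on the Fourier side (for the two structural bullets) with Poisson summation together with the theta-limit identity of Theorem~\ref{theo;thetalim} (for the deconvolution identity~\eqref{eqdeconvr}). First I would unwind the definition of $F_s(q,\r,x,\y)$: the prefactor $q^{|\Phi(s)|}$ exactly cancels the simple zeros at $q=0$ of the factors indexed by the $k$ with $\alpha_k\in\Phi(s)$, while the remaining factors (those with $s^{\alpha_k}\neq1$, together with the exponential prefactor $e^{i(q-1)(\ll r,x\rr+\ll\r,\y\rr)}$) are analytic at $q=0$ with Taylor coefficients that are polynomials in $x$; this is formula~\eqref{eq;Fandboxy}. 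Hence, coefficient by coefficient in $q$, $F_s([q],\r,x,\y)$ is a polynomial in $x$ times the Fourier transform of a fixed translate of $b(\Phi(s),\y)$, so the $q^a$-coefficient of $m_s([q],\r,\y)$ is a constant coefficient differential operator applied to a translate of $b(\Phi(s),\y)$, giving $m_s([q],\r,\y)\in\CS_r^{\y}[[q]]$. If $s\notin\CV$ then $\Phi(s)$ does not span $V$, so $b(\Phi(s),\y)$ is supported on a proper affine subspace, all the $m_a$ are supported on translated walls, and $\lim_{\epsilon}^{\Lambda}m_s([q],\r,\y)$ is a polynomial in $q$, in particular convergent at $q=1$. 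For $s\in\CV$ the series is genuinely infinite, and I would prove convergence at $q=1$ for $\y$ small by reducing, through the product structure, to the classical radius of convergence $2\pi$ of $z/(e^{z}-1)=\sum_{a}b(a)z^{a}/a!$, combined with the $C^{a}/a!$ bound on the $q^{a}$-part of the Todd-type operator and the controlled growth of the derivatives of the piecewise exponential-polynomial $b(\Phi(s),\y)$.

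For the deconvolution identity I would use the equivalence with~\eqref{eqdeconvfourier} and fix $t>0$ small and a generic $\epsilon\in\mathrm{Cone}(r,\Phi)$, so that $\lambda+t\epsilon$ is regular for every $\lambda\in\Lambda$. For $q$ in a small disc around $0$ the functions $F_s(q,\r,\cdot,\y)$ are honest analytic functions, and the Poisson summation formula gives, for each $s$,
$$\sum_{\lambda\in\Lambda}{\hat s}^{-1}(\lambda)\,m_s([q],\r,\y)(\lambda+t\epsilon)\,e^{i\ll\lambda,x\rr}
= e^{-i\ll t\epsilon,x\rr}\sum_{\gamma\in\Gamma}F_s([q],\r,x-2\pi\gamma,\y)\,e^{i\ll t\epsilon,2\pi\gamma\rr},$$
the twist by ${\hat s}^{-1}$ amounting to evaluating $F_s$ along a fixed lift of $s$ in $U$; one then propagates this identity coefficient by coefficient in $q$ to the distributional setting of the first step. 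For each fixed power of $q$ the $\Gamma$-sum converges absolutely, since the box-spline factors decay along $\Gamma$ once $t>0$ and $\epsilon$ is generic.

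The analytic heart is the evaluation of the $\Gamma$-sum, and this is where Theorem~\ref{theo;thetalim} — the multidimensional, parametrized generalization of $e^{i\{t\}x}=\sum_{n\in\Z}\frac{e^{ix}-1}{x-2in}e^{2i\pi nt}$ — does the work: using the crucial summation (iterated-residue) lemma of~\cite{SZ}, the limit as $t\to0^{+}$ of the $\Gamma$-sum, evaluated at $q=1$, collapses to a finite sum of vertex contributions indexed by $\CV$. Assembling $\sum_{s\in\CV}{\hat s}^{-1}(\cdots)$, these vertex terms cancel in pairs — genericity of $\y$ keeping all denominators distinct and ruling out accidental coincidences — and the only survivor is the $s=1$, $\gamma=0$ term $F_1([q],\r,x,\y)|_{q=1}$, which is identically $1$. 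That this term is not cancelled, and is selected with the correct branch, is exactly where the hypotheses $r\in Z(\Phi)$ and $\epsilon\in\mathrm{Cone}(r,\Phi)$ enter: approaching $0$ through the tangent cone at $-r$ to the support of $B_r(\Phi,\y)$ picks out the branch of value $1$, just as the right limit $e^{i\{0^{+}\}x}=1$ does in the scalar case. This yields $\sum_{s\in\CV}{\hat s}^{-1}\lim_{\epsilon}^{\Lambda}m_s([q],\r,\y)|_{q=1}=\delta^{\Lambda}_0$.

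The main obstacle I expect to be the control of the nested limits. One must justify Poisson summation for periodizations that, after the differential operators of the first step, are only distributions supported on walls — handled by proving the identity for $q$ small, where everything is a genuine function, and then propagating in $q$. One must establish the \cite{SZ}-type localization collapsing the $\Gamma$-sum to vertex terms, with uniformity strong enough to commute it with $t\to0^{+}$; this is the technical core, and the substance of Theorem~\ref{theo;thetalim}. Finally one must carry out the finite but intricate cancellation among the $s\in\CV$ and pin down the correct branch at $s=1$, $\gamma=0$; the one-sidedness of $\lim_{\epsilon}$ and the zonotope hypothesis are precisely what exclude a sign error, which is why the statement is framed through the $F_s$ normalized so that $F_s(1,\cdot)\equiv1$ and through $\lim_{\epsilon}^{\Lambda}$.
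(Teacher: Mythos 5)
Your overall architecture matches the paper's: prove the two structural bullets on the Fourier side via formula~\eqref{eq;Fandboxy}, then reduce the identity~\eqref{eqdeconvr} to a statement about periodized rational functions via Poisson summation, and invoke Theorem~\ref{theo;thetalim}. The first two bullets are essentially right. The Poisson step is also in the right spirit, though the paper justifies it by proving the periodization identity for $b_r(\Phi,\y)$ itself as an $L^2$ Fourier series on $V/\Lambda$ and then differentiating (Theorem~\ref{theospline}), rather than by a "prove for $q$ small, propagate in $q$" device, which does not literally work: even for fixed small $q\neq 0$, $F_s(q,\r,\cdot,\y)$ is bounded but non-decaying, so classical Poisson does not apply and you are forced back to the distributional statement for each $q^a$-coefficient anyway.

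The real problem is your description of the analytic heart. You claim that, at $q=1$ and $t\to 0^+$, the $\Gamma$-sum "collapses to a finite sum of vertex contributions indexed by $\CV$" and that these terms "cancel in pairs", leaving only the $(s,\gamma)=(1,0)$ term $F_1(1,\r,x,\y)=1$. That is not what happens, and if you tried to run that argument you would get stuck. After Poisson, the double sum over $s\in\CV$ and $\gamma\in\Gamma$ is precisely a single infinite sum over $w\in\hat\CV$,
\[
W([q],t\epsilon,x,\y)=e^{-i\ll t\epsilon,x\rr}\sum_{w\in\hat\CV}Z([q],\r,w)(x,\y)\,e^{it\ll\epsilon,w\rr},
\]
and Theorem~\ref{theo;thetalim} is the statement that this whole infinite series, at $q=1$, has one-sided limit $1$ as $t\to0^+$. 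Nothing collapses to finitely many terms, and nothing cancels in pairs; every $w\in\hat\CV$ contributes. The prototype is Lemma~\ref{bernoulli}: the right limit of $\sum_{n\in\Z}\frac{e^{ix}-1}{i(x-2\pi n)}e^{2i\pi nv}=e^{i\{v\}x}$ as $v\to0^+$ equals $1$ because of the jump of $\{v\}$, not because the $n\neq 0$ terms cancel each other. Likewise, the lemma you cite from \cite{SZ} (Lemma~1.8, Equation~\eqref{eqcrucial} here) is not an iterated-residue localization that picks out vertex contributions; it is the algebraic identity writing $\Theta(\Psi,\r,x,\y)$ as a finite combination of $\Theta(\Psi_h,\r_h,x,\y_h)$ for the $(N-1)$-element subsystems $\Psi_h$, together with the compatibility $r\in Z(\Psi_h)$, $\epsilon\in\Cone(r,\Psi_h)$. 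This is the induction step that reduces the theta-limit statement to the one-dimensional Fourier-series computation; it is not a residue collapse applied after Poisson. Your reliance on genericity of $\y$ to force pairwise cancellation is a symptom of the same misconception: the theorem only requires $\y$ small, and the brief excursion to $\sum_k y_k\neq 0$ in the proof is removed at the end by analyticity, not retained as a hypothesis. So the correct mechanism is: Poisson identifies the left-hand side of~\eqref{eqdeconvr} (in its Fourier form~\eqref{eqdeconvfourier}) with the periodized series over $\hat\CV$, and Theorem~\ref{theo;thetalim}, proved by induction on $|\Phi|$ via~\eqref{eqcrucial} with base case the scalar Lemma~\ref{bernoulli}, gives the one-sided boundary value $1$ of that entire series under the hypotheses $r\in Z(\Phi)$ and $\epsilon\in\Cone(r,\Phi)$.
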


\begin{remark}\label{remind}

$\bullet$
The function
$F_s([q],\r,x,\y)$ is equal to $$e^{i ([q]-1)\ll r,x\rr} e^{i([q]-1)(\ll \r,\y\rr)}F_s([q],x,\y).$$
When we evaluate at $q=1$, we see that  $\ll\r,\y\rr$ plays no role.
In particular, Theorem \ref{theo:deconvtrans} does not depend of the way $r$ is represented as $\sum_{k=1}^N r_k\alpha_k$.

$\bullet$
The series of functions $m_s([q],\r,\y)$   is obtained
   from the series of functions $m_s([q],\y)$ (defined in the preceding subsection, Subsection \ref{subinv})
  by a rather amusing operation.
On each alcove $\c$, the series of functions $m_s([q],\y)$ is given by  the restriction to $\c$ of a series of  analytic functions
  $m_s^{\c}([q],\y)$ defined on all $V$.
  Then we see that  $m_s([q],\r,\y)$ on $\c-r$ is just equal to the series
   $e^{i([q]-1)\ll \r,\y\rr} m_s^{\c}([q],\y)$, restricted to $\c-r$. Indeed the
   Fourier transform  of the operator $e^{i ([q]-1)\ll r,x\rr}$ acts by on $f\in PW_r^{\omega}$ by taking the Taylor series  of $f$ at $v+r$,  evaluated at $v+r-qr$.
   It is NOT the identity at $q=1$, as $f$ is not analytic on $V$!!.

\end{remark}

Let us consider the corresponding functions for $\Phi'_2$ and $\y=0,q=1$ (Figure \ref{figuremove}).
We  see that these functions restricted to the lattice are still equal to $\delta_0^{\Lambda}$, under the condition that $-1<r<1$.
But when $|r|>1$, then $0$  is not any more on the support, so the restriction cannot  be $\delta_0^{\Lambda}$.

\begin{figure}\label{figuremove}
\begin{center}
   \includegraphics[width=3cm]{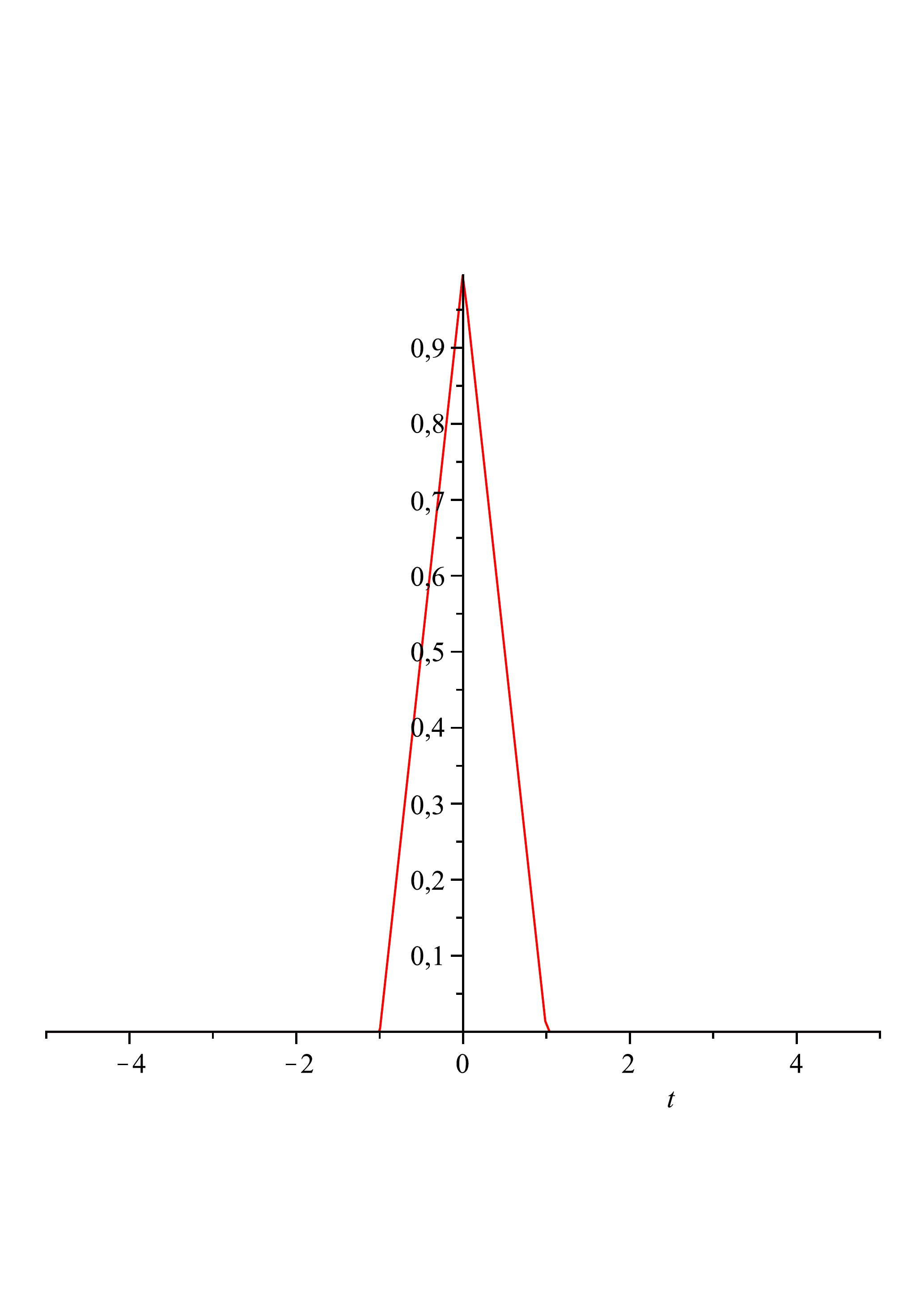}   \includegraphics[width=3cm]{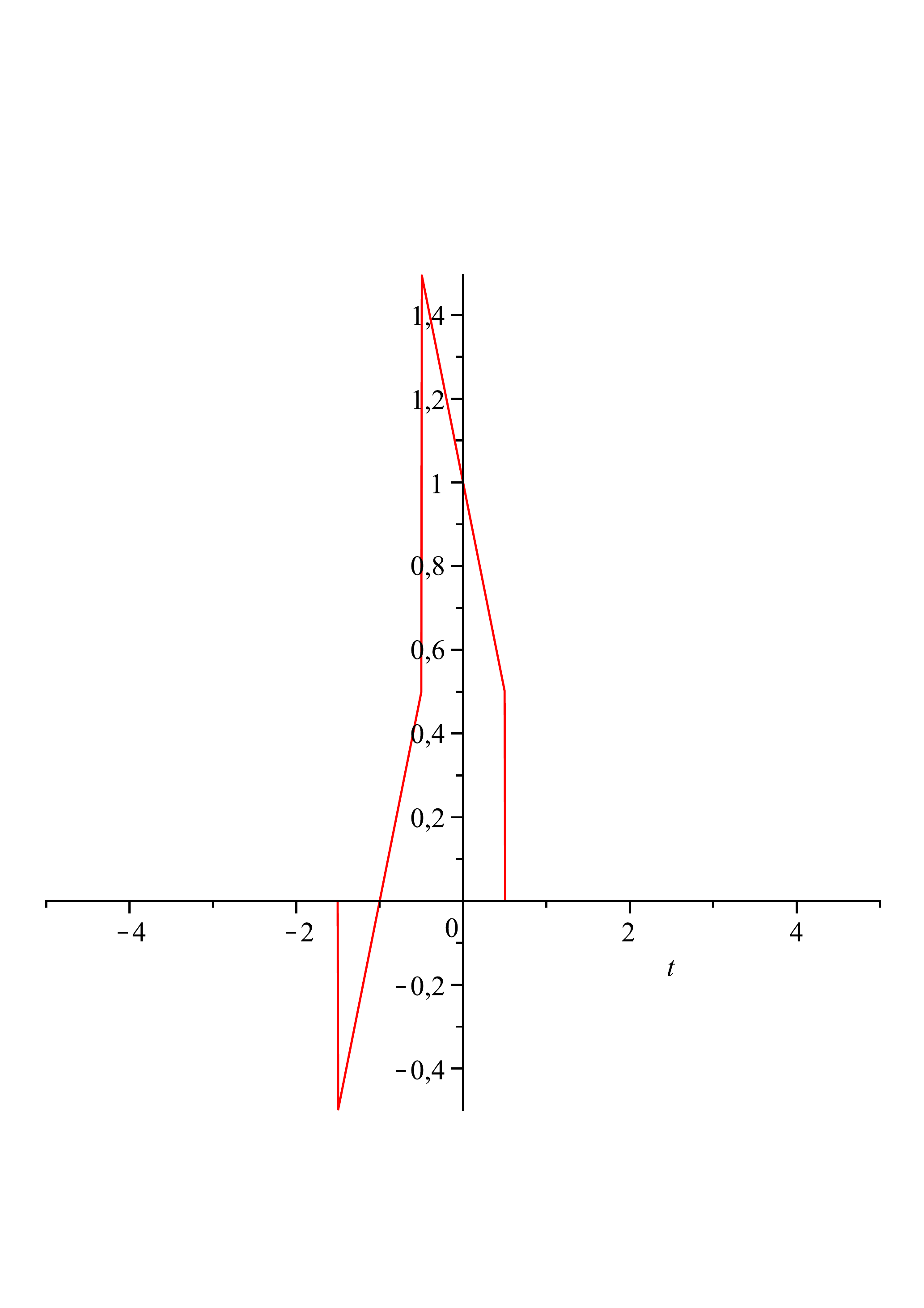}\includegraphics[width=3cm]{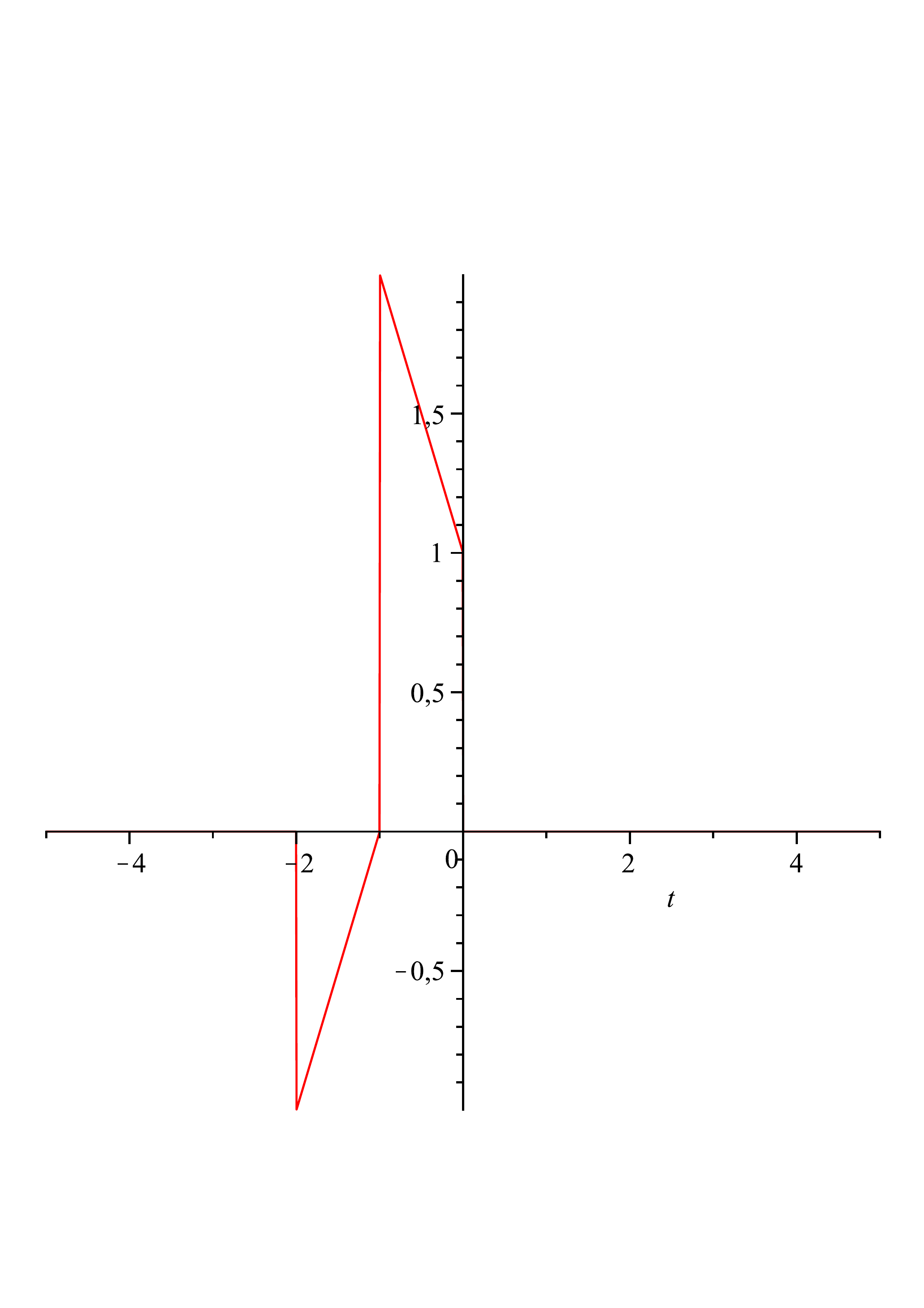}
\includegraphics[width=3cm]{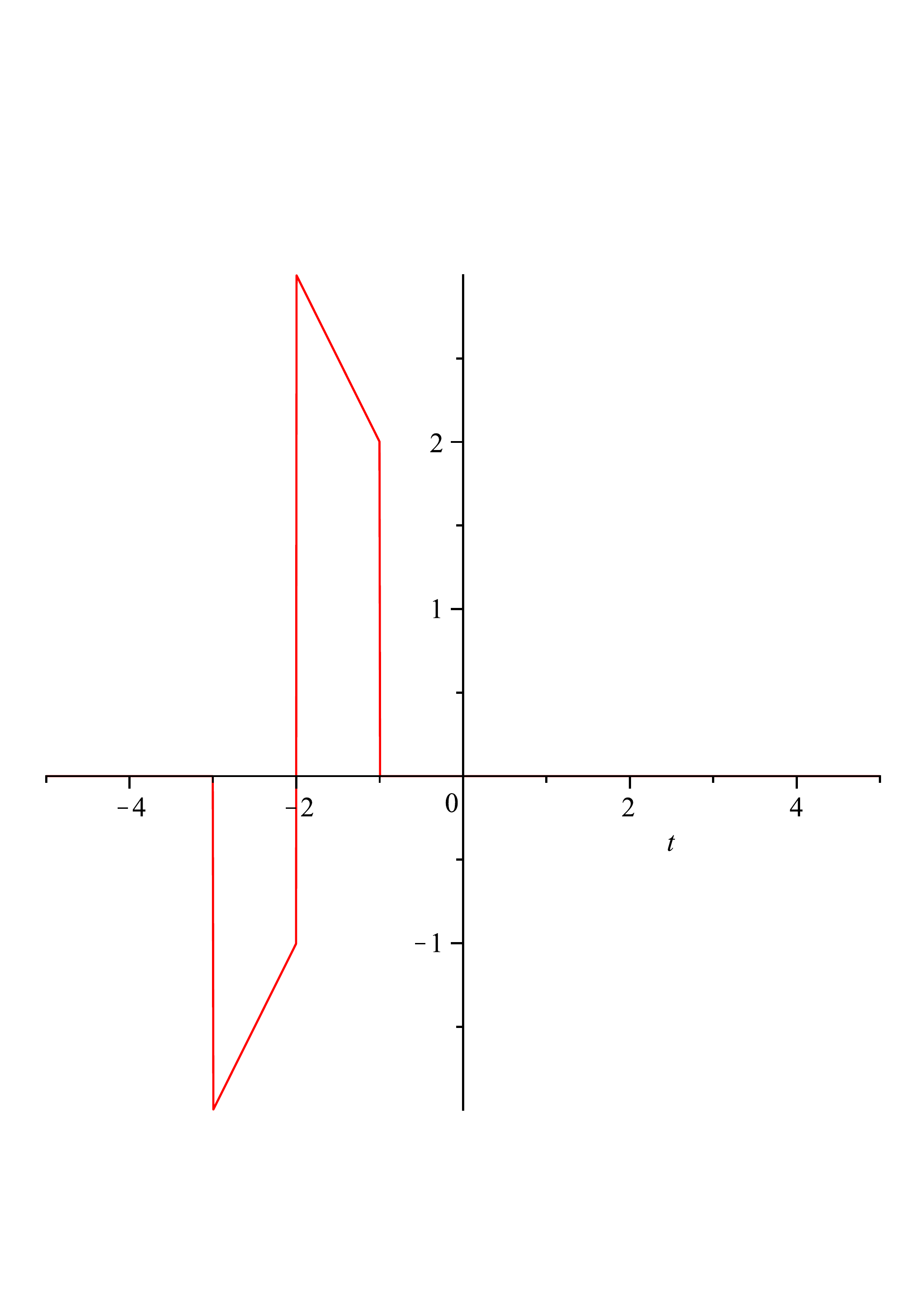}  \caption{$r=0$,\, $r=\frac{1}{2}$, \, $r=1$,\, $r=2$}
  \end{center}
\end{figure}

For $\y=0$, Theorem  \ref{theo:deconvtrans} is equivalent to  our description of the local pieces
$m_s^{\c}$ of the functions $m_s$, in \cite{dpv1}.
We will show this point in  Theorem \ref{theosameasdpv}.

\section{Rational functions on $U$ and  functions on $T$ }\label{secillumin}

We consider a system $\Phi$ spanning $V$, and contained in a lattice $\Lambda$ with dual lattice $\Gamma$.
We follow notations of Subsection \ref {subtranslated}.
We denote by  $\CH$   the space of holomorphic functions  on $U\times \C^N$.

Recall the definition of $\CV$ (Definition \ref{def:cv}).
\begin{definition}\label{def:hatcv}
We denote by $\hat \CV$ the reciproc image
of $\CV$  in $U$.
\end{definition}
The subset  $\hat \CV$  of $U$  contains $2\pi \Gamma$ and is a finite union of cosets of $2\pi \Gamma$.
Remark also that $\hat \CV$ depends only of the list $\Phi$ and not of the lattice $\Lambda$ containing the list $\Phi$.
If $\Phi$ is unimodular, then $\hat \CV=2\pi \Gamma$.

Denote  by $\overline{u}$ the image of $u\in U$ in $U/2\pi \Gamma$.  Thus $\Phi({\overline u})=\{\alpha_k, e^{i\ll \alpha_k,u\rr}=1\}$.

\begin{definition}
Define
$$\Theta(\r, x,\y)=
\frac{e^{i(\ll r,x\rr+\ll \r,\y\rr)}}{\prod_{k=1}^N(e^{i(\langle \alpha_k,x \rangle+y_k)}-1)}=
\prod_{k=1}^N
\frac{e^{ir_k(\langle \alpha_k,x \rangle+y_k)}}{e^{i(\langle \alpha_k,x \rangle+y_k)}-1}.$$
\end{definition}

The function $\Theta(\r, x,\y)$ satisfies the following covariance properties.

\label{coinv}
$\bullet$
If  $\gamma\in \Gamma$, then

 $$\Theta(\r, x+2\pi \gamma,\y)=e^{i\ll r,2\pi \gamma\rr}  \Theta(\r, x,\y).$$

$\bullet$  If $\r=\r'+[a_1,a_2,\ldots,a_N]$,  then
$$\Theta(\r, x,\y)=e^{i(\ll a,x\rr+\ll {\bf a},\y\rr)}\Theta(\r',x,\y)$$
with $a=\sum_{k=1}^N a_k \alpha_k$.

\bigskip

For $w\in V$, define
$$Z(q,\r,w)(x,\y)=
q^{|\Phi(\overline{w})|}
e^{i (q-1)(\ll r,x-w\rr+\ll \r,\y\rr)}\prod_{k=1}^N
\frac{e^{i(\langle \alpha_k,x\rangle +y_k)}-1}
{e^{iq(\langle \alpha_k,x-w\rangle +y_k)}e^{i\langle \alpha_k,w\rangle }-1}.
$$
%

If $q=1$, $Z(q,\r,w)=1$.

Let
$Z([q],\r,w)$ be the Taylor series of $Z(q,\r,w)$.
This is a series of holomorphic functions of $w,x,\y$, and we write
$Z([q],\r,w)(x,\y)=\sum_{a=0}^{\infty}q^a z_a(w,x,\y)$.
When $(x,\y)$ varies in a compact subset of $U_\C\times \C^N$, the functions
$z_a(w,x,\y)$ are of at most polynomial growth on $w$ , on each coset of  $2\pi \Gamma$  (this will be proven in Lemma \ref{lemtechnical}).
We can then define the following sum in the sense of generalized functions.

\begin{definition}\label{defCB}
$$\CB([q],\r)(v)(x,\y)=\sum_{w\in {\hat \CV}}Z([q],\r,w)(x,\y)e^{ i\langle v,w\rangle }.$$
\end{definition}

This sum has a meaning as a generalized function of $v$ with coefficients in $\CH$.

Then, we have
\begin{theorem}\label{delicate}

$\bullet$ For $v\in V_{reg,r}$, the function
$(v,x,\y)\mapsto \CB([q],\r)(v)(x,\y)$ is a formal series of  analytic functions of $(v,x,\y)$.

$\bullet$ Let $\y$ be small enough.
 If $\epsilon$ is a generic vector,   then
$\lim_{t>0,t\to 0} \CB([q],\r)(t\epsilon)(x,\y)$ is a series of analytic functions in $(x,\y)$  convergent for $q=1$.

Furthermore, if $r$ {\bf is in the zonotope} $Z(\Phi)$ and if $\epsilon$
{\bf belongs to the cone} $Cone(r,\Phi)$ tangent at $r$ to $Z(\Phi)$, then
$$\lim_{t>0,t\to 0} \CB([q],\r)(t\epsilon)(x,\y)|_{q=1}=1.$$
\end{theorem}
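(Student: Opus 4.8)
The plan is to reduce everything to a one-dimensional statement — essentially the Fourier expansion of the fractional part function quoted in the introduction — and then assemble the multidimensional case by a partial fraction / product decomposition. First I would record the precise analytic input: for a single primitive $\alpha$ and a real $r$, the $L^2$-function $t\mapsto e^{i\{t\}y}$ on $\R/\Z$ has Fourier series $\sum_{n}\frac{e^{iy}-1}{y-2i\pi n}e^{2i\pi nt}$, and more generally the translated/parametrized version attached to $B_r(\Phi,\y)$ in one variable. The key lemma from \cite{SZ} that the paper advertises should give exactly the control needed to sum the series $\sum_{w\in\hat\CV}$ defining $\CB([q],\r)(v)(x,\y)$ coefficient-by-coefficient in $q$, using that the $z_a(w,x,\y)$ grow at most polynomially on each coset of $2\pi\Gamma$ (Lemma \ref{lemtechnical}). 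This handles the very first bullet: on $V_{reg,r}$ each coefficient $\sum_{w}z_a(w,x,\y)e^{i\langle v,w\rangle}$ is, by Poisson summation applied coset-by-coset, the Fourier transform of a piecewise-analytic function evaluated at a regular point, hence analytic in $(v,x,\y)$ there.

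Next I would identify $\CB([q],\r)(v)(x,\y)$, as a generalized function of $v$, with $\sum_{s\in\CV}\hat s^{-1}$ applied to the Fourier transform of $m_s([q],\r,\y)$ against the exponential $e^{i\langle\cdot,x\rangle}$ — i.e. relate Definition \ref{defCB} to the functions appearing in Theorem \ref{theo:deconvtrans}. Concretely: group $\hat\CV$ into cosets indexed by $s\in\CV$, write $w=2\pi\gamma+S_s$ with $S_s$ a representative of $s$, and check that $\sum_{\gamma}Z([q],\r,2\pi\gamma+S_s)(x,\y)e^{i\langle v,2\pi\gamma+S_s\rangle}$ is precisely $s^{-v}$ times (the $x$-Fourier transform of) $m_s([q],\r,\y)$; this is just matching the product formulas for $F_s(q,\r,x,\y)$ against $Z(q,\r,w)$ with $w\mapsto\bar w$ selecting $\Phi(s)$. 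Once this bookkeeping is done, the limit statement $\lim_{t\to0^+}\CB([q],\r)(t\epsilon)(x,\y)|_{q=1}=1$ becomes equivalent to Theorem \ref{theo:deconvtrans}'s identity \eqref{eqdeconvr} read in Fourier-transformed form \eqref{eqdeconvfourier}. So the remaining content is: take the right limit along a generic $\epsilon$, show it converges at $q=1$ (for $\y$ small), and evaluate it.

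For the limit itself I would argue as follows. Fix $\epsilon$ generic. For each $q$ near $0$, the generalized function $m_s([q],\r,\y)$ agrees on the alcove $\c$ containing small $t\epsilon$ (after translation by $r$) with an honest analytic function $m_s^{\c}([q],\r,\y)$ on all of $V$; all but finitely many of the $q$-coefficients are supported on translated walls, by the second bullet, so $\lim_{t\to0^+}$ kills them and the series in $q$ truncates — giving convergence at $q=1$. Evaluating at $q=1$: since $F_s(q,\r,x,\y)=1$ identically at $q=1$, the total sum $\sum_{s\in\CV}\hat s^{-1}m_s([1],\r,\y)$, viewed on $V$, has $x$-Fourier transform equal to $\sum_{w\in\hat\CV}e^{i\langle v,w\rangle}$ formally, i.e. it is the Dirac comb on $\hat\CV$'s dual — but restricted to the regular point $t\epsilon$ and then summed over $\Lambda$ it collapses to a single Dirac, provided the right limit picks out the correct local piece. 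This is where the condition $r\in Z(\Phi)$ and $\epsilon\in Cone(r,\Phi)$ enters decisively: the tangent cone condition guarantees that moving from $r$ into the zonotope in the direction $\epsilon$ lands in the local piece of $B_r(\Phi,\y)$ whose "constant term" is $1$ (the piece near the vertex $0$ of the translated support), exactly as in the one-dimensional picture where $\lim_{t\to0^+}$ of $e^{i\{t\}y}$ is $1$ but $\lim_{t\to0^-}$ is $e^{iy}$.

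The main obstacle, as the author flags, is making the interchange of limits rigorous: the sum over $\hat\CV$, the Taylor expansion in $q$, the specialization $q=1$, and the one-sided limit $t\to0^+$ must be performed in a controlled order. The polynomial-growth estimates of Lemma \ref{lemtechnical} together with the \cite{SZ} summation lemma let us sum over each coset of $2\pi\Gamma$ for fixed $q$-order $a$; the truncation of the $q$-series upon taking the right limit (finitely many $m_a$ not supported on walls) then legitimizes setting $q=1$. Smallness of $\y$ is used precisely to keep the one-dimensional series $q(\ll\alpha,x\rr+y)/(e^{iq(\ll\alpha,x\rr+y)}-1)$ convergent at $q=1$ near the relevant poles. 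I expect the bulk of the work to be the careful coset decomposition of $\hat\CV$ and verifying the "tangent cone selects the vertex piece" claim — the rest is Poisson summation plus the quoted lemma.
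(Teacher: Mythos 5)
Your proposal correctly identifies the framework the paper uses — Lemma \ref{lemtechnical} to make the sum over $\hat\CV$ well-defined, the coset decomposition $\hat\CV=\bigsqcup_{s\in\CV}(S_s+2\pi\Gamma)$ to match $\CB([q],\r)(v)$ with the $m_s([q],\r,\y)$ and thereby show Theorem \ref{delicate} is \emph{equivalent} to Theorem \ref{theo:deconvtrans} via Poisson (Theorem \ref{theospline}), and the one-dimensional base case via Lemma \ref{bernoulli}. That much is right. But there is a genuine gap at the center: the paragraph beginning ``Evaluating at $q=1$: since $F_s(q,\r,x,\y)=1$ identically at $q=1$, the total sum \dots has $x$-Fourier transform equal to $\sum_{w\in\hat\CV}e^{i\langle v,w\rangle}$ formally, i.e.\ it is the Dirac comb \dots'' is exactly the naive interchange the paper is at pains to avoid. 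The formal series $\CB([q],\r)(v)$ was built by Taylor-expanding in $q$ and resumming term-by-term in $w$; setting $q=1$ before that resummation is not legitimate, and the whole point of the hypotheses ($\y$ small, $\epsilon$ generic, $r\in Z(\Phi)$, $\epsilon\in Cone(r,\Phi)$) is to identify precisely when the limit nevertheless yields $1$. Asserting that this follows from $F_s(1,\cdot)=1$ is circular — it is a restatement of the conclusion. You yourself flag that ``the bulk of the work'' is the ``tangent cone selects the vertex piece'' claim, but you never carry it out.

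The paper's actual argument is an \emph{induction on $N=|\Phi|$}, proved in the $\Theta$-normalised form of Theorem \ref{theo;thetalim}. After preliminary reductions (signs and scalings, which preserve $\hat\CV$, the zonotope, and the tangent cone), one picks a relation $\sum_{k=1}^p\beta_k=0$ among the elements and applies the partial-fraction identity \eqref{eqobvious}, i.e.\ the Szenes--Vergne Lemma 1.8 in the form of the ``crucial proposition'': it writes $\Theta(\Psi,\r,x,\y)$ as a combination $\sum_{h=1}^p c_h(\r,\y)\,\Theta(\Psi_h,\r_h,x,\y_h)$ of $\Theta$-functions of the $(N-1)$-element systems $\Psi_h$, \emph{and simultaneously} shows that the hypotheses $r\in Z(\Psi)$, $\epsilon\in Cone(r,\Psi)$ propagate to $r\in Z(\Psi_h)$, $\epsilon\in Cone(r,\Psi_h)$ for every $h$. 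Getting this propagation requires the careful relabelling so that $r_1\le\cdots\le r_p$ (refined by the direction $\epsilon$ when some $r_u=r_{u+1}$). The $q=1$ evaluation then passes through the induction: the coefficients $c_h(\r,[q]\y)$ converge at $q=1$ for $\y$ small, the inductive hypothesis gives $\lim_{t\to 0^+}\CT(\Psi_h,[q],\r_h)(t\epsilon)|_{q=1}=\Theta(\Psi_h,\r_h,\cdot)$, and the crucial identity reassembles $\Theta(\Psi,\r,\cdot)$. Your ``partial fraction / product decomposition'' hint gestures at this, but without the explicit identity, the reduction to relations of the form $\sum\beta_k=0$, and above all the geometric half of the crucial proposition (inheritance of the cone condition), the induction cannot be set up — and that is where the ``tangent cone selects the vertex piece'' claim actually gets proved, by reducing it in $N$ steps to the fractional-part computation at the end of Section \ref{secillumin}.
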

As we will see, this theorem is equivalent to Theorem \ref{theo:deconvtrans}.

\bigskip

We reformulate this theorem by using meromorphic functions.
Let
 $$Q(q,\r,w,x,\y)= \left(q^{|\Phi(\overline{w})|} \frac{e^{iq(\ll r,x-w\rr+\ll \r,\y\rr)}e^{i\ll r,w\rr}}
{\prod_{k=1}^N(e^{iq(\langle \alpha_k,x-w\rangle +y_k)}e^{i\langle \alpha_k,w\rangle}-1)}\right),
$$
that is
\begin{equation}\label{eq;CQ}
Q(q,\r,w,x,\y)=\Theta(r, qx+(1-q)w,q\y).
\end{equation}

We have
$$Q(q,\r,w,x,\y)=Z(q,\r,w,x,\y)\Theta(\r,x,\y).$$

We  consider the series
 $$\CT([q],\r)(v)(x,\y)=\sum_{w\in \hat\CV}
Q([q],\r,w,x,\y)e^{i\ll v,w\rr}.$$

Then equivalently,

\begin{theorem}\label{theo;thetalim}
Let $\y$ be small enough.
 If $\epsilon$ is a generic vector,   then
$$\lim_{t>0,t\to 0} \CT([q],\r)(t\epsilon)(x,\y)$$ is a series of meromorphic functions, convergent for $q=1$.
Furthermore, if $r$ {\bf  is in the zonotope} $Z(\Phi)$ and if $\epsilon$
{\bf belongs to the cone} $Cone(r,\Phi)$ tangent at $r$ to $Z(\Phi)$, then
$$\lim_{t>0,t\to 0} \CT([q],\r)(t\epsilon)(x,\y)|_{q=1}=\Theta(\r,x,\y).$$
\end{theorem}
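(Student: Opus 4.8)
The plan is to deduce Theorem~\ref{theo;thetalim} from Theorem~\ref{delicate} by multiplying through by the fixed meromorphic factor $\Theta(\r,x,\y)$. Recall the identity $Q([q],\r,w,x,\y)=Z([q],\r,w,x,\y)\,\Theta(\r,x,\y)$, which holds termwise in the formal variable $q$ because $\Theta(\r,x,\y)$ does not involve $q$. Hence, as formal series in $q$ with coefficients generalized functions of $v$ and meromorphic in $(x,\y)$,
\begin{equation*}
\CT([q],\r)(v)(x,\y)=\Theta(\r,x,\y)\,\CB([q],\r)(v)(x,\y).
\end{equation*}
For $v\in V_{reg,r}$, the first bullet of Theorem~\ref{delicate} says $\CB([q],\r)(v)(x,\y)$ is a formal series of analytic functions of $(v,x,\y)$; multiplying by the $v$-independent meromorphic function $\Theta(\r,x,\y)$ (whose only singularities are along the hyperplanes $\ll\alpha_k,x\rr+y_k\in 2\pi\Z$, avoided once $\y$ is small and $x$ is generic) gives that $\CT([q],\r)(v)(x,\y)$ is a formal series of meromorphic functions in $(x,\y)$, analytic in $v$ on $V_{reg,r}$.

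Next I would take the limit $t>0,\ t\to 0$ along a generic $\epsilon$. Since $\Theta(\r,x,\y)$ does not depend on $t$, the limit commutes with multiplication by it: for each fixed coefficient $q^a$,
\begin{equation*}
\lim_{t>0,t\to 0}\CT([q],\r)(t\epsilon)(x,\y)
=\Theta(\r,x,\y)\,\lim_{t>0,t\to 0}\CB([q],\r)(t\epsilon)(x,\y),
\end{equation*}
and the right-hand limit exists as a series of analytic functions in $(x,\y)$ by the second bullet of Theorem~\ref{delicate}. Because convergence at $q=1$ is asserted there for the $\CB$-side and $\Theta(\r,x,\y)$ is a single factor independent of $a$, the $\CT$-series converges at $q=1$ as a series of meromorphic functions.

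Finally, under the hypotheses that $r$ is in the zonotope $Z(\Phi)$ and $\epsilon\in Cone(r,\Phi)$, Theorem~\ref{delicate} gives $\lim_{t>0,t\to 0}\CB([q],\r)(t\epsilon)(x,\y)|_{q=1}=1$, hence
\begin{equation*}
\lim_{t>0,t\to 0}\CT([q],\r)(t\epsilon)(x,\y)\big|_{q=1}=\Theta(\r,x,\y)\cdot 1=\Theta(\r,x,\y),
\end{equation*}
which is the claim. The only genuinely delicate point is the interchange of the limit $t\to 0$ with evaluation at $q=1$, i.e. justifying that one may first sum the series over $\hat\CV$ term by term in $q$, then take $t\to 0$, then set $q=1$; but this is exactly the content already packaged into Theorem~\ref{delicate} (via the polynomial-growth estimate of Lemma~\ref{lemtechnical} and the truncation argument that makes all but finitely many coefficients supported on translated walls), so here it is only a matter of transporting that statement across the harmless multiplier $\Theta(\r,x,\y)$. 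I would close by remarking that, conversely, dividing by $\Theta(\r,x,\y)$ recovers Theorem~\ref{delicate}, so the two statements are indeed equivalent, as claimed in the text.
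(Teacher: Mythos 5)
Your proposal is circular. You deduce Theorem~\ref{theo;thetalim} from Theorem~\ref{delicate} by multiplying through by the $q$- and $v$-independent factor $\Theta(\r,x,\y)$, using $Q([q],\r,w,x,\y)=Z([q],\r,w,x,\y)\,\Theta(\r,x,\y)$. That multiplication is indeed a trivially correct translation between the two statements; the paper itself says ``Then equivalently, Theorem~\ref{theo;thetalim}'' immediately after stating Theorem~\ref{delicate}, precisely for this reason. But the paper never proves Theorem~\ref{delicate} independently: the line ``We now prove Theorem \ref{delicate}, or rather Theorem \ref{theo;thetalim}, by induction on the number of elements in $\Phi$'' makes clear that the two results are regarded as a single statement, to be proved once. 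So invoking Theorem~\ref{delicate} as a known fact is exactly assuming what must be established.

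The actual content of the proof is entirely missing from your proposal: the reduction to a system $\Psi$ with a relation $\sum_{k=1}^p\beta_k=0$ (via the elementary reductions replacing $\alpha_k$ by $-\alpha_k$ or $M\alpha_k$); the base case $N=\dim V$, which collapses to the one-dimensional Lemma~\ref{bernoulli}; the technical polynomial-growth estimate of Lemma~\ref{lemtechnical} and the vanishing-on-walls Lemma~\ref{lemtechnicalbis}, needed to justify summing over any finite union of cosets containing $\hat\CV$ (Corollary~\ref{corvanish}); the algebraic identity~(\ref{eqobvious}) and the resulting crucial decomposition~(\ref{eqcrucial}) of $\Theta(\Psi,\r,x,\y)$ into a sum $\sum_h c_h(\r,\y)\,\Theta(\Psi_h,\r_h,x,\y_h)$ over the $(N-1)$-element systems $\Psi_h$, together with the careful ordering of $r_1,\ldots,r_p$ so that $r\in Z(\Psi_h)$ and $\epsilon\in Cone(r,\Psi_h)$; and finally the induction step assembling these pieces and passing the limit $t\to 0$ and the evaluation $q=1$ through the finite sum. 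None of this appears in your argument, which only ``transports'' a statement that has not yet been established.

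What you have written would be a correct, short derivation if the paper had an independent proof of Theorem~\ref{delicate}; it does not. To repair the proposal you would need to prove one of the two equivalent statements from scratch, and the natural route, as the paper shows, is the induction on $N$ anchored on the [SZ, Lemma 1.8] decomposition~(\ref{eqcrucial}).
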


Let us explain the philosophy of this theorem and, for this purpose, it is sufficient to consider the case $\y=\r=0$, and the unimodular case where $\hat\CV=2\pi \Gamma$.
Consider the function
$$\Theta(x)=\prod_{k=1}^N
\frac{ 1}{e^{i\langle \alpha_k,x \rangle}-1}.$$
It is a function of $x$ invariant by translation by an element of $2\pi \Gamma$: $\Theta(x+2\pi \gamma)=\Theta(x)$.
Consider the Laurent series $T(x)$ of $\Theta(x)$ at $x=0$, up to  order $N-1$. This is a  rational function of $x$ decreasing at $\infty$.
If we sum $T(x)$  over the coset  $x-2\pi \Gamma$, we reobtain a periodic function of $x$, with same Taylor series at $x=0$ than $\Theta(x)$ up to order $N-1$.
However,  the summation is not absolutely convergent.
Thus we  introduce  the oscillatory term $e^{2i\pi \ll v,\gamma\rr}$ and consider instead
$$\CT([q])(v)(x)=\sum_{\gamma\in  \Gamma} q^{N}\prod_{k=1}^N \left(\frac{1}{e^{i[q]\langle \alpha_k,x-2\pi\gamma \rangle}-1}\right)e^{2i \pi\ll v,\gamma\rr}$$
a formal sum of rational functions of $x$, converging in the distribution sense (in $v$).
Theorem \ref{th:Thetadelicateuni} asserts that when $q=1$ and $v$ tends to $0$ in appropriate directions, we  recover $\Theta(x)$.

It is enlightening, and needed  for our proof by induction, to give the full proof of Theorem \ref{theo:deconvtrans}  in the simplest case $\Phi=[\omega]$ in $V=\R \omega$.
 The following well known lemma is the heart of the proof.

Let $v\in \R$ and  let  $[v]$ denotes the integral part of $v$. Then the function $\{v\}=v-[v]$ is a periodic function of $v$, and $\{v\}=v$ when $0<v<1$.
\begin{lemma}\label{bernoulli}
We have the equality of $L^2$-functions of  $v\in \R/\Z$:
$$\sum_{n\in \Z}  \frac{e^{ix}-1}{i(x-2\pi n)} e^{2i\pi n v}= e^{i\{v\}x}.$$
\end{lemma}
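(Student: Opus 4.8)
The plan is to recognize this as the classical Fourier series of the periodic, piecewise-smooth function $v\mapsto e^{i\{v\}x}$ on $\R/\Z$, with $x$ treated as a fixed parameter (and, since we only claim an $L^2$ equality, we may as well assume $e^{ix}\neq 1$ so the function is genuinely discontinuous at the integers; the case $x\in 2\pi\Z$ is trivial since both sides equal $1$). First I would fix $x$ and compute the Fourier coefficients $c_n = \int_0^1 e^{i\{v\}x}e^{-2i\pi n v}\,dv = \int_0^1 e^{ivx}e^{-2i\pi n v}\,dv$, using $\{v\}=v$ on $(0,1)$. This is an elementary integral:
\begin{equation*}
c_n=\int_0^1 e^{i(x-2\pi n)v}\,dv=\frac{e^{i(x-2\pi n)}-1}{i(x-2\pi n)}=\frac{e^{ix}-1}{i(x-2\pi n)},
\end{equation*}
where in the last step I use $e^{-2i\pi n}=1$ for $n\in\Z$. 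Thus the $n$-th Fourier coefficient of $e^{i\{v\}x}$ is exactly the $n$-th summand appearing on the left-hand side of the asserted identity.

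Next I would invoke the basic convergence theorem for Fourier series: the function $v\mapsto e^{i\{v\}x}$ is bounded (hence in $L^2(\R/\Z)$), so by the Riesz–Fischer theorem its Fourier series $\sum_{n\in\Z} c_n e^{2i\pi n v}$ converges to it in the $L^2$ norm. Substituting the computed $c_n$ gives precisely
\begin{equation*}
\sum_{n\in\Z}\frac{e^{ix}-1}{i(x-2\pi n)}e^{2i\pi n v}=e^{i\{v\}x}
\end{equation*}
as an equality of $L^2$-functions of $v\in\R/\Z$, which is the claim.

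There is essentially no serious obstacle here: the only things to be careful about are (i) noting that $\sum_n |c_n|^2<\infty$ is automatic from $e^{i\{v\}x}\in L^2$ (one does not need to estimate it by hand, though one could observe $|c_n|=O(1/|n|)$), and (ii) being explicit that the equality is only asserted in the $L^2$ (equivalently, almost everywhere in the sense of Fourier series) sense, not pointwise — indeed at $v\in\Z$ the series converges to the midpoint $\tfrac12(1+e^{ix})$, which is the point the surrounding discussion emphasizes when it insists on taking the limit $t\to 0^+$ from the right. If one prefers to avoid quoting Riesz–Fischer, the same conclusion follows from the completeness of the exponentials $\{e^{2i\pi n v}\}_{n\in\Z}$ as an orthonormal basis of $L^2(\R/\Z)$, together with the Parseval/Bessel computation identifying $c_n$ with the inner product $\langle e^{i\{v\}x}, e^{2i\pi n v}\rangle$.
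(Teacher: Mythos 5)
Your proof is correct and follows essentially the same route as the paper: compute the Fourier coefficients $\int_0^1 e^{i\{v\}x}e^{-2i\pi n v}\,dv = \frac{e^{ix}-1}{i(x-2\pi n)}$ and invoke $L^2$ convergence of the Fourier series. The added remarks about the $x\in 2\pi\Z$ case and the midpoint value at integers are harmless elaborations but do not change the argument.
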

\begin{proof}
Indeed, let us compute the $L^2$-expansion of the periodic function
$v\mapsto e^{i\{v\}x}$ on $\R/\Z$.
By definition,  this  is
$$\sum_{n\in \Z} \left(\int_{0}^1e^{i\{v\}x} e^{-2i\pi n v} dv\right) e^{2i\pi n v}=
\sum_{n\in \Z} \left(\int_{0}^1 e^{iv(x-2\pi n)} dv\right) e^{2i\pi nv}
$$
$$= \sum_{n\in \Z}  \frac{e^{i(x-2\pi n)}-1}{i(x-2\pi n)} e^{2i\pi nv}=\sum_{n\in \Z}  \frac{e^{ix}-1}{i(x-2\pi n)} e^{2i\pi nv}.$$
\end{proof}

Thus consider $V=\R\omega$, $\Phi=[\omega]$, $\y=[y]$ and $\r=[r]$ and let
 $$\Theta(\r,x,\y)=\frac{e^{i r(x+y)}}{e^{i(x+y)}-1},$$
 $$Q(q,r,w,x,\y)=e^{i qr(x-w+y)}e^{irw}\frac{q}{e^{i q(x-w+y)}e^{iw}-1}.$$

We have $\hat\CV=2\pi \Z$.
We see that the Taylor series
$Q([q],r,2\pi n,x,\y)=\sum_{a=0}^{\infty} q^a z_a(x,\y,n)$ of $Q(q,r,2\pi n,x,\y)$
is of the form
$$e^{2i\pi nr} \left(\frac{1}{i (x-2\pi n+y)}+
\sum_{a=1}^{\infty} q^a p_a(n,x,y)\right)$$
where $p_a(n,x,y)$  is polynomial in $n,x,y$.
So the sum $\sum_{n\in \Z}e^{2i\pi nr}p_a(n,x,y)e^{2i\pi nv}$  is
a distribution of $v$ supported on $v+r\in \Z$.
Thus, by Lemma   \ref{bernoulli},  the series $\sum_{n\in \Z} Q([q],r,2\pi n,x,\y)e^{2i\pi nv}$ restricted to $v+r\notin \Z$ is equal to $$\frac{e^{i\{v+r\}(x+y)}}{e^{i(x+y)}-1}.$$

If $r$  is not an integer, we obtain  when $v$ tends to $0$ that the limit is $$\frac{e^{i(\{r\})(x+y)}}{e^{i(x+y)}-1}.$$

If $0<r<1$ , this is $\Theta(\r,x,\y)$.

If $r=0$,  the right limit of
$\{v+r\}$ when $v$ tends to $0$ is $r$.

If $r=1$,
the left limit of
$\{v+r\}$ when $v$ tends to $0$ is $r$.

Thus Theorem \ref{theo;thetalim} holds, if and only if $r, \epsilon$ verifies the conditions stated in the theorem.

\part{Proofs}
Our strategy to prove Theorem \ref{theo:deconvtrans} is to apply  Poisson formula, and  see that Theorem \ref{theo:deconvtrans} is equivalent to Theorem
\ref{delicate}.
Then we  prove Theorem
\ref{delicate} by induction on the number of elements of $\Phi$.

\section{Poisson formula for  derivatives of splines}
Let $f$ be a smooth function on $V$ with compact support.
 By Poisson formula, we have the equality for $(x,v)\in U\times V$:

 \begin{equation}\label{classicpoisson}
 \sum_{\lambda\in \Lambda} f(\lambda+v)e^{i \ll x,\lambda\rr}=\sum_{\gamma\in \Gamma}\hat f(x-2\pi\gamma) e^{-i \ll v,x-2\pi \gamma\rr}.
\end{equation}
As $\hat f$ is rapidly decreasing on $U$, the series of the second member is absolutely convergent and defines a smooth function of $v$.

Let $b\in \CS_r^{\y}$, then $b$ is   a generalized function on $V$  with compact support (a derivative of the piecewise analytic function $b_r(\Phi,\y)$).
 Thus $b$ can be evaluated  on $V_{reg,r}$.
Let $v\in V_{reg,r}$.  The point $\lambda+v$ is again regular, and we can form
$$\sum_{\lambda\in \Lambda}  b(\lambda+v)e^{i\ll x,\lambda\rr}.$$

Consider $\hat b(x)=\int_{V}e^{i\ll x,t\rr} b(t)dt$, an analytic function on $U$. This time $\hat b$ is not rapidly decreasing, but it is a function of $x$ with at most polynomial growth.
 We may thus consider
 the series
 $$\sum_{\gamma\in \Gamma}\hat b(x-2\pi\gamma) e^{-i \ll v,x-2\pi \gamma\rr}$$
in the sense of generalized function of $v$.

We have the following theorem.

\begin{theorem}\label{theospline}
 Let $b\in \CS_r^{\y}$.
On $U\times V_{reg,r}$, we have  the following  equality of analytic functions of $(x,v)$:

 \begin{equation}\label{ourpoisson}
 \sum_{\lambda\in \Lambda} b(\lambda+v)e^{i \ll x,\lambda\rr}=\sum_{\gamma\in \Gamma}\hat b(x-2\pi\gamma) e^{-i \ll v,x-2\pi \gamma\rr}.
\end{equation}

(The second series is defined in the sense of generalized functions of $v$.)

\end{theorem}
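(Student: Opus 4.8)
The plan is to reduce the statement to classical Poisson summation \eqref{classicpoisson} by approximating the generalized function $b \in \CS_r^{\y}$ by genuine smooth compactly supported functions, being careful that the two sides of \eqref{ourpoisson} have different analytic natures: the left side is honestly a function of $v$ (since $\lambda+v$ stays in $V_{reg,r}$ and $b$ restricts there to a piecewise analytic function), while the right side converges only as a generalized function of $v$ because $\hat b$ has merely polynomial growth. First I would fix a constant-coefficient differential operator $D$ and a translation $\tau(\mu)$, $\mu\in\Lambda$, so that $b = \tau(\mu)\, D\, b_r(\Phi,\y)$ up to a finite linear combination of such; by linearity it suffices to treat a single term, and since $\tau(\mu)$ just reindexes the sum over $\Lambda$ and multiplies by $e^{-i\ll x,\mu\rr}$ on the left (and acts by $e^{i\ll \mu, x-2\pi\gamma\rr}$ on the right, consistently), one reduces to $b = D\, b_r(\Phi,\y)$.

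Next I would handle the differential operator on the Fourier side. Since Fourier transform intertwines $\partial_v$ with multiplication by $-i x$, we have $\hat b(x) = P(x)\,\widehat{b_r(\Phi,\y)}(x)$ where $P$ is a polynomial (a product of linear forms $-i\ll\alpha,x\rr$ type factors coming from $D$), and $\widehat{b_r(\Phi,\y)}(x) = \hat B_r(\Phi,\y)(x)$ is the explicit product computed in Subsection \ref{subtranslated}. This $\hat b$ indeed has polynomial growth in $x$: the product $\prod_k \bigl(e^{i(1-r_k)(\ll\alpha_k,x\rr+y_k)} - e^{-ir_k(\ll\alpha_k,x\rr+y_k)}\bigr)/\bigl(i(\ll\alpha_k,x\rr+y_k)\bigr)$ is bounded (the numerators are bounded for real $x$, the denominators grow), and multiplication by $P$ gives polynomial growth. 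Hence the series $\sum_{\gamma}\hat b(x-2\pi\gamma)e^{-i\ll v,x-2\pi\gamma\rr}$ makes sense as a tempered distribution in $v$ on each period, uniformly in $x$ in compact sets, so the right side is well defined.

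Then comes the approximation argument, which I expect to be the main obstacle. Choose a sequence of smooth bump functions $\chi_n$ with $\chi_n \to \delta^V_0$ and form $b_n := \chi_n * b$, which is smooth and compactly supported, so \eqref{classicpoisson} applies verbatim to each $b_n$, giving $\sum_{\lambda} b_n(\lambda+v)e^{i\ll x,\lambda\rr} = \sum_{\gamma}\hat\chi_n(x-2\pi\gamma)\hat b(x-2\pi\gamma)e^{-i\ll v,x-2\pi\gamma\rr}$. On the right, $\hat\chi_n \to 1$ uniformly on compacta and stays uniformly bounded, so the right side converges, in the sense of generalized functions of $v$ and uniformly for $x$ in compact sets, to the right side of \eqref{ourpoisson} — here one uses that the $\hat b(x-2\pi\gamma)$ have polynomial growth so the partial sums over $\gamma$ are controlled distributions. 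On the left, for $v$ in a fixed alcove $\c - r \subset V_{reg,r}$, each $b_n(\lambda+v) = \int \chi_n(u) b(\lambda+v-u)\,du \to b^{\c-r}(\lambda+v)$ by analyticity of $b$ near the point $\lambda+v$ (for $n$ large the support of $\chi_n$ stays inside the alcove through $\lambda+v$), with the convergence locally uniform in $(x,v)$ and with uniform-in-$n$ control of the tail in $\lambda$ coming from the compact support of $b$ (only finitely many $\lambda$ contribute, uniformly for $v$ in a compact set and $n$ large). Matching the two limits gives \eqref{ourpoisson} as an equality of generalized functions in $v$ on $V_{reg,r}$; since the left side is visibly real-analytic in $(x,v)$ there (finite sum of analytic functions), the right side is too, and the equality holds pointwise. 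The delicate point to get right is the interchange of the $v$-distributional limit with the $\lambda$-summation and the $x$-dependence; I would phrase everything by testing against a fixed test function $\phi(v)$ supported in one alcove translate, reducing both sides to honest absolutely convergent sums before passing to the limit.
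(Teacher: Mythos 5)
Your proof is correct, but it follows a genuinely different route from the paper. The paper's proof is considerably shorter: it observes directly that $F(x,v)=\sum_{\lambda\in\Lambda}b_r(\Phi,\y)(v+\lambda)e^{i\ll x,v+\lambda\rr}$ is, for each fixed $x$, a $\Lambda$-periodic, piecewise analytic function of $v$, hence lies in $L^2(V/\Lambda)$; one then simply computes its Fourier series on the torus $V/\Lambda$ (the coefficient at $\gamma$ collapses, after folding the sum over $\Lambda$ back into an integral over $V$, to $\hat b(x-2\pi\gamma)$), obtains the identity as an $L^2$ equality, upgrades to a pointwise equality of analytic functions on $V_{reg,r}$, and finally differentiates in $v$ by constant-coefficient operators to cover all of $\CS_r^{\y}$. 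No approximation by smooth functions is needed, and no convergence of the $\Gamma$-series in the generalized-function sense has to be justified before the fact --- it simply is a Fourier series of an $L^2$ function. You instead run a mollifier scheme: approximate $b$ by $b_n=\chi_n*b$, apply classical Poisson summation to $b_n$, and pass to the limit on both sides, controlling the right side by dominated convergence against a test function and the left side by local analyticity of $b$ inside each (translated) alcove. This works, and it is a more ``generic'' argument that does not exploit periodicity so cleanly, but it costs you the careful interchange of limits that you flag as the main obstacle --- precisely the step the paper sidesteps by passing to $V/\Lambda$ first. One small point worth spelling out in your version: elements of $\CS_r^{\y}$ may contain singular (delta-type) pieces supported on the translated walls; the mollifier limit $b_n(\lambda+v)\to b(\lambda+v)$ for $v\in V_{reg,r}$ needs the observation that the singular support of $b$ is disjoint from $V_{reg,r}+\Lambda$, so for $n$ large the bump around $\lambda+v$ misses it entirely. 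You implicitly use this but do not state it. The paper avoids the issue altogether by proving the base case for the genuine $L^1_{loc}$ function $b_r(\Phi,\y)$ and only then applying $D$.
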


\begin{proof}
We first prove the wanted formula for the function $b(v):=b_r(\Phi,\y)(v)$ itself.
Consider
$$F(x,v)=\sum_{\lambda\in \Lambda}  b(v+\lambda)e^{i\ll x,v+\lambda\rr}.$$

The function $F(x,v)$   is analytic in $x$ and is a function of $v$ modulo $\Lambda$.
Furthermore this function of $v$ is piecewise analytic (in $PW^{\omega}_r$), as it is a sum of a finite number of translates of $b(v)e^{i\ll x,v\rr}$. It thus defines an $L^2$ function on $V/\Lambda$.
We  form its Fourier series (in $v$)
and obtain in $L^2(V/\Lambda)$ the equality
$$F(x,v)=\sum_{\gamma\in \Gamma} a_\gamma e^{2i\pi\ll v,\gamma\rr}.$$
The coefficient $a_\gamma$ is
$\int_{v\in V/\Lambda}F(x,v) e^{-2i\pi\ll v,\gamma\rr}.$
Rewriting $F$ as a sum over $\Lambda$, we obtain
$a_\gamma=\hat b(x-2i\pi \gamma)$.
Thus we obtain the wanted formula  as an equality of $L^2$ functions of $v$.
As the first member is  analytic on $V_{reg,r}$,
 we obtain our equality everywhere on $V_{reg,r}$.

We can now derivate this equality on $V_{reg,r}\times U$ with respect to constant coefficient operators.
We obtain Theorem \ref{theospline}.

\bigskip

Let us apply Theorem \ref{theospline} to obtain an equivalent formulation of
Theorem \ref{theo:deconvtrans}. We follow the notations of Theorem \ref{theo:deconvtrans}.

Let $v\in V_{reg,r}$.
We  compute
$$W([q],v,x,\y)= \sum_{\lambda\in \Lambda} (\sum_{s\in \CV} s^{-\lambda}m_s([q],\r,\y)(\lambda+v))e^{i\ll \lambda,x\rr}.$$

Theorem \ref{theo:deconvtrans} is equivalent to the fact that for
$r$ in the zonotope, any $\epsilon$ generic in the cone $C(\Phi,r)$,  $\lim_{t>0,t\to 0} W([1],t\epsilon,x,\y)$ is identically equal to $1$.

For each $s\in \CV$, we choose a  representative $S\in U$. We denote this set of representatives still by $\CV$. Then
$$W([q],v,x,\y)=\sum_{S\in \CV} \sum_{\lambda\in \Lambda} m_s([q],\r,\y)(\lambda+v) e^{i\ll \lambda,x-S\rr}.$$

The series $m_s([q],\r,\y)\in \CS_r^{\y}[[q]]$ and
the Fourier transform of $m_s([q],\r,\y)$ is
$F_s([q],\r,x,\y).$
We can apply Poisson formula (Theorem \ref{theospline}) for each coefficient of $q^a$ in the series $m_s([q],\r,\y)$. We obtain

$$W([q],v,x,\y)=\sum_{S\in \CV} \sum_{\gamma\in \Gamma}F_s([q],\r,x-S-2\pi \gamma,\y) e^{-i\ll v,x-S-2\pi \gamma\rr}.$$

 Now $F_s([q],\r,x-S-2\pi \gamma,\y)$ is equal to
$$q^{|\Phi(s)|}e^{i([q]-1)(\ll r,x-S-2\pi \gamma\rr+\ll \r,\y\rr)}\left(\prod_{k=1}^N\frac{e^{i(\langle \alpha_k,x-S-2\pi \gamma\rangle+y_k)}s^{\alpha_k}-1}{e^{i[q](\langle \alpha_k,x-S-2\pi \gamma\rr+y_k)}s^{\alpha_k}-1}\right)$$
$$=q^{|\Phi(s)|}e^{i([q]-1)(\ll r,x-S-2\pi \gamma\rr+\ll \r,\y\rr)}
\left(\prod_{k=1}^N\frac{e^{i(\langle \alpha_k,x\rangle+y_k)}-1}{e^{i[q](\langle \alpha_k,x-S-2\pi\gamma\rr+y_k)}s^{\alpha_k}-1}\right)$$
$$=Z_s([q],\r, S+2\pi \gamma)(x,\y).$$
Here we used the fact that $e^{-i\langle \alpha_k,S+2\pi\gamma\rr}=s^{-\alpha_k}$.

 Now, the set $\{S+2\pi\gamma,\gamma\in \Gamma, S\in \mathcal V\}$ is exactly the set $\hat \CV$.
 We obtain for $v$ regular,
$$W([q],v,x,\y)=\sum_{w\in \hat \CV}Z([q],\r,w)(\y,x)e^{i\ll v,w\rr} e^{-i\ll v,x\rr}=e^{-i\ll v,x\rr}\CB([q],\r)(v)(x,\y).$$
Thus Theorem \ref{delicate}  implies Theorem \ref{theo:deconvtrans}.
\end{proof}

\section{A delicate formula}

Let $\beta\in V$.  Denote
 $$\theta(\beta)(x,y)= \frac{1}{e^{i(\ll\beta,x\rr+y)}-1},$$
 a meromorphic funtion of $(x,y)\in U_\C\times \C$.
For our proof by induction, we will use the following formula
which can be immediately verified.

Let $\beta_1,\ldots,\beta_p$ be elements of $V$ such that $\sum_{i=1}^p \beta_i=0$.
Then, we have the identity of meromorphic functions of $(x,y_1,\ldots,y_p)\in U_\C\times \C^p$,
\begin{equation}\label{eqobvious}
(e^{\sum_{j=1}^py_j}-1)\prod_{k=1}^p \theta(\beta_k)(x,y_k)
=\sum_{h=1}^p (-1)^{h-1}\prod_{1\leq j<h}\theta(-\beta_j)(x,-y_j) \prod_{h<j\leq p}
\theta(\beta_j)(x,y_j).
\end{equation}

We now prove Theorem \ref{delicate}, or rather Theorem \ref{theo;thetalim}, by induction on the number of elements in $\Phi$.
As we will need to use several  systems $\Psi$, we denote now the function $\Theta(\r,x,\y)$ by
 $$\Theta(\Phi,\r, x,\y)=\frac{e^{i(\ll r,x\rr+\ll \r,\y\rr)}}{\prod_{k=1}^N (e^{i(\langle \alpha_k,x \rangle+y_k)}-1)}.$$

Similarly, in all other objects depending on $\Phi$ introduced before, we will add the notation $\Phi$.

We first prove some technical lemma on growth, that we needed to define  the series of generalized functions  $\CB(\Phi,[q],\r)(v)$.

Let $Z(\Phi,[q],\r, w)=\sum_{a=0}^{\infty}q^a z_a(w,x,\y)$ be the Taylor expansion at $q=0$  of
 $Z(\Phi,q,\r, w)$.
Let us show that the functions $z_a(w,x,\y)$ are holomorphic in $w,x,\y$, and that the growth in $w$  is at most polynomial when $w$ varies in a coset of $2\pi \Gamma$ and $(x,\y)$ varies in a compact set of $U_\C\times \C^N$.

\begin{lemma}\label{lemtechnical}
Let $c\in U$ and  $W_c=c+2\pi \Gamma$ be a coset of $2\pi \Gamma$.
Then  for $(x,\y)$ varying in a compact subset of $U_\C\times \C^N$, the function $z_a(w,x,\y)$ is of at most polynomial growth in $w\in W_c$.
\end{lemma}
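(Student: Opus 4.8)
The plan is to analyze the building block $Z(\Phi,q,\r,w)(x,\y)$ directly. Recall
$$
Z(\Phi,q,\r,w)(x,\y)=q^{|\Phi(\overline w)|}e^{i(q-1)(\ll r,x-w\rr+\ll\r,\y\rr)}\prod_{k=1}^N\frac{e^{i(\ll\alpha_k,x\rr+y_k)}-1}{e^{iq(\ll\alpha_k,x-w\rr+y_k)}e^{i\ll\alpha_k,w\rr}-1}.
$$
The key observation is that a factor in the denominator $e^{iq(\ll\alpha_k,x-w\rr+y_k)}e^{i\ll\alpha_k,w\rr}-1$ vanishes at $q=0$ precisely when $e^{i\ll\alpha_k,w\rr}=1$, i.e. when $\alpha_k\in\Phi(\overline w)$; for those $k$ we have a simple zero at $q=0$ of the form $iq(\ll\alpha_k,x-w\rr+y_k)+O(q^2)$, and there are exactly $|\Phi(\overline w)|$ of them. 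So the prefactor $q^{|\Phi(\overline w)|}$ exactly cancels these zeros, and $Z$ is holomorphic in $q$ near $q=0$ (for $(x,\y)$ in a small enough neighborhood so none of the surviving denominators vanish). Hence the Taylor coefficients $z_a(w,x,\y)$ are well-defined holomorphic functions of $(x,\y)$.

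First I would split the product according to whether $\alpha_k\in\Phi(\overline w)$ or not. For $\alpha_k\notin\Phi(\overline w)$ write $\zeta_k=e^{i\ll\alpha_k,w\rr}\neq 1$; the factor
$$
\frac{e^{i(\ll\alpha_k,x\rr+y_k)}-1}{\zeta_k e^{iq(\ll\alpha_k,x-w\rr+y_k)}-1}
$$
is, for $w$ ranging over the coset $W_c$, a ratio whose denominator stays bounded away from $0$ in a neighborhood of $q=0$ uniform over $W_c$ (only finitely many values of $\zeta_k$ occur on a coset, and none is $1$ for these $k$ once we note $\Phi(\overline w)$ depends only on $\overline w$, hence only on $c$), and whose numerator and the exponential $e^{-iq\ll\alpha_k,w\rr}$ in its expansion contribute at most polynomial growth in $w$ (the numerator is bounded, and expanding $e^{iq(\ll\alpha_k,x-w\rr+y_k)}$ in $q$ produces powers of $\ll\alpha_k,w\rr$). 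For $\alpha_k\in\Phi(\overline w)$, after cancelling one power of $q$ the factor becomes $\frac{(e^{i(\ll\alpha_k,x\rr+y_k)}-1)\,q}{\zeta_k e^{iq(\ll\alpha_k,x-w\rr+y_k)}-1}$ with $\zeta_k=1$, which near $q=0$ equals $\frac{e^{i(\ll\alpha_k,x\rr+y_k)}-1}{i(\ll\alpha_k,x-w\rr+y_k)}\cdot\frac{iq(\ll\alpha_k,x-w\rr+y_k)}{e^{iq(\ll\alpha_k,x-w\rr+y_k)}-1}$; here $\frac{z}{e^z-1}$ is entire, so this is a power series in $q$ whose coefficients are rational in $\ll\alpha_k,x-w\rr+y_k$ with the single pole at $\ll\alpha_k,w\rr\in 2\pi\Z$ times... — but since $\ll\alpha_k,w\rr\in 2\pi\Z$ exactly for $\alpha_k\in\Phi(\overline w)$ and $w\in W_c$, this "pole" factor $\frac{1}{i(\ll\alpha_k,x-w\rr+y_k)}$ is bounded away from its pole uniformly on the coset (for $x$ in a small compact set) and contributes at most polynomial, in fact decaying, behavior. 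Finally the global prefactor $e^{i(q-1)(\ll r,x-w\rr+\ll\r,\y\rr)}$ at $q=0$ is $e^{-i(\ll r,x-w\rr+\ll\r,\y\rr)}$ times a power series in $q$ whose coefficients are polynomials in $w$; the modulus of $e^{i\ll r,w\rr}$ is bounded since $r$ is real and $w\in W_c$ shifts by $2\pi\Gamma$ with $\ll r,2\pi\gamma\rr$ real.

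Assembling: each $z_a(w,x,\y)$ is a finite sum (coming from distributing the $q^a$ coefficient across the factors) of products of (i) bounded analytic functions of $(x,\y)$, (ii) bounded functions of $w\in W_c$, and (iii) polynomials in the linear forms $\ll\alpha_k,w\rr$, hence is of polynomial growth in $w$ on the coset, uniformly for $(x,\y)$ in a compact subset of $U_\C\times\C^N$. I would organize this as: (1) reduce to the coset $W_c$, recording that $\Phi(\overline w)$ and the roots of unity $e^{i\ll\alpha_k,w\rr}$ are constant on it; (2) prove holomorphy in $(q,x,\y)$ near $q=0$ for $(x,\y)$ small via the zero-cancellation count above; (3) get the uniform-in-$w$ power series bound factor by factor; (4) combine by the product/Leibniz rule for Taylor coefficients. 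The main obstacle is the bookkeeping in step (3): making the "at most polynomial growth" genuinely uniform over the coset and over a compact set of $(x,\y)$ simultaneously, i.e. controlling the radius of convergence in $q$ independently of $w\in W_c$ — this works because, on a fixed coset, only finitely many distinct local pictures arise, but it must be stated carefully. Everything else is routine.
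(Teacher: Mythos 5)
Your overall strategy — split the product according to whether $\alpha_k\in\Phi(\overline w)$, expand each factor in $q$, observe polynomial-in-$w$ coefficients, and assemble via Leibniz — is exactly what the paper does, and most of it is fine. But there is a genuine gap in your treatment of the factors with $\alpha_k\in\Phi(\overline w)$.

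You write that factor, near $q=0$, as
$$
\frac{e^{i(\ll\alpha_k,x\rr+y_k)}-1}{i(\ll\alpha_k,x-w\rr+y_k)}\cdot\frac{iq(\ll\alpha_k,x-w\rr+y_k)}{e^{iq(\ll\alpha_k,x-w\rr+y_k)}-1},
$$
and then try to control the first factor by arguing that the ``pole factor'' $\frac{1}{i(\ll\alpha_k,x-w\rr+y_k)}$ is bounded away from its pole uniformly on the coset, for $x$ in a small compact set. That argument fails. On the coset $W_c$ the quantity $\ll\alpha_k,x-w\rr+y_k$ takes values $\ll\alpha_k,x\rr+y_k-\ll\alpha_k,c\rr-2\pi\ll\alpha_k,n\rr$ with $n\in\Gamma$; there are infinitely many $n\in\Gamma$ with $\ll\alpha_k,n\rr=0$, and for all of them the denominator is the same fixed number. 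Nothing prevents that number from being $0$ (indeed, near the origin it is $0$ when $(x,\y)=(0,0)$), so the denominator is \emph{not} uniformly bounded below, for small compact sets or otherwise.

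The missing observation — which the paper makes explicitly — is that precisely because $e^{i\ll\alpha_k,w\rr}=1$ for $\alpha_k\in\Phi(\overline w)$ and $w\in W_c$, the numerator $e^{i(\ll\alpha_k,x\rr+y_k)}-1$ equals $e^{i(\ll\alpha_k,x-w\rr+y_k)}-1$, so the first factor is the \emph{entire} function
$$
E\bigl(\ll\alpha_k,x-w\rr+y_k\bigr),\qquad E(z)=\frac{e^{iz}-1}{iz}=\int_0^1 e^{itz}\,dt.
$$
The integral representation gives $|E(z)|\le e^{|\mathrm{Im}\,z|}$, and since $w$ is real, $\mathrm{Im}(\ll\alpha_k,x-w\rr+y_k)=\mathrm{Im}(\ll\alpha_k,x\rr+y_k)$ is bounded as $(x,\y)$ ranges over a compact set, independently of $w\in W_c$. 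This is the step that makes the lemma true for an arbitrary compact set, not just a small one, and without it your bound on these factors does not go through. Once you insert this cancellation, the rest of your bookkeeping (bounded $u_k\neq 1$ denominators on the coset, polynomial-in-$w$ Taylor coefficients, the prefactor, Leibniz) matches the paper's proof.
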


\begin{proof}

Write $w=c+2\pi n$ which $n\in \Gamma$.
Then $u_k=e^{i\ll\alpha_k,w\rr}=e^{i\ll\alpha_k,c\rr}$ does not depend on $w$ when $w$ varies in $W_c$.

We write $$Z(\Phi,q,\r,w)(x,\y)=e^{i (q-1)(\ll r,x-w\rr+\ll \r,\y\rr)}\times$$
$$\prod_{k,e^{i^{\ll \alpha_k,w\rr}}\neq 1} \frac{e^{i(\langle \alpha_k,x\rangle +y_k)}-1}
{e^{iq(\langle \alpha_k,x-w\rangle +y_k)}e^{i\langle \alpha_k,w\rangle }-1}
\prod_{k,e^{i^{\ll \alpha_k,w\rr}}= 1}
\frac{qe^{i(\langle \alpha_k,x\rangle +y_k)}-1}
{e^{iq(\langle \alpha_k,x-w\rangle +y_k)}-1} .
$$

We analyze the dependance in $w$ of each factor.
 The expansion
 $e^{i ([q]-1)(\ll r,x-w\rr+\ll \r,\y\rr)}$
 of the first factor
is of the form $e^{i \ll r,w\rr}\sum_{a} q^a p_a(w,x,\y)$ where $p_a(w,x,\y)$ is polynomial in $w$ and analytic in $x,\y$.
Thus its growth is polynomial in $w$,  for $(x,\y)$ varying in a compact subset of $U_\C\times \C^N$.

Consider the factor
$$f_k(q,w,x,\y)=\frac{e^{i(\langle \alpha_k,x \rangle+y_k)}-1}{ e^{iq(\langle \alpha_k,x-w \rangle+y_k)}e^{i\ll \alpha_k,w\rr}-1}$$ associated to $\alpha_k$
with $u_k= e^{i\ll \alpha_k,w\rr }\neq 1$.

We  define, if $u\neq 1$, constants $\beta(a,u)$ so that we have Taylor expansion
$$\frac{1}{e^zu-1}=\sum_{\ell=0}^{\infty} \beta(\ell,u) z^\ell/\ell!.$$

Then
 the Taylor series of $f_k(q,w,x,\y)$ is $$(e^{i(\langle \alpha_k,x \rangle+y_k)}-1)\sum_{\ell=0}^{\infty}\beta(\ell,u_k)
(i(\ll \alpha_k,x-w\rr +y_k))^{\ell} q^{\ell}/\ell!$$
and the coefficient of $q^\ell$ gives rise to a  function of $(x,y,w)$, polynomial in $w$ and analytic in $(x,\y)$.

If  $e^{i\ll \alpha_k,w\rr }=1$, then we write
the factor as
$$\left(\frac{ e^{i(\langle \alpha_k,x-w \rangle+y_k)}-1}
{ i(\langle \alpha_k,x-w \rangle+y_k)}\right)\left(\frac{iq( \langle \alpha_k,x-w \rangle+y_k)}{e^{iq(\langle \alpha_k,x-w \rangle+y_k)}-1}\right).$$

If $E(z)=\frac{e^{iz}-1}{iz}$, the coefficient of $q^\ell$ is
$$E(\langle \alpha_k,x-w \rangle+y_k)
b(\ell)(i(\langle \alpha_k,x-w \rangle+y_k))^{\ell}/\ell!.$$
As  $E(z)=\int_{0}^1 e^{it z}dt $, we can bound uniformly
$E(\langle \alpha_k,x-w \rangle+y_k)$ when $(x,\y)$ varies in a compact subset of $U_\C\times \C^N$ and $w$ varies in  $U$. The other terms are polynomials in $(x,\y,w)$.
\end{proof}

Thus if $W$ is a finite union of cosets of $2\pi \Gamma$, we can consider the series
$$\sum_{w\in W} Z(\Phi,[q],\r, w)(x,\y) e^{i \ll v,w\rr}.$$

This is a series of generalized function of $v$, depending holomorphically on $(x,\y)$.

\begin{lemma}\label{lemtechnicalbis}
Let $c\in U$ and  $W_c=c+2\pi \Gamma$ be a coset of $2\pi \Gamma$.
If $c\notin \hat \CV(\Phi)$,
the generalized function
$$v\mapsto \sum_{w\in W_c} Z(\Phi,[q],\r,w) e^{i \ll v,w\rr}$$
vanishes on $V_{reg,r}(\Phi)$.
\end{lemma}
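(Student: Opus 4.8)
The plan is to expand the formal $q$-series term by term and to show that each term vanishes on $V_{reg,r}(\Phi)$. Write $Z(\Phi,[q],\r,w)(x,\y)=\sum_{a\ge 0}q^a z_a(w,x,\y)$ and set $D_a(v):=\sum_{w\in W_c}z_a(w,x,\y)e^{i\langle v,w\rangle}$, a generalized function of $v$ by Lemma~\ref{lemtechnical}. Since $\overline{w}=\overline{c}$ for every $w\in W_c$, the power $q^{|\Phi(\overline{w})|}$ is constant on the coset, and the computation in the proof of Lemma~\ref{lemtechnical} shows that, for $w\in W_c$,
\[
z_a(w,x,\y)=e^{i\langle r,w\rangle}\Big(\prod_{\alpha_k\in\Phi(\overline{c})}E(\langle\alpha_k,x-w\rangle+y_k)\Big)\,P_a(w,x,\y),
\]
where $E(z)=\frac{e^{iz}-1}{iz}=\int_0^1 e^{itz}\,dt$ and $P_a$ is polynomial in $w$ and holomorphic in $(x,\y)$: the first factor of $Z$ contributes $e^{i\langle r,w\rangle}$ times a power series in $q$ with coefficients polynomial in $w$ and holomorphic in $(x,\y)$, each factor attached to $\alpha_k\notin\Phi(\overline{c})$ contributes a power series of the same type, and, using $e^{-i\langle\alpha_k,w\rangle}=1$, the factor attached to $\alpha_k\in\Phi(\overline{c})$ equals $E(\langle\alpha_k,x-w\rangle+y_k)\,\frac{iq z_k}{e^{iq z_k}-1}$ with $z_k=\langle\alpha_k,x-w\rangle+y_k$, i.e. $E(\langle\alpha_k,x-w\rangle+y_k)$ times such a series. (Here $\prod_{\alpha_k\in\Phi(\overline{c})}E(\langle\alpha_k,x-w\rangle+y_k)=\hat B(\Phi(\overline{c}),\y)(x-w)$, the Fourier transform of the box spline of the sublist $\Phi(\overline{c})$, supported on the zonotope $Z(\Phi(\overline{c}))$.)

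Next I would insert $E(z)=\int_0^1 e^{itz}\,dt$. With $\beta(t)=\sum_{\alpha_k\in\Phi(\overline{c})}t_k\alpha_k\in\mathrm{span}\,\Phi(\overline{c})$ this gives
\[
D_a(v)=\int_{[0,1]^{\Phi(\overline{c})}} e^{i(\langle\beta(t),x\rangle+\sum_k t_k y_k)}\Big(\sum_{w\in W_c}P_a(w,x,\y)\,e^{i\langle v+r-\beta(t),\,w\rangle}\Big)\,\prod_k dt_k .
\]
Writing $w=c+2\pi n$ with $n\in\Gamma$, the inner sum equals $e^{i\langle v+r-\beta(t),c\rangle}\sum_{n\in\Gamma}P_a(c+2\pi n,x,\y)\,e^{2\pi i\langle v+r-\beta(t),n\rangle}$; since $P_a(c+2\pi n,x,\y)$ is polynomial in $n$, Poisson summation for the lattice $\Gamma$ identifies this last sum with a finite-order constant-coefficient differential operator (coefficients holomorphic in $(x,\y)$) applied to $\sum_{\lambda\in\Lambda}\delta^V_\lambda$, hence with a distribution in the variable $v+r-\beta(t)$ supported on $\Lambda$. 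Testing $D_a$ against $f\in C^\infty_c(V)$ and substituting $u=v+r-\beta(t)$ in the $v$-integral, $\langle D_a,f\rangle$ becomes an integral over $t\in[0,1]^{\Phi(\overline{c})}$ of a finite linear combination of derivatives of $u\mapsto f(u-r+\beta(t))$ evaluated at the points of $\Lambda$; in particular $\supp D_a\subset\bigcup_{\lambda\in\Lambda}\big(\lambda-r+Z(\Phi(\overline{c}))\big)$.

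Now the hypothesis enters: $c\notin\hat\CV(\Phi)$ means $\Phi(\overline{c})$ does not span $V$. Extending a basis of $\mathrm{span}\,\Phi(\overline{c})$ chosen inside $\Phi(\overline{c})$ to a basis of $V$ by elements of $\Phi$ (possible since $\Phi$ spans $V$) and deleting one of the added vectors, we obtain a hyperplane $H$ spanned by elements of $\Phi$ — a wall — with $Z(\Phi(\overline{c}))\subset\mathrm{span}\,\Phi(\overline{c})\subset H$. Hence $\bigcup_{\lambda\in\Lambda}\big(\lambda-r+Z(\Phi(\overline{c}))\big)\subset\bigcup_{\lambda\in\Lambda}\big((\lambda+H)-r\big)$, which is contained in the complement of $V_{reg,r}(\Phi)=V_{reg}(\Phi)-r$. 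Therefore each $D_a$, and with it the whole series $\sum_{w\in W_c}Z(\Phi,[q],\r,w)e^{i\langle v,w\rangle}=\sum_a q^a D_a$, vanishes on $V_{reg,r}(\Phi)$.

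The only delicate point is the legitimacy of interchanging the compact $t$-integral with the $\Gamma$-summation and with the pairing against test functions. This follows from the bound $|E(\zeta)|\le e^{|\mathrm{Im}\,\zeta|}$ together with the at-most-polynomial growth of $P_a(c+2\pi n,x,\y)$ in $n$, uniform for $(x,\y)$ on compacta — precisely the estimates established in Lemma~\ref{lemtechnical} — which make the $\Gamma$-sum converge in the sense of tempered distributions in $v$, uniformly in $t\in[0,1]^{\Phi(\overline{c})}$ and in $(x,\y)$ on compact sets, so that Fubini applies. I expect this bookkeeping to be the main (and essentially only) technical obstacle; the geometric content is merely that the ``smearing'' produced by the $E$-factors is the box-spline zonotope of the sublist $\Phi(\overline{c})$, which collapses into a wall exactly when $c\notin\hat\CV(\Phi)$.
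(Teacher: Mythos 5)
Your proof is correct, and it shares the paper's starting point but finishes by a genuinely different device. Both you and the paper begin by factoring $z_a(w,x,\y)$ as $e^{i\langle r,w\rangle}\cdot\bigl(\prod_{\alpha_k\in\Phi(\overline c)}E(\langle\alpha_k,x-w\rangle+y_k)\bigr)\cdot P_a(w,x,\y)$, with $P_a$ polynomial in $w$ and holomorphic in $(x,\y)$, and both use that $V_0=\mathrm{span}\,\Phi(\overline c)$ is a proper subspace contained in a wall. After that the paths diverge. The paper exploits that the $E$-factors depend on $w=c+2\pi n$ only through $n$ modulo $\Gamma_0:=\Gamma\cap V_0^{\perp}$, groups the sum by $\Gamma_0$-cosets, and performs Poisson only on the inner $\Gamma_0$-sum of the polynomial part; this places the support of $D_a$ directly in $V_0+\Lambda-r$. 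You instead unfold each $E$-factor via $E(z)=\int_0^1 e^{itz}\,dt$, apply Poisson summation over the full lattice $\Gamma$ at fixed $t\in[0,1]^{\Phi(\overline c)}$, and then take the union over $t$, landing on the sharper support $\Lambda-r+Z(\Phi(\overline c))\subset\Lambda-r+V_0$. Both then conclude from $V_0\subset H$ with $H$ a wall. Your variant buys a cleaner geometric picture — the $E$-product is exactly $\hat B(\Phi(\overline c),\y_0)(x-w)$ and the smearing is precisely by the degenerate zonotope, which collapses into a wall when $c\notin\hat\CV$ — at the cost of one extra Fubini/uniformity justification for interchanging the $t$-integral with the distributional $\Gamma$-sum, which you rightly flag and which is covered by the estimates of Lemma~\ref{lemtechnical}.
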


\begin{proof}
Write $w=c+2\pi n$, with $n\in \Gamma$.
If we look back to the proof of Lemma \ref{lemtechnical}, we obtain that
 $z_a(w,x,\y)$ is equal to a function of $f(n,x,\y)$ polynomial in $n$ and holomorphic in $(x,\y)$,
multiplied by  $$e^{i\ll r,w\rr}\prod_{k, e^{i \ll \alpha_k,c\rr}=1}E(\langle \alpha_k,x-w \rangle+y_k).$$ Our assumption is that the corresponding $\alpha_k$  span a proper subspace $V_0$ of $V$. This subspace is
 contained in a hyperplane generated by elements of $\Phi$.
 Write $\Gamma_0=\Gamma \cap V_0^{\perp}$. We perform the summation over $W$ of $z_a(w,x,\y)e^{i\ll v,w\rr}$ by summing on  cosets of $\Gamma_0$, then on
 $ \Gamma/\Gamma_0$. When $w=c+2\pi n_1+2\pi n_0$, the functions
$E(\langle \alpha_k,x-c-2\pi n \rangle+y_k)$ do not depend on $n_0$, thus our sum is
$\sum_{n_1\in \Gamma/\Gamma_0}\sum_{n_0\in \Gamma_0} p(n_0,n_1,x,\y) e^{i\ll v+r,c+2i\pi n_1+2i\pi n_0\rr}$, where $p(n_0,n_1,x,\y)$ is polynomial in $n_0$.
Thus the corresponding generalized function of $v$  is supported on  $V_0+\Lambda-r$, which is contained on translated affine walls.
\end{proof}

\begin{corollary}\label{corvanish}
Let $W$ be a finite union of cosets of $2\pi \Gamma$ containing $\hat \CV(\Phi)$.
Then on $V_{reg,r}(\Phi)$, we have
$$\CB(\Phi, [q],\r)(v)=\sum_{w\in W} Z(\Phi,[q],\r,w) e^{i \ll v,w\rr}.$$
\end{corollary}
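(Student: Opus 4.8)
The plan is to reduce everything to Lemmas \ref{lemtechnical} and \ref{lemtechnicalbis}, which are already proven: enlarging the index set of the defining sum for $\CB(\Phi,[q],\r)$ from $\hat\CV(\Phi)$ to $W$ only adds terms that are supported on translated affine walls, hence invisible on $V_{reg,r}(\Phi)$.

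First I would observe that $\hat\CV(\Phi)$, being the reciprocal image of the finite set $\CV$ under the projection $U\to T=U/2\pi\Gamma$ (Definition \ref{def:hatcv}), is $2\pi\Gamma$-invariant, so every coset of $2\pi\Gamma$ is either contained in $\hat\CV(\Phi)$ or disjoint from it. Since by hypothesis $W$ is a finite union of cosets of $2\pi\Gamma$ containing $\hat\CV(\Phi)$, it follows that $W$ is the disjoint union of $\hat\CV(\Phi)$ together with finitely many cosets $W_{c_1},\dots,W_{c_m}$, each $c_j$ lying outside $\hat\CV(\Phi)$. By Lemma \ref{lemtechnical}, each of the sums $\sum_{w\in W_{c_j}}Z(\Phi,[q],\r,w)e^{i\langle v,w\rangle}$, as well as $\CB(\Phi,[q],\r)(v)=\sum_{w\in\hat\CV}Z(\Phi,[q],\r,w)e^{i\langle v,w\rangle}$, is a well-defined generalized function of $v$ with coefficients holomorphic in $(x,\y)$, computed coefficientwise in the formal variable $q$; hence
\[
\sum_{w\in W}Z(\Phi,[q],\r,w)(x,\y)\,e^{i\langle v,w\rangle}=\CB(\Phi,[q],\r)(v)+\sum_{j=1}^{m}\ \sum_{w\in W_{c_j}}Z(\Phi,[q],\r,w)(x,\y)\,e^{i\langle v,w\rangle}
\]
is a legitimate identity of such generalized functions.

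Then I would apply Lemma \ref{lemtechnicalbis} to each coset $W_{c_j}$: since $c_j\notin\hat\CV(\Phi)$, the generalized function $v\mapsto\sum_{w\in W_{c_j}}Z(\Phi,[q],\r,w)e^{i\langle v,w\rangle}$ vanishes on $V_{reg,r}(\Phi)$. Restricting the displayed identity to $V_{reg,r}(\Phi)$ and discarding the vanishing terms yields the asserted equality.

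I do not expect any genuine obstacle here; all the real work sits in Lemmas \ref{lemtechnical} and \ref{lemtechnicalbis}. The only point worth spelling out is the $2\pi\Gamma$-invariance of $\hat\CV(\Phi)$, which legitimizes the coset decomposition of $W$ and thus the bookkeeping above; everything else is a rearrangement of series that converge in the generalized-function sense, coefficientwise in $q$, and which is justified by the polynomial growth estimate of Lemma \ref{lemtechnical}.
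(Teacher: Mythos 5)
Your proof is correct and takes essentially the same route as the paper: decompose $W$ into $\hat\CV(\Phi)$ plus finitely many extra cosets, invoke Lemma \ref{lemtechnical} to make the series rearrangement legitimate, and invoke Lemma \ref{lemtechnicalbis} to kill the extra cosets on $V_{reg,r}(\Phi)$. The only addition you make is to spell out the $2\pi\Gamma$-invariance of $\hat\CV(\Phi)$ justifying the coset bookkeeping, which the paper leaves implicit.
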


\begin{proof}
By definition (Definition \ref{defCB}), $\CB(\Phi, [q],\r)(v)$ is the sum over $\hat \CV(\Phi)$.
By Lemma \ref{lemtechnicalbis}, the restriction to $V_{reg,r}(\Phi)$ of the sum over the other cosets in $W\setminus {\hat \CV}(\Phi)$ vanishes on $V_{reg,r}(\Phi)$.
\end{proof}

\bigskip

We now prove Theorem \ref{theo;thetalim}.

Recall that
 $$Q(\Phi, q,\r,w,x,\y)=q^{|\Phi(\overline{w})|} \frac{e^{iq(\ll r,x-w\rr+\ll \r,\y\rr)}e^{i\ll r,w\rr}}
{\prod_{k=1}^N(e^{iq(\langle \alpha_k,x-w\rangle +y_k)}e^{i\langle \alpha_k,w\rangle}-1)}$$$$= q^{|\Phi(\overline{w})|}\Theta(\Phi,\r, qx+(1-q)w,q\y).
$$

We compute $\CT(\Phi,[q])(v)(x,\y)=\sum_{w\in \hat \CV(\Phi)} Q(\Phi,[q],w,x,\y)e^{i\ll v,w\rr}$.

First, it is equivalent to prove Theorem
 \ref{theo;thetalim} for
 $\Phi=[\alpha_1,\alpha_2,\ldots, \alpha_N]$ or for
 $\Phi_u=[u_1\alpha_1,u_2\alpha_2,\ldots, u_N\alpha_N]$
 with $u_i=\pm 1$.

For example, let $\Phi'=[-\alpha_1,\alpha_2,\ldots, \alpha_N]$.
If $r\in Z(\Phi)$, then $r'=r-\alpha_1\in Z(\Phi')$, and
$Cone(r,\Phi)=Cone(r',\Phi')$. Indeed if $r+t\epsilon\in Z(\Phi)$ for $t$ small, then $r-\alpha_1+t\epsilon\in Z(\Phi)-\alpha_1=Z(\Phi')$.
 The sets $\hat \CV(\Phi)$ and $\hat \CV(\Phi')$ are equal.  The sets  $V_{reg,r}(\Phi)$ and $V_{reg,r'}(\Phi')$ are equal.

Consider a sequence  $\r$  of  $N$ real numbers, which is a $\Phi$-representation of $r$,  that is
$r=r_1\alpha_1+\cdots+ r_N\alpha_N$.
Then $\r'=[1-r_1,r_2,\ldots, r_N]$
 is a $\Phi'$-representation of  $r'=r-\alpha_1$.
Let $\y=[y_1,y_2,\ldots, y_N]$ and $\y'=[-y_1,y_2,\ldots, y_N]$.
Then $\ll \r,\y\rr= \ll \r',\y'\rr+y_1$.

Using the relation
$e^{z}\frac{1}{e^z-1}=-\frac{1}{e^{-z}-1}$,
we see that
$$\Theta(\Phi,\r,x,\y)=-\Theta(\Phi',\r',x,\y')$$
and consequently
$$Q(\Phi,q,\r,w,x,\y)=-Q(\Phi',q,\r',w,x,\y'),$$
$$\CT(\Phi,[q],\r)(v)(x,\y)=-\CT(\Phi',[q],\r')(v)(x,\y').$$

Thus, if Theorem \ref{theo;thetalim} is true for $\Phi'$,  we obtain  if $r\in Z(\Phi)$ and $\epsilon\in Cone(r,\Phi)$,
$$\lim_{t>0, t\to 0}\CT(\Phi,q,\r)(t\epsilon)(x,\y)|_{q=1}=
- \Theta(\Phi',\r',x,\y')=\Theta(\Phi,\r,x,\y).$$

\bigskip

Let ${\bf M}=[M_1,M_2,\ldots, M_N]$ be a sequence of positive integers, and let  $\Phi_{\bf M}=[M_1\alpha_1,M_2\alpha_2,\ldots, M_N\alpha_N].$

Similarly, let us see that if Theorem \ref{theo;thetalim} is true for
$\Phi_{\bf M}$, then Theorem \ref{theo;thetalim}  is true for $\Phi$.
For example, let
$$\Phi_M=[M\alpha_1,\alpha_2,\ldots, \alpha_N].$$
Let
$\y_M=[My_1,y_2,\ldots, y_N]$.
If $\r=[r_1,r_2,\ldots, r_N]$ is a  $\Phi$-representation of $r$ for $\Phi$, then
 $\r_d=[(r_1+d)/M,r_2,\ldots, r_N]$
is a $\Phi_M$-representation of $r+d\alpha_1$.
We use $$\frac{1}{e^{z}-1}=\frac{1+e^{z}+\cdots+e^{(M-1)z}}{e^{Mz}-1}.$$
Then
\begin{equation}\label{eqtrue}
\Theta(\Phi,\r,x,\y)=\sum_{d=0}^{M-1}
\Theta(\Phi_M,\r_d,x,\y_M).
\end{equation}

From Equation (\ref{eqtrue}),
$$Q(\Phi,q,\r,w,x,\y)=\sum_{d=0}^{M-1}
Q(\Phi_M,q,\r_d,w,x,\y_M).$$

Assume that $r\in Z(\Phi)$, $\epsilon\in Cone(r,\Phi)$.
Then for  $d=0,\ldots,M-1$, $r+d\alpha_1\in Z(\Phi_M)$ and $\epsilon\in Cone(r+d\alpha_1,\Phi_M)$.
By Corollary \ref{corvanish},
we can compute  the series $\CT(\Phi,[q],\r)$ by summing over the set $\hat \CV(\Phi_M)$ which contains $\hat \CV(\Phi)$.
Then we obtain on $V_{reg,r}(\Phi)=V_{reg,r}(\Phi_M)$
$$\CT(\Phi,[q],\r)(v)(x,\y)=
\sum_{d=0}^{M-1}
\CT(\Phi_M,[q],\r_d)(v)(x,\y_M).$$

Taking limits and using Equation (\ref{eqtrue}) , we obtain Theorem \ref{theo;thetalim}  for $\Phi$, if we have proven Theorem \ref{theo;thetalim} for $\Phi_M$.

\bigskip

We are now  ready to proceed on our induction.
If the number $N$ of elements of $\Phi$ is equal to the dimension of $V$,
then $\alpha_1,\alpha_2,\ldots, \alpha_N$ form a basis of $V$.
We may take as lattice containing the elements $\alpha_k$, the lattice with basis $\alpha_k$, and
we are reduced to the calculation in dimension $1$ that we have already done  at the end of  Section \ref{secillumin}.

If $N>\dim(V)$,
let us consider a relation
between elements of $\Phi$. We may assume after eventual  relabeling and  changing signs  that the relation is of the form
$\sum_{k=1}^p M_k\alpha_k=0$, with $M_k$ positive number.

It is sufficient to prove Theorem \ref{theo;thetalim}
for $\Phi_{\bf M}=[M_1\alpha_1,\ldots, M_p \alpha_p,\alpha_{p+1},\ldots, \alpha_N]$.
  Renaming the list, we are reduced  to prove
Theorem \ref{theo;thetalim} for a system $\Psi=[\beta_1,\beta_2,\ldots, \beta_N]$ with
a relation
$\sum_{k=1}^p \beta_k=0$.

We will prove the identity of  Theorem \ref{delicate} for $\Psi$, when $\y$  is small and outside the
hyperplane $\sum_{k=1}^py_k=0$ in $\C^N$.
As, after multiplying by $\prod_{k=1}^N (e^{i(\ll\beta_k,x\rr+y_k)}-1)$,
the identity to be proven is analytic in $(x,\y)$, this will be sufficient.

We assume $r$ in the zonotope $Z(\Psi)$ and
we choose a representation $r=\sum_{k=1}^N r_k \beta_k$ with $0\leq r_k\leq 1$. Similarly we can represent $\epsilon\in Cone(r,\Phi)$ as
$\epsilon=\sum_{k=1}^N s_k \beta_k$ with $s_k\geq 0$, if $r_k=0$, and $s_k\leq 0$ if $r_k=1$.
In this case the curve $r+t \epsilon$ stays in $Z(\Phi)$ when $t>0$ and small.

We relabel eventually the first $p$ elements of $\Psi$ so that  the sequence $[r_1+ts_1,\ldots, r_p+ts_p]$ is weakly increasing when $t$ is small and positive.
That is $$r_1\leq  r_2\leq \cdots \leq  r_p$$
and if $r_{u}=r_{u+1}$, we take an order so that $s_{u}\leq s_{u+1}$.

Define
$$\Psi_1=[\beta_2,\ldots, \beta_p,\beta_{p+1},\ldots, \beta_{N}]$$
$$\Psi_2=[-\beta_1,\beta_3,\ldots, \beta_p,\beta_{p+1},\ldots, \beta_{N}]$$
$$\cdots$$
$$\Psi_p=[-\beta_1,-\beta_2,\ldots, -\beta_{p-1},\beta_{p+1},\ldots,\beta_N].$$
Systems $\Psi_i$ have $N-1$ elements.

Define for $1\leq h\leq p$,
$$\r_h=[(r_h-r_1),\ldots, (r_{h}-r_{h-1}),(r_{h+1}-r_h),\ldots,(r_p-r_h),r_{p+1},\ldots, r_N],$$
$$\y_h=[-y_1,\ldots, -y_{h-1},y_{h+1},\ldots,y_{p}, y_{p+1},\ldots, y_{N}]$$

Then $\r_h$ is a $\Psi_h$-representation of $r$.

The following is the crucial proposition. It is taken from (\cite{SZ}, Lemma 1.8).
\begin{proposition}

Let $r\in Z(\Psi)$ and $\epsilon\in Cone(r,\Psi)$.
Then

$\bullet$
The vector $\epsilon$ is generic for $\Psi_h$,
$r\in Z(\Psi_h)$ and
$\epsilon\in Cone(r,\Psi_h)$ for $1\leq h\leq p$.

$\bullet$

\begin{equation}\label{eqcrucial}
\Theta(\Psi,\r, x,\y)=
\sum_{h=1}^pc_h(\r,\y)\Theta(\Psi_h,\r_h)(x,\y_h)
\end{equation}
with
$c_h(\y,\r)=(-1)^{h+1}\frac{e^{ir_h(\sum_{j=1, j\neq h}^p y_j)}}{e^{i(\sum_{j=1}^p y_j)}-1}.$

\end{proposition}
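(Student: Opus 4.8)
The plan is to treat the two bullets separately, deriving the second (the identity~\eqref{eqcrucial}) from the elementary meromorphic identity~\eqref{eqobvious}, and deriving the first (the geometric statements about $\Psi_h$) directly from the combinatorial description of zonotopes and their tangent cones. I would begin with the algebraic bullet. Recall $\Theta(\Psi,\r,x,\y)=\prod_{k=1}^N\theta(\beta_k)(x,\ll\beta_k,x\rr\text{-absorbed};y_k)$, or more precisely $\Theta(\Psi,\r,x,\y)=e^{i(\ll r,x\rr+\ll\r,\y\rr)}\prod_{k=1}^N\theta(\beta_k)(x,y_k)$ after writing each factor as $e^{ir_k(\ll\beta_k,x\rr+y_k)}\theta(\beta_k)(x,y_k)$. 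Since $\sum_{k=1}^p\beta_k=0$, I would apply~\eqref{eqobvious} with $p$ replaced by our $p$ and $y_j$ replaced by $\ll\beta_j,x\rr+y_j$ (the sum of these over $j\le p$ being $\sum_{j\le p}y_j$, as the $\beta_j$-part telescopes to $0$). This expands the factor $(e^{i\sum_{j\le p}y_j}-1)\prod_{k\le p}\theta(\beta_k)$ into a signed sum over $h$ of products $\prod_{j<h}\theta(-\beta_j)(x,-y_j)\prod_{h<j\le p}\theta(\beta_j)(x,y_j)$, which is precisely the product of $\theta$-factors attached to the list $\Psi_h$. Multiplying back by $\prod_{j>p}\theta(\beta_j)(x,y_j)$ and by the exponential prefactors, and carefully collecting the $e^{ir_k(\cdots)}$ terms, one should land on $\sum_h c_h\,\Theta(\Psi_h,\r_h)(x,\y_h)$; the bookkeeping of the exponential prefactor is what produces the constant $c_h(\r,\y)=(-1)^{h+1}e^{ir_h(\sum_{j\ne h,j\le p}y_j)}/(e^{i\sum_{j\le p}y_j}-1)$. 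The main subtlety here is to check that the $r_k$-dependent exponentials attached to indices $j<h$ (where $\beta_j$ is replaced by $-\beta_j$ and $y_j$ by $-y_j$) reorganize exactly into the shifted representation $\r_h$; this is a finite but slightly delicate sign-and-index computation, and it is where I expect to spend the most care.

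For the first bullet I would argue purely combinatorially. Genericity of $\epsilon$ for $\Psi_h$: the walls of $\Psi_h$ are hyperplanes spanned by subsets of $\{\pm\beta_1,\dots,\pm\beta_{h-1},\beta_{h+1},\dots,\beta_N\}$; each such hyperplane is also a wall of $\Psi$ (sign changes do not alter the span), so a vector generic for $\Psi$ is a fortiori generic for $\Psi_h$. For $r\in Z(\Psi_h)$ and $\epsilon\in Cone(r,\Psi_h)$: since $\Psi_h$ is obtained from $\Psi$ by deleting $\beta_h$ and replacing $\beta_1,\dots,\beta_{h-1}$ by their negatives, one has $Z(\Psi_h)=Z(\Psi)-(\beta_1+\dots+\beta_{h-1})-[0,1]\beta_h+\text{(relabel)}$; using the relation $\beta_1+\dots+\beta_p=0$ one rewrites $\beta_1+\dots+\beta_{h-1}=-(\beta_h+\dots+\beta_p)$, and the chosen ordering $r_1\le\cdots\le r_p$ together with the representation $r=\sum r_k\beta_k$ shows that the same point $r$ admits the representation $\r_h$ with all coordinates in $[0,1]$, hence $r\in Z(\Psi_h)$. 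The tangent-cone statement then follows because the admissible perturbation directions at $r$ (increase the $k$-th coordinate if $r_k=0$, decrease it if $r_k=1$) are preserved under the passage to $\r_h$, precisely because the ordering was chosen so that $[r_1+ts_1,\dots,r_p+ts_p]$ is weakly increasing for small $t>0$; this is exactly the content of Lemma~1.8 of~\cite{SZ}, which we are entitled to cite.

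Finally I would assemble the two bullets: given~\eqref{eqcrucial} and the fact that each $\Psi_h$ has $N-1$ elements with $r\in Z(\Psi_h)$, $\epsilon\in Cone(r,\Psi_h)$, the inductive hypothesis applies to each $\CT(\Psi_h,[q],\r_h)$, and summing the corresponding limits weighted by $c_h(\r,\y)$ against~\eqref{eqcrucial} (valid for $\y$ off the hyperplane $\sum_{j\le p}y_j=0$, where the $c_h$ are finite) will give the desired limit for $\CT(\Psi,[q],\r)$. The step I expect to be the genuine obstacle is not this last assembly but verifying that the coset sums carry through: one must know that $\CT(\Psi,[q],\r)$ and $\sum_h c_h\CT(\Psi_h,[q],\r_h)$ can be computed by summing over a common lattice of periods — here Corollary~\ref{corvanish} is the tool, since $\hat\CV(\Psi)$ and each $\hat\CV(\Psi_h)$ are contained in a common finite union of cosets of $2\pi\Gamma$ and the extra cosets contribute nothing on $V_{reg,r}$ — and that taking the limit $t\to0^+$ commutes with the (finite, once restricted to $V_{reg,r}$) sum. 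I would isolate these period-lattice and limit-interchange points as the technical heart, deferring the routine expansion of~\eqref{eqobvious} to a direct verification.
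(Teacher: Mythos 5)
Your plan matches the paper's proof essentially step for step: the geometric bullet is handled exactly as in the paper (genericity is immediate since $\Psi_h$-walls are $\Psi$-walls; $r\in Z(\Psi_h)$ follows from the explicit $\Psi_h$-representation $\r_h$ having entries in $[0,1]$ by the chosen ordering $r_1\le\cdots\le r_p$; the tangent-cone claim uses the refined tie-breaking by the $s_k$, as in Lemma~1.8 of \cite{SZ}), and the algebraic bullet is obtained from~\eqref{eqobvious}, which is precisely~\eqref{eqcrucial} at $\r=0$ once the factors with $j>p$ are carried along, with the constant $c_h(\r,\y)$ emerging from the bookkeeping of the exponential prefactor via the identity $\sum_{j\neq h,\,j\le p} y_j(r_j-r_h)=\ll\y,\r\rr-r_h\sum_{j\neq h,\,j\le p}y_j$. (Your aside about ``substituting'' $y_j\mapsto\ll\beta_j,x\rr+y_j$ into~\eqref{eqobvious} is unnecessary, as~\eqref{eqobvious} already has the $\beta_j$ built into the $\theta$-factors, and the final paragraph about coset sums and limit interchange concerns the subsequent induction step rather than the Proposition itself, but neither affects the correctness of the plan.)
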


\begin{proof}

It is clear that if $\epsilon$ is generic for $\Psi$, it is generic for the smaller system $\Psi_h$.

We have
$r=\sum_{k=1,k\neq h}^p (r_h-r_{k})(-\beta_k)+
\sum_{k=h+1}^p (r_k-r_{h})(\beta_k)$.
We have
$0\leq (r_h-r_{k})\leq 1$ for $k<h$, and similarly
 $0\leq (r_k-r_h)\leq 1$ for $h<k\leq p$.
 Thus $r$ belongs to the zonotope $Z(\Psi_h)$.
 Similarly, our choice of order implies that  the curve $r(t)=r+t \epsilon$ stays in $Z(\Psi_h)$.

Equality (\ref{eqcrucial}) follows from
Equality(\ref{eqobvious}) which is
Equality (\ref{eqcrucial}) for $\r=0$.
We just multiply  by $e^{i(\ll r,x\rr+\ll \r,\y\rr)}$ and remark that $$\sum_{j\neq h}y_j(r_j-r_h)=\ll \y,\r\rr -r_h \sum_{j\neq h}y_j.$$

\end{proof}

Equation (\ref{eqcrucial}) implies that
$$Q(\Psi,q,\r,w, x,\y)=
\sum_{h=1}^pc_h(\r,q\y)Q(\Psi_h,q,\r_h,w,x,\y_h).$$

Remark that the functions $\y\to c_h(\r,q\y)$ are defined if $|q|<2$ and $\y$ sufficiently small and outside the hyperplane
$\sum_{k=1}^p y_k=0.$

Taking Taylor series, we obtain
$$Q(\Psi,[q],\r,w,x,\y)=
\sum_{h=1}^{p+q} c_h(\r,[q]\y)Q(\Phi_h,[q],\r_h,w,x,\y_h).$$

We sum over the set $\hat \CV(\Psi)$ which contains the sets
$\hat \CV(\Psi_h)$. So we obtain over $V_{reg,r}(\Psi)$ (contained in $V_{reg,r}(\Psi_h)$):

$$\CT(\Psi,[q],\r)(v)(x,\y)=
\sum_{h=1}^{p} c_h(\r,[q]\y)\CT(\Psi_h,[q],\r_h)(v)(x,\y_h).$$

So for $\epsilon$ generic, we obtain
$$\lim_{t>0,t\to 0}\CT(\Psi,[q],\r)(t\epsilon)(x,\y)=
\sum_{h=1}^{p} c_h(\r,[q]\y)\lim_{t>0,t\to 0}\CT(\Psi_h,[q],\r_h)(t\epsilon)(x,\y_h).$$
 When $\y$ is sufficiently small, the Taylor series  of
$c_h(\r,[q]\y)$ converges for $q=1$ to  $c_h(\r,\y)$.
By induction hypothesis, if $r\in Z(\Psi)$ and $\epsilon\in Cone(r,\Psi)$, and our crucial proposition,
the limit converges for $q=1$ to
$$\sum_{h=1}^{p} c_h(\r,\y)\Theta(\Psi_h,\r_h,x,\y_h)=\Theta(\Psi,\r,x,\y).$$

This is the end of the proof of Theorem \ref{theo;thetalim}.

\part{Applications}

\section{Deconvolution formula for the Box spline with parameters}

As  we discuss in the introduction, we can apply Theorem \ref{deconvtrans} to invert the semi-discrete convolution by the Box spline with parameters.
We assume that $$\Phi=[\alpha_1,\alpha_2,\ldots,\alpha_N]$$ spans $V$  and is contained in a lattice $\Lambda$.

Let $f\in {\mathcal C}(\Lambda)$ be a function on $\Lambda$.
Then
$$P(f,\y)(v)=\sum_{\lambda\in \Lambda}f(\lambda) b(\Phi,\y)(v-\lambda)$$ is a piecewise analytic function on $V$.
  In other words, $P(f,\y)(v)dv$ is the convolution of the discrete measure $\sum_{\lambda\in \Lambda} f(\lambda)\delta_\lambda^V$ with the distribution with compact support $B(\Phi,\y)$.

When $\Phi$ is unimodular, the map $f\to P(f,\y)$ is injective, and Dahmen-Micchelli deconvolution formula computes the inverse map. We now give a general deconvolution formula for the semi-discrete convolution with the Box spline with parameters.

Let $\CV$ be the vertex set for the system $\Phi$.
Let $\partial_{\alpha_k}$ be the differentiation in the direction $\alpha_k$.
Let $s\in \CV$.
 We divide the list  $\y$ in sublists
$[\y_0,\y_1]$,   $\y_0$ corresponding to the indices $k$ with  $\alpha_k\in \Phi(s)$, and $\y_1$ to the indices not in $\Phi(s)$.

Consider the  locally analytic function
$$b(\Phi,s,\y)=\prod_{k, s^{\alpha_k}\neq 1} (s^{\alpha_k}e^{iy_k}\delta_{\alpha_k}^V-1)*b(\Phi(s),\y_0)$$
a sum of translated of the Box spline of the system
$\Phi(s).$

Then define the locally analytic function $P(s,\y,f)$  by
$$P(s,\y,f)(v)=\sum_{\xi\in \Lambda} s^{\xi} f(\xi)b(\Phi,s,\y)(v-\xi).$$

We can recover the value of $f$  at the point $\lambda\in \Lambda$ from the knowledge, in the neighborhood of $\lambda$, of the functions
$P(s,\y,f)(v)$, for all $s\in \CV$.

Define the series of differential operators
$$Todd([q],s,\y)(\partial)=\prod_{k, s^{\alpha_k}=1} \frac{q(-\partial_{\alpha_k} +iy_k)}{e^{[q](-\partial_{\alpha_k} +iy_k)}-1}
\prod_{k, s^{\alpha_k}\neq 1} \frac{1}{e^{[q](-\partial_{\alpha_k} +iy_k)}s^{\alpha_k}-1}.$$

\begin{theorem}\label{theoinv}
Let $f\in \mathcal C(\Lambda)$.
Let $\y$ small.
For $\epsilon$ generic, and $\lambda\in \Lambda$, the series
$$\lim_{t>0, t\to 0}(Todd([q],s,\y)(\partial)P(s,\y,f))(\lambda+t\epsilon)$$ is convergent at $q=1$.

Furthermore, for $\epsilon$ generic {\bf in the cone generated by} $\Phi$,
\begin{equation}\label{eqf}
f(\lambda)=\sum_{s\in \CV} s^{-\lambda} \lim_{t>0, t\to 0}(Todd([q],s,\y)(\partial)P(s,\y,f))(\lambda+t\epsilon)|_{q=1}.
\end{equation}
\end{theorem}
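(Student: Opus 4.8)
The plan is to deduce Theorem~\ref{theoinv} from Theorem~\ref{deconvtrans} by identifying the two sides via Fourier transform and recognising that the differential operator $Todd([q],s,\y)(\partial)$ is exactly the Fourier counterpart of the multiplier appearing in $F_s([q],x,\y)$. First I would unwind the definitions: $P(s,\y,f)(v)dv$ is the convolution of the discrete measure $\sum_{\xi}s^\xi f(\xi)\delta_\xi^V$ with the distribution $b(\Phi,s,\y)$, whose Fourier transform is computed by the product formula for $B(\Phi(s),\y_0)$ together with the translation factors $s^{\alpha_k}e^{iy_k}\delta_{\alpha_k}^V-1$, giving
$$\widehat{b(\Phi,s,\y)}(x)=\Bigl(\prod_{k,\,s^{\alpha_k}\neq 1}\bigl(s^{\alpha_k}e^{i(\ll\alpha_k,x\rr+y_k)}-1\bigr)\Bigr)\prod_{k,\,s^{\alpha_k}=1}\frac{e^{i(\ll\alpha_k,x\rr+y_k)}-1}{i(\ll\alpha_k,x\rr+y_k)}.$$
The key computation is then to check that $Todd([q],s,\y)(\partial)$ acts on the Fourier side as multiplication by the Taylor series in $q$ of
$$q^{|\Phi(s)|}\prod_{k,\,s^{\alpha_k}=1}\frac{i(\ll\alpha_k,x\rr+y_k)}{e^{iq(\ll\alpha_k,x\rr+y_k)}-1}\;\prod_{k,\,s^{\alpha_k}\neq 1}\frac{1}{e^{iq(\ll\alpha_k,x\rr+y_k)}s^{\alpha_k}-1},$$
because Fourier transform sends $\partial_{\alpha_k}$ to $-i\ll\alpha_k,x\rr$, hence $-\partial_{\alpha_k}+iy_k$ to $i(\ll\alpha_k,x\rr+y_k)$. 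Multiplying this multiplier by $\widehat{b(\Phi,s,\y)}(x)$ and tracking cancellations, the factors $(s^{\alpha_k}e^{i(\ll\alpha_k,x\rr+y_k)}-1)$ from the translates and $\frac{e^{i(\ll\alpha_k,x\rr+y_k)}-1}{i(\ll\alpha_k,x\rr+y_k)}$ from the box spline recombine to give exactly $F_s([q],x,\y)$ as defined before Theorem~\ref{deconvtrans}.

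Granting this identification, I would argue as follows. Since $\widehat{P(s,\y,f)}(x)=\Bigl(\sum_\xi s^\xi f(\xi)e^{i\ll x,\xi\rr}\Bigr)\widehat{b(\Phi,s,\y)}(x)$, applying $Todd([q],s,\y)(\partial)$ on the spatial side corresponds to multiplying the discrete Fourier sum by $F_s([q],x,\y)/\widehat{b(\Phi,s,\y)}(x)$, i.e.\ to taking the appropriate convolution; equivalently, $Todd([q],s,\y)(\partial)P(s,\y,f)$ is the convolution of the measure $\sum_\xi s^\xi f(\xi)\delta_\xi^V$ with the generalized function $m_s([q],\y)$ appearing in Theorem~\ref{deconvtrans}. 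Therefore
$$\bigl(Todd([q],s,\y)(\partial)P(s,\y,f)\bigr)(v)=\sum_{\xi\in\Lambda} s^\xi f(\xi)\, m_s([q],\y)(v-\xi).$$
Evaluating at $v=\lambda+t\epsilon$ with $\epsilon$ generic, setting $v-\xi=\lambda-\xi+t\epsilon$, and using that $m_s([q],\y)\in\CS^\y[[q]]$ with all but finitely many terms supported on walls (so the series in $q$ is a polynomial after applying $\lim_\epsilon^\Lambda$, and convergent at $q=1$ for $\y$ small by Theorem~\ref{deconvtrans}), I get
$$s^{-\lambda}\lim_{t>0,t\to 0}\bigl(Todd([q],s,\y)(\partial)P(s,\y,f)\bigr)(\lambda+t\epsilon)\Big|_{q=1}=\sum_{\xi\in\Lambda} s^{\xi-\lambda} f(\xi)\,\bigl(\lim_\epsilon^\Lambda m_s([q],\y)\big|_{q=1}\bigr)(\lambda-\xi).$$

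Finally I would sum over $s\in\CV$ and swap the (finite, by compact support on each side for fixed $\lambda$ after the wall-support reduction) order of summation to obtain
$$\sum_{s\in\CV}s^{-\lambda}\lim_{t\to 0}\bigl(Todd([q],s,\y)(\partial)P(s,\y,f)\bigr)(\lambda+t\epsilon)\big|_{q=1}=\sum_{\xi\in\Lambda}f(\xi)\sum_{s\in\CV}s^{\xi-\lambda}\bigl(\lim_\epsilon^\Lambda m_s([q],\y)\big|_{q=1}\bigr)(\lambda-\xi).$$
Here I invoke Theorem~\ref{deconvtrans}: for $\epsilon$ generic in $Cone(\Phi)$, the inner sum is the value at $\lambda-\xi$ of $\sum_s \hat s^{-1}\lim_\epsilon^\Lambda m_s([q],\y)|_{q=1}$ evaluated at the character-shift $\xi-\lambda$, which by \eqref{eqdeconv} equals $\delta_0^\Lambda$, i.e.\ it is $1$ when $\xi=\lambda$ and $0$ otherwise; the only subtlety is the bookkeeping of the character $s^{\xi-\lambda}$ versus the translate $\lambda-\xi$, which is handled by noting that $\hat s^{-1}$ applied to the function $\nu\mapsto m_s(\nu)$ and evaluation at $\lambda-\xi$ together reproduce precisely the pairing in \eqref{eqdeconv} with $\Lambda$-argument $\xi-\lambda$. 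Hence the right side collapses to $f(\lambda)$, proving \eqref{eqf}. \textbf{The main obstacle} I expect is the first step: verifying cleanly that the spatial operator $Todd([q],s,\y)(\partial)$ applied to $P(s,\y,f)$ really is the convolution with $m_s([q],\y)$, since this requires matching the Todd operator's symbol against $F_s([q],x,\y)$ while correctly accounting for the $q^{|\Phi(s)|}$ normalisation, the split of $\y$ into $\y_0,\y_1$, the extra $\delta$-factors built into $b(\Phi,s,\y)$, and the fact that these manipulations are only literally valid for $x$ small but extend to all $x$ once one clears denominators (as is done for Theorem~\ref{delicate}).
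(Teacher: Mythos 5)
Your proposal is correct and takes essentially the same approach as the paper: you identify, via the Fourier-transform calculation that recovers formula~(\ref{eq;Fandboxy}), that $Todd([q],s,\y)(\partial)P(s,\y,f)$ is the semi-discrete convolution of $\sum_\xi s^\xi f(\xi)\delta_\xi^V$ with $m_s([q],\y)$, and then invoke Theorem~\ref{deconvtrans}. The only cosmetic difference is that the paper reduces by linearity to $f=\delta_\kappa^\Lambda$ and uses the translation covariance $P(s,\y,\delta_\kappa^\Lambda)(v)=s^\kappa P(s,\y,\delta_0^\Lambda)(v-\kappa)$, whereas you keep $f$ general and exchange the (finite, by compact support of $m_s([q],\y)$) sums over $\xi$ and $s$ directly; the content is the same.
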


\begin{proof}

Let us see that this is just a reformulation of Theorem \ref{deconvtrans}.
By linearity, we need to prove the formula for
$\delta$ functions at any point of the lattice $\Lambda$.

We use the following formula.
\begin{equation}\label{eq;Fandboxy}
F_s(q,x,\y)=D_s(q,x,\y)
\left(\prod_{k,s^{\alpha_k}\neq 1}(e^{i(\langle \alpha_k,x\rangle+y_k)}s^{\alpha_k}-1)\right)\left(
\prod_{k,s^{\alpha_k}=1}
\frac{e^{i(\langle \alpha_k,x\rangle+y_k)}-1}{i(\langle \alpha_k,x\rangle+y_k)}
\right)
\end{equation}
where
$$D_s(q,x,\y)=
\left(\prod_{k,s^{\alpha_k}\neq 1}\frac{1}{e^{iq(\langle \alpha_k,x\rangle+y_k)}s^{\alpha_k}-1}\right)
\left(\prod_{k,s^{\alpha_k}=1}\frac{iq(\langle \alpha_k,x\rangle+y_k)}{e^{iq(\langle \alpha_k,x\rangle+y_k)}-1}\right).$$

 If $f_0=\delta_0^\Lambda$ is the delta function at $0$ on the lattice $\Lambda$, then taking Taylor expansions and Fourier transforms, we see that
$Todd([q],s,\y)(\partial)P(s,\y,f_0)$  is the series $m_s([q],\y)$, and the theorem is equivalent to Theorem \ref{deconvtrans}.

Otherwise,  for $f=\delta_\kappa^{\Lambda}$, we see that
$P(s,\y,f)(v)=s^{\kappa}P(s,\y,f_0)(v-\kappa)$.
Thus
$$\sum_{s\in \CV} s^{-\lambda} D([q],s,\y)P(s,\y,f)(\lambda+t\epsilon)=
\sum_{s\in \CV} s^{-(\lambda-\kappa)} D([q],s,\y)P(s,\y,f_0)(\lambda-\kappa+t\epsilon).$$
By the preceding computation, this is $f_0(\lambda-\kappa)=f(\lambda)$.
\end{proof}

We can as well obtain a deconvolution formula for the translated box spline.
Let $r\in Z(\Phi)$.
Let $f\in {\mathcal C}(\Lambda)$ be a function on $\Lambda$.
Then define
$$P(f,\r,\y)(v)=\sum_{\lambda\in \Lambda}f(\lambda) b_r(\Phi,\y)(v-\lambda).$$
Then $P(f,\r,\y)\in PW^{\omega}_r$.
Let $s\in \CV$. We divide our list $\r$ in sublists
  $[\r_0,\r_1]$  corresponding to the indices in $\Phi(s)$, and not in $\Phi(s)$.
Define
$$B(\Phi,s,\r,\y)(v)=e^{-i\ll \r_1,\y_1\rr}\delta_{-r_1}*\prod_{k, s^{\alpha_k}\neq 1} (s^{\alpha_k}e^{iy_k}\delta_{\alpha_k}^V-1)*B(\Phi(s),\r_0, \y_0)$$
a sum of translates of the Box spline of the system
$\Phi(\s)$.
Remark that  $B(\Phi,s,\r,\y)$ is supported on $Z(\Phi)-r$.

Then define the locally analytic function $P(s,\r,\y,f)$  by
$$P(s,\r,\y,f)(v)=\sum_{\xi\in \Lambda} s^{\xi} f(\xi)B(\Phi,s,\r,\y)(v-\xi).$$

Define the series of differential operators
$$Todd(\Phi,[q],s,\r,\y)(\partial)=e^{[q](-\partial_{r}+i\ll \y,\r\rr)} \prod_{k, s^{\alpha_k}=1} \frac{q(-\partial_{\alpha_k} +iy_k)}{e^{[q](-\partial_{\alpha_k} +iy_k)}-1}
\prod_{k, s^{\alpha_k}\neq 1} \frac{1}{e^{[q](-\partial_{\alpha_k} +iy_k)}s^{\alpha_k}-1}.$$

Then, the following theorem is just the reformulation of Theorem \ref{theo:deconvtrans}.

\begin{theorem}
Let $r$ be in the zonotope.
Let $f\in \mathcal C(\Lambda)$.
Let $\y$ small.
For $\epsilon$ generic, and $\lambda\in \Lambda$, the series
$$\lim_{t>0, t\to 0}(Todd(\Phi,[q],s,\r,\y)(\partial)P(s,\r,\y,f))(\lambda+t\epsilon)$$ is convergent at $q=1$.

Furthermore, for $\epsilon$ generic {\bf in the cone} $Cone(r,\Phi)$,
\begin{equation}\label{eqf}
f(\lambda)=\sum_{s\in \CV} s^{-\lambda} \lim_{t>0, t\to 0}(Todd([q],s,\r,\y)(\partial)P(s,\r,\y,f))(\lambda+t\epsilon)|_{q=1}.
\end{equation}
\end{theorem}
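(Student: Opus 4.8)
The plan is to observe that this final statement is, as announced, just a dictionary translation of Theorem~\ref{theo:deconvtrans}, obtained by following verbatim the scheme used to deduce Theorem~\ref{theoinv} from Theorem~\ref{deconvtrans}. By linearity of $f\mapsto P(s,\r,\y,f)$ and of the sum over $\CV$, one reduces to $f=\delta^\Lambda_\kappa$, and then by translation covariance (below) to $\kappa=0$.

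First I would treat $f_0=\delta^\Lambda_0$. Here $P(s,\r,\y,f_0)(v)=B(\Phi,s,\r,\y)(v)$, a finite combination of translates of $B(\Phi(s),\r_0,\y_0)$. I would compute its Fourier transform explicitly: the factor $\prod_{k,\,s^{\alpha_k}\neq 1}(s^{\alpha_k}e^{iy_k}\delta^V_{\alpha_k}-1)$ contributes $\prod_{k,\,s^{\alpha_k}\neq 1}(e^{i(\langle\alpha_k,x\rangle+y_k)}s^{\alpha_k}-1)$, the convolution by $\delta_{-r_1}$ together with the normalization $e^{-i\langle\r_1,\y_1\rangle}$ produces the prefactor $e^{-i(\langle r,x\rangle+\langle\r,\y\rangle)}$ relative to the untranslated case, and $B(\Phi(s),\r_0,\y_0)$ contributes the Fourier transform of the parametrized translated box spline of $\Phi(s)$. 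Then I would apply the constant coefficient series $Todd(\Phi,[q],s,\r,\y)(\partial)$: on the Fourier side $-\partial_{\alpha_k}$ becomes $i\langle\alpha_k,x\rangle$ and $-\partial_r$ becomes $i\langle r,x\rangle$, so the operator acts by multiplication by the formal $q$-series
$$e^{i([q]-1)(\langle r,x\rangle+\langle\r,\y\rangle)}\,\prod_{k,\,s^{\alpha_k}=1}\frac{q\,(i(\langle\alpha_k,x\rangle+y_k))}{e^{i[q](\langle\alpha_k,x\rangle+y_k)}-1}\,\prod_{k,\,s^{\alpha_k}\neq 1}\frac{1}{e^{i[q](\langle\alpha_k,x\rangle+y_k)}s^{\alpha_k}-1},$$
where the $e^{i([q]-1)\langle r,x\rangle}$ comes precisely from pairing $e^{-[q]\partial_r}$ against the prefactor $e^{-i\langle r,x\rangle}$. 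Multiplying this against the Fourier transform of $P(s,\r,\y,f_0)$ computed above, and using $|\Phi(s)|=|\Phi(\overline{s})|$, the result is exactly $F_s([q],\r,x,\y)$ as in formula~(\ref{eq;Fandboxy}) modified by the $\r$-prefactor; hence by inverse Fourier transform $Todd(\Phi,[q],s,\r,\y)(\partial)P(s,\r,\y,f_0)=m_s([q],\r,\y)$ as a series in $\CS_r^{\y}[[q]]$. Convergence of $\lim_{\epsilon}^\Lambda$ at $q=1$ and the identity $\sum_{s\in\CV}s^{-\lambda}\lim_\epsilon^\Lambda m_s([q],\r,\y)|_{q=1}=\delta^\Lambda_0$ (for $r$ in the zonotope and $\epsilon$ generic in $Cone(r,\Phi)$) are then literally Theorem~\ref{theo:deconvtrans}, once one recalls that $\lim_\epsilon^\Lambda m_s$ evaluated at $\lambda$ is $(\lim_\epsilon m_s)(\lambda)=\lim_{t>0,t\to 0}m_s(\lambda+t\epsilon)$.

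For general $f=\delta^\Lambda_\kappa$, I would use that $B(\Phi,s,\r,\y)(v-\xi)$ depends only on $v-\xi$, so $P(s,\r,\y,\delta^\Lambda_\kappa)(v)=s^{\kappa}B(\Phi,s,\r,\y)(v-\kappa)=s^{\kappa}P(s,\r,\y,f_0)(v-\kappa)$. Since $Todd(\Phi,[q],s,\r,\y)(\partial)$ has constant coefficients it commutes with the translation $v\mapsto v-\kappa$, hence
$$\sum_{s\in\CV}s^{-\lambda}\lim_{t\to 0^+}\!\bigl(Todd(\Phi,[q],s,\r,\y)(\partial)P(s,\r,\y,f)\bigr)(\lambda+t\epsilon)=\sum_{s\in\CV}s^{-(\lambda-\kappa)}\lim_{t\to 0^+}\!\bigl(Todd(\Phi,[q],s,\r,\y)(\partial)P(s,\r,\y,f_0)\bigr)\bigl((\lambda-\kappa)+t\epsilon\bigr),$$
and at $q=1$ this is $\delta^\Lambda_0(\lambda-\kappa)=f(\lambda)$ by the case already settled; note $\lambda-\kappa\in\Lambda$, so the hypothesis that $\epsilon$ be generic in $Cone(r,\Phi)$ is the relevant one.

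The main obstacle is the bookkeeping in the first step: one must verify carefully that the combinatorial factor $e^{[q](-\partial_r+i\langle\y,\r\rangle)}$ in the definition of the Todd operator transforms, after Fourier transform, into exactly the factor $e^{i([q]-1)(\langle r,x\rangle+\langle\r,\y\rangle)}$ occurring in $F_s(q,\r,x,\y)$. This is precisely the subtle point flagged in Remark~\ref{remind}: the operator $e^{i([q]-1)\langle r,x\rangle}$ is \emph{not} the identity at $q=1$ on $PW^{\omega}_r$, since it amounts to re-expanding a merely piecewise analytic function around a point of $V_{reg,r}$, and one must track how the translation by $-r_1$ hidden inside $B(\Phi,s,\r,\y)$ interacts with it. Everything else is the routine translation-to-Fourier-side dictionary already exploited for Theorem~\ref{theoinv}.
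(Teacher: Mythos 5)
Your proof is correct and follows essentially the same approach as the paper, which treats this theorem as a direct reformulation of Theorem~\ref{theo:deconvtrans} obtained by mimicking the proof of Theorem~\ref{theoinv}: reduce by linearity and translation covariance to $f_0=\delta^\Lambda_0$, check on the Fourier side via the $\r$-modified form of formula~(\ref{eq;Fandboxy}) (cf.\ Remark~\ref{remind}) that $Todd(\Phi,[q],s,\r,\y)(\partial)P(s,\r,\y,f_0)=m_s([q],\r,\y)$, and then invoke Theorem~\ref{theo:deconvtrans}. Your explicit tracking of the $e^{i([q]-1)(\langle r,x\rangle+\langle\r,\y\rangle)}$ prefactor is exactly the bookkeeping the paper leaves implicit.
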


In particular, if $r=\rho$ is the center of the zontope, we can take limits in any directions, as $C(\rho,\Phi)=V$.
This will be important to define multiplicities formulae for the Dirac operators.

We reformulate the deconvolution formulae using the local pieces of the box spline.
Let $\c$ be an alcove. Consider
$\Delta=(\c-Z(\Phi))\cap \Lambda.$

For any  $f\in {\mathcal C}(\Lambda)$, we can
reconstruct $f$ on $\Delta$, using the functions $P(s,\y,f).$
Let  $P(s,\y,f,\c)$ be the analytic function on $V$, coinciding with   $P(s,\y,f)$  on $\c$.

\begin{theorem}\label{theorecons}
Let $\c$ be an alcove and let
$\lambda\in (\c-Z(\Phi))\cap \Lambda$,
then
$$f(\lambda)=\sum_{s\in \CV} s^{-\lambda} (Todd([q],s,\y)(\partial)P(s,\y,f,\c))(\lambda)|_{q=1}.$$
\end{theorem}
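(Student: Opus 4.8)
The plan is to reduce to $f=\delta_0^\Lambda$ by linearity and a lattice translation, to recognize the right‑hand side as built from the local pieces $m_s^{\c}$ of Remark~\ref{remind}, and then to read the statement off from Theorem~\ref{theo:deconvtrans} applied with a translation parameter interior to the zonotope.

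Both sides being linear in $f$, it suffices to treat $f=\delta_\kappa^\Lambda$ for $\kappa\in\Lambda$. From $P(s,\y,\delta_\kappa^\Lambda)(v)=s^\kappa P(s,\y,\delta_0^\Lambda)(v-\kappa)$, the fact that $\c-\kappa$ is again an alcove, and the constancy of the coefficients of $Todd([q],s,\y)(\partial)$ (so that it commutes with translation by $\kappa$), one gets $P(s,\y,\delta_\kappa^\Lambda,\c)(v)=s^\kappa P(s,\y,\delta_0^\Lambda,\c-\kappa)(v-\kappa)$ and the asserted identity for $(\lambda,\c,\delta_\kappa^\Lambda)$ becomes the one for $(\lambda-\kappa,\c-\kappa,\delta_0^\Lambda)$; since $\lambda\in\c-Z(\Phi)$ iff $\lambda-\kappa\in(\c-\kappa)-Z(\Phi)$, this reduces us to $f=\delta_0^\Lambda$. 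Next I would recall from the proof of Theorem~\ref{theoinv} that $Todd([q],s,\y)(\partial)P(s,\y,\delta_0^\Lambda)$ is the series $m_s([q],\y)$. As constant‑coefficient differential operators are local and $P(s,\y,\delta_0^\Lambda)$ agrees on the open alcove $\c$ with the globally analytic function $P(s,\y,\delta_0^\Lambda,\c)$, the analytic continuation from $\c$ of each $q^a$‑coefficient of $m_s([q],\y)$ is obtained by applying the corresponding operator to $P(s,\y,\delta_0^\Lambda,\c)$; hence, in the notation of Remark~\ref{remind}, $m_s^{\c}([q],\y)=Todd([q],s,\y)(\partial)P(s,\y,\delta_0^\Lambda,\c)$, which for $\y$ small converges at $q=1$ to $m_s^{\c}([1],\y)$. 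So it remains to prove
\[
\sum_{s\in\CV}s^{-\lambda}\,m_s^{\c}([1],\y)(\lambda)=\delta_0^\Lambda(\lambda)\qquad\text{for }\lambda\in(\c-Z(\Phi))\cap\Lambda .
\]

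To obtain this from Theorem~\ref{theo:deconvtrans}, I would first note that $\lambda\in\c-Z(\Phi)$ together with $\lambda\in\Lambda$ forces $\c\subseteq\mathrm{int}(\lambda+Z(\Phi))$: the boundary of the zonotope $\lambda+Z(\Phi)$ lies in finitely many affine walls, while the connected alcove $\c$ meets no affine wall and does meet $\lambda+Z(\Phi)$. Pick $p\in\c$ and set $r=p-\lambda\in\mathrm{int}(Z(\Phi))$, with a $\Phi$‑representation $\r$, $0\le r_k\le1$. Because $r$ is interior, $Cone(r,\Phi)=V$, so Theorem~\ref{theo:deconvtrans} applies for every generic $\epsilon$ and yields $\sum_{s\in\CV}\hat s^{-1}\lim_\epsilon^\Lambda m_s([q],\r,\y)|_{q=1}=\delta_0^\Lambda$. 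Evaluating at $\lambda$: for small $t>0$ the point $\lambda+t\epsilon$ lies in the translated alcove $\c-r$, since $\lambda+r+t\epsilon\to p\in\c$; and by Remark~\ref{remind} the series $m_s([q],\r,\y)$ agrees on $\c-r$ with $e^{i([q]-1)\ll\r,\y\rr}m_s^{\c}([q],\y)$, whose prefactor is $1$ at $q=1$. Hence $\lim_\epsilon^\Lambda m_s([q],\r,\y)(\lambda)|_{q=1}=m_s^{\c}([1],\y)(\lambda)$, and evaluating (\ref{eqdeconvr}) at $\lambda$ gives exactly the displayed identity.

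I expect the only genuinely delicate step to be this last one: the entire force of the hypothesis $\lambda\in\c-Z(\Phi)$ is that it lets one slide the box spline by an \emph{interior} point $r$ of $Z(\Phi)$ with $\lambda+r\in\c$, which simultaneously makes the tangent‑cone condition $\epsilon\in Cone(r,\Phi)$ of Theorem~\ref{theo:deconvtrans} vacuous and pins the limiting alcove down to $\c$ itself. Everything else — the reduction to $\delta_0^\Lambda$, the identification of $Todd([q],s,\y)(\partial)$ applied to the analytic piece with $m_s^{\c}$, and the disappearance of the exponential prefactor at $q=1$ — is formal bookkeeping using the covariances of Remark~\ref{remind} and the locality of constant‑coefficient operators.
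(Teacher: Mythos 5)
Your proof is correct and follows essentially the same route as the paper's: both reduce the statement to Theorem~\ref{theo:deconvtrans} by choosing $r\in Z(\Phi)$ with $\lambda+r\in\c$, so that $\lambda$ lies in the translated alcove $\c-r$ and the limit in the deconvolution formula becomes evaluation of the analytic piece at $\lambda$. The one genuine addition you make — observing that $\lambda\in(\c-Z(\Phi))\cap\Lambda$ forces $\c\subseteq\mathrm{int}(\lambda+Z(\Phi))$, so $r$ may be chosen interior and $Cone(r,\Phi)=V$, rendering the direction constraint on $\epsilon$ vacuous — is a correct and clarifying refinement, but it is not necessary: the paper simply takes any $r\in Z(\Phi)$ with $\lambda+r\in\c$ and any generic $\epsilon\in Cone(r,\Phi)$ (which is nonempty and full-dimensional), and the conclusion is unaffected because $\lambda$ is already regular for $V_{reg,r}$ so the one-sided limit is just evaluation. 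The remaining bookkeeping (reduction to $\delta_0^\Lambda$ by translation covariance, identification of $Todd([q],s,\y)(\partial)P(s,\y,\delta_0^\Lambda,\c)$ with $m_s^{\c}([q],\y)$ via locality of constant-coefficient operators and Remark~\ref{remind}, and the disappearance of the prefactor $e^{i([q]-1)\ll\r,\y\rr}$ at $q=1$) matches the paper's use of the relation $Todd([q],s,\r,\y)(\partial)=e^{i[q]\ll\r,\y\rr}e^{-[q]\partial_r}Todd([q],s,\y)(\partial)$.
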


\begin{proof}
Let $\lambda\in (\c-Z(\Phi))\cap \Lambda$.
We choose $r\in Z(\Phi)$ such that $\lambda+r\in \c$.
 Thus  $\lambda\in V_{reg,r}$ and belongs to the translated alcove $\c-r$. Thus
 function $P(s,\y,\r,f)$ near $\lambda$   is just $P(s,\y,f,\c)(v+r)$.
 The deconvolution formula for the translated Box spline asserts that
$$f(\lambda)=
\sum_{s\in \CV} s^{-\lambda} (Todd([q],s,\r,\y)(\partial)P(s,\y,\r,f))(\lambda)|_{q=1}.$$
As $(Todd([q],s,\r,\y)(\partial)=e^{\ll \r,\y\rr}e^{-[q]\partial_r}(Todd([q],s,\y)(\partial)$, we obtain our formula.

\end{proof}

This theorem shows that if $\tau$ is a union of alcoves $\c_i$, such that for any $s\in \hat\CV$ the analytic function  $P(s,\y,f,\c_i)$  coincides, we obtain a reconstruction formula for $f$ on $\tau-Z(\Phi)$.
In the next section, we apply this to Kostant partition function with parameters.

We can use this theorem for $\y=0$ and $f_0=\delta_0^{\Lambda}$.
We use notations of \cite{dpv1}.
The Dahmen-Micchelli space $DM(\Phi)$ is a space of $\Z$-valued functions on $\Lambda$ satisfying some difference equations.
Notations and definitions are as in \cite{dpv1}.

It is  possible to define a space $DM(\Phi,\y)$
 with value in the ring $\Z[e^{iy_1},e^{iy_2},\cdots, e^{iy_N}]$, consisting of the functions $f$   satisfying the equation
$\prod_{\alpha_k\in C} (\nabla_k-e^{iy_k})f=0$ for all cocircuits
and to compute  the structure of $DM(\Phi,\y)$.
However, we do not undertake this task for the moment.
 We just take $\y=0$, and relate our theorem  on translated Box splines to results of \cite{dpv1}.

We return to the notations of Subsection \ref{subinv}.
We denote our series $m_s([q],0)$ simply by $m_s([q])$.
Let $\c$ an alcove contained in $Z(\Phi)$.
Consider the polynomial function $m_s(\c)$ on  $V$ such that $m_s([1])$ coincide with $m_s(\c)$ on $\c$. It is a function in the Dahmen-Micchelli space  of polynomials $D(\Phi(s))$.
The restriction of $m_s(\c)$ to $\Lambda$ is a polynomial function on $\Lambda$.
Define $Q(\c)$ to be the quasi polynomial on $\Lambda$:
$$Q(\c)=\sum_{s\in \CV}{\hat s}^{-1}m_s(\c)|_\Lambda.$$
Then $Q(\c)$ belongs to $DM(\Phi)$.
Theorem  \ref{theorecons} for $f=\delta_0^{\Lambda}$  and $\y=0$ gives the following result, proved in \cite{dpv1}.

\begin{theorem}\label{theosameasdpv}
$Q(\c)$ is the unique Dahmen-Micchelli quasipolynomial such that
$Q(\c)(\nu)=1$ if $\nu=0$, and
$Q(\c)(\nu)=0$ if $\nu\in (\c-Z(\Phi))\cap \Lambda.$
\end{theorem}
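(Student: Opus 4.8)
The plan is to prove the two assertions separately: that $Q(\c)$ takes the prescribed values follows directly from Theorem \ref{theorecons}, and uniqueness follows from the structure of the Dahmen--Micchelli space established in \cite{dpv1}. That $Q(\c)\in DM(\Phi)$ has already been recorded just before the statement, so I take it as given.

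First I would specialize Theorem \ref{theorecons} to $f=f_0=\delta^{\Lambda}_0$ and $\y=0$. The one point needing care is to identify, for each $s\in\CV$, the polynomial $(Todd([q],s,0)(\partial)P(s,0,f_0,\c))|_{q=1}$ with $m_s(\c)$. Indeed, by the Fourier-transform computation in the proof of Theorem \ref{theoinv}, the series $Todd([q],s,0)(\partial)P(s,0,f_0)$ equals $m_s([q])$; since $P(s,0,f_0,\c)$ is a genuine polynomial on all of $V$ that agrees with $P(s,0,f_0)$ on $\c$, and each coefficient of $q^a$ in $Todd([q],s,0)(\partial)$ is a homogeneous constant-coefficient differential operator of order $a$, applying the series to $P(s,0,f_0,\c)$ produces a polynomial-valued series in $q$ which terminates, whose value at $q=1$ is a polynomial on $V$ coinciding with $m_s([1])$ on $\c$, hence (two polynomials agreeing on an open set) equal to $m_s(\c)$. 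Feeding this into Theorem \ref{theorecons} gives, for every $\lambda\in(\c-Z(\Phi))\cap\Lambda$,
$$\delta^{\Lambda}_0(\lambda)=\sum_{s\in\CV}s^{-\lambda}m_s(\c)(\lambda)=Q(\c)(\lambda).$$
Since $\c\subset Z(\Phi)$, any $v_0\in\c$ gives $0=v_0-v_0\in\c-Z(\Phi)$, so $0\in(\c-Z(\Phi))\cap\Lambda$; hence $Q(\c)(0)=1$ and $Q(\c)(\nu)=0$ for all $\nu\in(\c-Z(\Phi))\cap\Lambda$ with $\nu\neq0$. This is the first assertion.

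For uniqueness I would invoke the basic sampling property of the Dahmen--Micchelli space from \cite{dpv1}: the restriction map $DM(\Phi)\to\C^{(\c-Z(\Phi))\cap\Lambda}$ is injective (equivalently $\dim DM(\Phi)=\#((\c-Z(\Phi))\cap\Lambda)$, the open set $\c-Z(\Phi)$ behaving as a half-open region that selects the correct number of lattice points). Granting this, any two Dahmen--Micchelli quasipolynomials agreeing on $(\c-Z(\Phi))\cap\Lambda$ differ by an element of $DM(\Phi)$ vanishing there, hence coincide.

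The main obstacle is precisely this last input: the injectivity (dimension) statement for $DM(\Phi)$ is the genuine Dahmen--Micchelli theorem and is not reproved here, so I would simply cite \cite{dpv1} (or De Concini--Procesi). A smaller but real point in the first step is to verify that $P(s,0,f_0,\c)$ is polynomial on all of $V$, so that the infinite Todd series acts termwise and its value at $q=1$ is unambiguous; this is exactly where the ``alcove'' hypothesis on $\c$ is used.
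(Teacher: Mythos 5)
Your argument is essentially identical to what the paper intends: the paper justifies this theorem only by the sentence that Theorem \ref{theorecons} with $f=\delta_0^{\Lambda}$ and $\y=0$ gives the result (citing \cite{dpv1}), and you have simply unpacked this by (i) identifying $(Todd([q],s,0)(\partial)P(s,0,f_0,\c))|_{q=1}$ with $m_s(\c)$ using locality of constant-coefficient differential operators and the polynomiality of $P(s,0,f_0,\c)$, (ii) reading off the prescribed values on $(\c-Z(\Phi))\cap\Lambda$, and (iii) importing the injectivity of restriction of $DM(\Phi)$ to $(\c-Z(\Phi))\cap\Lambda$ from \cite{dpv1} for uniqueness. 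Every step you supply is exactly what the paper leaves implicit, and the one genuine external input (the dimension/sampling fact for $DM(\Phi)$) is correctly isolated and attributed.
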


\section{Kostant Partition functions with parameters}
Let $\Phi$ be a series of non zero elements of a lattice $\Lambda\subset V$, and assume that $\Phi$ generates a salient cone.

Consider the group $T$ with character group $\Lambda$.
Let $M_\Phi=\oplus_{k=1}^N L_{\alpha_k}$ the linear representation space of $T$.
Here $L_{\alpha_k}=\C e_{\alpha_k}$, and   $t e_{\alpha_k}=t^{\alpha_k} e_{\alpha_k}$.
Consider the action of the diagonal group  $D_N=\{e^{i\y}=[e^{i y_1},e^{iy_2},\ldots,e^{iy_N}]\}$ on $M_\Phi$ acting by $e^{iy_k}$ on $L_{\alpha_k}$.
Consider the symmetric algebra  $S(M_\Phi)$ of $M_\Phi$. It decomposes as a $T$-module as
$$S(M_\Phi)=\oplus_{\nu\in \Lambda }S_\nu.$$
The space $S_\nu$ is finite dimensional.
The group $D_N$ acts on $S_\nu$, and

$$f(\y)(\nu)= {\rm Trace}_{S_\nu}(e^{i\y})$$ is a function on $\Lambda$.

If $\y=0$, then $f(0)(\nu)=\dim S_\nu$ is the value of the partition function at $\nu$, that is the
 cardinal of the set $P(\Phi,\nu)$  of sequence $\p=[p_1,p_2,\ldots, p_N]$ of non negative integers $p_k$ such that $\sum_{k=1}^N p_k \alpha_k=\nu$:
$$P(\Phi,\nu)=\{\p\geq 0,\sum_{k=1}^N p_k \alpha_k=\nu\}.$$

For any $\y$, we have the formula
$${\rm Trace}_{S_\nu}(e^{i\y})\sum_{\p\in P(\Phi,\nu)}e^{i p_ky_k}.$$

 Now consider the following multispline distributions $T(\Phi,\y)$  on $V$ such that, for a  continuous function  $F$ on $V$,
$$
\langle T(\Phi,\y),F\rangle=
\int_{0}^{\infty}\cdots \int_{0}^{\infty} e^{i(\sum_{k=1}^N t_k y_k)}
F(\sum_{k=1}^{N} t_k\alpha_k) dt_1\cdots dt_N.
$$

Consider the  cones generated by subsets of $\Phi$, and  consider $V_{reg,plus}$ as the complement of the union of the boundaries of these cones. A chamber $\tau$ is defined as a connected component of $V_{reg,plus}$.
Then it is easy to see that for each chamber $\tau$, there exists an analytic function  $T^{\tau}(\Phi,\y)(v) $ of $(v,\y)$ so that
 $T(\Phi,\y)(v)=T^{\tau}(\Phi,\y)(v)$ when $v\in \tau$.

Writing the quadrant $\R_+^N$ as the union of the translates of the hypercube $\{t_k, 0\leq t_k\leq 1\}$,
we can write $T(\Phi,\y)$ as a sum of translated of $B(\Phi,\y)$.

\begin{equation}\label{eq;partition}
T(\Phi,\y)=\sum_{p_1=0}^{\infty} \cdots \sum_{p_N=0}^{\infty} e^{i(\sum_{k=1}^N p_k y_k)} B(\Phi,\y)(v-\sum_{k=1}^N p_k \alpha_k).
\end{equation}

Consider the zonotope $Z(\Phi)=\sum_{k=1}^N [0,1]\alpha_k$   generated by $\Phi$.

The following formula was proved in Brion-Vergne, via cone decompositions. We gave also a proof in Szenes-Vergne, via residues. Here is yet another proof.

\begin{theorem}
Let $\tau$ be a chamber, and $\y$ small.
Then
for $\nu\in (\tau-Z(\Phi))\cap \Lambda$,
$${\rm Trace}_{S(\nu)}e^{i\y}=\sum_{s\in \CV} s^{-\nu} (Todd([q],\y, s)(\partial) T^{\tau}(\Phi,\y))(\nu)|_{q=1}.$$
\end{theorem}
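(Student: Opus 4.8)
The plan is to reduce this statement to the deconvolution theorem for the translated box spline, exactly as Theorems~\ref{theoinv} and~\ref{theorecons} do in the compactly supported case, but now tracking the fact that $T(\Phi,\y)$ is an infinite (locally finite, since $\Phi$ generates a salient cone) sum of translates of $B(\Phi,\y)$. First I would apply the convolution identity~(\ref{eq;partition}): the function $P(f,\y)$ built from $f(\y)(\nu)=\mathrm{Trace}_{S_\nu}(e^{i\y})$ is precisely $T(\Phi,\y)$, because $T(\Phi,\y)$ is the convolution of the discrete measure $\sum_{\nu\in\Lambda} f(\y)(\nu)\delta_\nu^V$ with $B(\Phi,\y)$. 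Indeed, expanding the quadrant $\R_+^N$ as a disjoint union of unit hypercubes, the coefficient of $B(\Phi,\y)(v-\sum p_k\alpha_k)$ is $e^{i\sum p_k y_k}$, and collecting translates by a fixed lattice point $\nu=\sum p_k\alpha_k$ produces exactly $\sum_{\p\in P(\Phi,\nu)}e^{i\sum p_k y_k}=\mathrm{Trace}_{S_\nu}(e^{i\y})$. So $T(\Phi,\y)=P(f(\y),\y)$, and the associated $P(s,\y,f(\y))$ of Theorem~\ref{theoinv} is well defined.

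Second, I would invoke Theorem~\ref{theoinv} (equivalently Theorem~\ref{theorecons}) to reconstruct $f(\y)$ from the local pieces of $T(\Phi,\y)$. The key point is that on any chamber $\tau$ (in the cone-arrangement sense), $T(\Phi,\y)$ agrees with a single analytic function $T^{\tau}(\Phi,\y)$, hence so does each $P(s,\y,f(\y))$ after the appropriate twist; and for $\nu\in(\tau-Z(\Phi))\cap\Lambda$ one can pick $r\in Z(\Phi)$ with $\nu+r$ in a fixed alcove $\c\subset\tau$, so that the translated-box-spline deconvolution formula applies with the cone condition $\epsilon\in Cone(r,\Phi)$ automatically satisfiable. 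Then Theorem~\ref{theorecons}, with $T^{\tau}$ playing the role of $P(s,\y,f,\c)$, gives
\[
f(\y)(\nu)=\sum_{s\in\CV}s^{-\nu}\,(Todd([q],\y,s)(\partial)\,T^{\tau}(\Phi,\y))(\nu)|_{q=1},
\]
which is the claimed identity.

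The one genuine subtlety — and the step I expect to be the main obstacle — is justifying that the deconvolution machinery, which in Theorem~\ref{theoinv} was stated for $f\in\mathcal C(\Lambda)$ with $P(f,\y)$ a \emph{locally finite} sum, legitimately applies here: since $\Phi$ generates a salient cone, every alcove meets only finitely many translates $Z(\Phi)-\lambda$ with $f(\y)(\lambda)\neq 0$, so $P(f(\y),\y)=T(\Phi,\y)$ is genuinely a locally finite sum and defines an element of $PW^\omega$ (indeed a piecewise exponential-polynomial function), and all the operations in Theorem~\ref{theoinv} (applying constant-coefficient differential operators, taking $\lim_\epsilon^\Lambda$, evaluating the $q$-series at $q=1$) are local and hence unaffected by the infinitude of the sum. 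I would also need to check that the chamber decomposition refines (on the relevant region) the alcove decomposition used in Theorem~\ref{theorecons}, so that "coinciding on a chamber $\tau$" implies the hypothesis "the analytic pieces $P(s,\y,f,\c_i)$ coincide for all alcoves $\c_i\subset\tau$"; this is exactly the remark following Theorem~\ref{theorecons}, applied to $f=f(\y)$. Once these points are in place, the statement is a direct specialization, with no further computation required.
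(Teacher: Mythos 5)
Your overall strategy is the paper's: rewrite $T(\Phi,\y)$ as the semi-discrete convolution $P(f(\y),\y)$ via Formula~(\ref{eq;partition}), then invoke Theorem~\ref{theoinv} (through the remark following Theorem~\ref{theorecons}) to reconstruct $f(\y)$ on $\tau-Z(\Phi)$ once the relevant analytic pieces coincide on the chamber $\tau$. You also correctly flag the salience of $Cone(\Phi)$ as the reason the infinite sum is locally finite.

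The gap is in the sentence ``\emph{$T(\Phi,\y)$ agrees with a single analytic function $T^{\tau}(\Phi,\y)$, hence so does each $P(s,\y,f(\y))$ after the appropriate twist}.'' This implication does not follow formally: $P(s,\y,f)$ is a convolution of $\hat s f$ with a \emph{different} kernel $b(\Phi,s,\y)$, not a twist of $T(\Phi,\y)=P(f,\y)$. The paper's proof supplies exactly what is missing here: it identifies $P(s,\y,f)$ with the partition function $T(\Phi(s),\y_0)$ for the subsystem $\Phi(s)$ (equivalently, $P(s,\y,f)=\prod_{k,\,s^{\alpha_k}\neq 1}(-\partial_{\alpha_k}+iy_k)\,T(\Phi,\y)$ at the level of Fourier transforms), and then observes that the $\Phi$-chamber $\tau$ sits inside a single $\Phi(s)$-chamber, so $T(\Phi(s),\y_0)$ is analytic on $\tau$. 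Without this identification, the hypothesis of the remark after Theorem~\ref{theorecons}—that the pieces $P(s,\y,f,\c_i)$ agree for all alcoves $\c_i\subset\tau$—is simply not established. Related to this, your statement that ``$T^\tau$ plays the role of $P(s,\y,f,\c)$'' is not quite right: $P(s,\y,f,\c)$ is the local analytic piece of $T(\Phi(s),\y_0)$, not of $T(\Phi,\y)$; the two are related by the constant-coefficient operator above, which is what lets the final formula be written in terms of $T^{\tau}(\Phi,\y)$ alone. Filling in these two points turns your outline into the paper's proof.
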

Thus the function
${\rm Trace}_{S(\nu)}e^{i\y}$ is given by an analytic function of $\nu,\y$ on each enlarged chamber $\tau-Z(\Phi)$.

\begin{proof}
This is a direct consequence of Theorem \ref{theoinv}.
Indeed, we just need to verify that the functions
$P(s,f,\y)$ coincide with an analytic function on $\tau$.
But we see that
$P(s,f,\y)$ is just equal to $T(\Phi(s),\y_0)$. The chamber $\tau$ for $\Phi$ is smaller than the chamber  for $\Phi(s)$ conaining $\tau$, so
$T(\Phi(s),\y_0)$ is analytic on $\tau$.
\end{proof}

\end{document}